\definecolor{dullmagenta}{rgb}{0.4,0,0.4}   
\definecolor{darkblue}{rgb}{0,0,0.4}
\newtheorem{theorem}{Theorem}[section]
\newtheorem{proposition}[theorem]{Proposition}
\newtheorem{lemma}[theorem]{Lemma}
\newtheorem{definition}[theorem]{Definition}
\newcommand{\Pain}[1]{\text{P}_{\mathrm{#1}}}
\newcommand{\dPain}[1]{\text{P}\left(\mathrm{#1}\right)}
\newcommand{\C}{\mathbb{C}}
\newcommand{\Z}{\mathbb{Z}}
\newcommand{\p}{\mathbb{P}}
\newcommand{\X}{\mathcal{X}}
\newcommand{\h}{\mathcal{H}}
\newcommand{\E}{\mathcal{E}}
\newcommand{\F}{\mathcal{F}}
\newcommand{\Pic}{\operatorname{Pic}}
\newcommand{\Div}{\operatorname{Div}}
\newcommand{\Cl}{\operatorname{Cl}}
\newcommand{\pain}[1]{\text{P}_{\mathrm{#1}}}
\theoremstyle{definition}
\theoremstyle{remark}
\newtheorem{remark}[theorem]{Remark}
\numberwithin{equation}{section}
\begin{document}

{\noindent\Large\bf Differential equations for the recurrence coefficients of semi-classical orthogonal polynomials and their relation to the Painlev\'e equations via the geometric approach 
}
\medskip
\begin{flushleft}

\textbf{Anton Dzhamay}\\
School of Mathematical Sciences, The University of Northern Colorado, Greeley, CO 80526, USA\\
E-mail: \href{mailto:adzham@unco.edu}{\texttt{adzham@unco.edu}}\\[5pt]

\textbf{Galina Filipuk}\\
Faculty of Mathematics, Informatics and Mechanics, University of Warsaw, Banacha 2, Warsaw, 02--097, Poland\\
E-mail: \href{mailto:filipuk@mimuw.edu.pl}{\texttt{filipuk@mimuw.edu.pl}}\\[5pt] 

\textbf{Alexander Stokes}\\
Graduate School of Mathematical Sciences, The University of Tokyo, 3--8--1 Komaba Meguro-ku Tokyo, 153--8914, Japan\\
E-mail: \href{mailto:alexander.stokes.14@ucl.ac.uk}{\texttt{alexander.stokes.14@ucl.ac.uk}}\\[8pt]

\emph{Keywords}: orthogonal polynomials, recurrence coefficients, Painlev\'e equations, space of initial conditions.
\\[3pt]

\emph{MSC2010}: 33D45, 
34M55, 
14J26,

\end{flushleft}

\date{\today}

\begin{abstract}

In this paper we present a general scheme for how to relate differential equations for the recurrence coefficients of semi-classical orthogonal polynomials to the Painlev\'e equations using the geometric framework of the 
Okamoto Space of Initial Conditions.
We demonstrate this procedure in two examples. 
For semi-classical Laguerre polynomials appearing in \cite{HC17},  we show how the recurrence coefficients are connected to the fourth Painlev\'e equation. 
For discrete orthogonal polynomials associated with the hypergeometric weight appearing in \cite{FVA18} we  discuss the relation of the recurrence coefficients to the sixth Painlev\'e equation, 
extending the results of \cite{DFS19}, where a similar approach was used for a discrete system for the same recurrence coefficients. Though the discrete and differential systems here share the same geometry, the construction of the space of initial conditions from the differential system is different and reveals extra considerations that must be made.
We also discuss  a number of related topics in the context of the geometric approach, such as Hamiltonian forms of the differential equations for the recurrence coefficients, Riccati solutions for special parameter values, and associated discrete Painlev\'e equations.

\end{abstract}

\section{Introduction} 
\label{sec:introduction}

It is well-known that a sequence of orthonormal   polynomials  $p_{n}(x) = \gamma_{n} x^{n} + \dots +\gamma_1 x + \gamma_0$ indexed by their degree $n$ satisfy a three term recurrence relation
\begin{equation}
	xp_n(x) = a_{n+1}p_{n+1}(x) + b_n p_n(x) + a_n p_{n-1}(x),
\end{equation}
where  $a_0=0$. The coefficients $a_n$ and $b_n$ are usually refered to as the \emph{recurrence coefficients} \cite{Chi:1978:AITOP, Ism:2005:CAQOPIOV, Sze:1967:OP}. 
The corresponding monic orthogonal polynomials $P_n = p_n/\gamma_n$ satisfy a similar three term recurrence relation
\begin{equation}
	xP_n(x) = P_{n+1}(x) + b_n P_n(x) + a_n^2 P_{n-1}(x). 
\end{equation}
Recently there has been considerable interest in connections between recurrence coefficients of semi-classical orthogonal polynomials and solutions of discrete or differential Painlev\'e equations (see, for instance, \cite{Van:2018:OPAPE} and numerous references therein).  
Usually it is first shown that the recurrence coefficients $\{a_{n},b_{n}\}$, as functions of the discrete variable $n$, satisfy, after some 
change of variables, a system of non-linear difference equations and, as functions also
of some continuous parameter appearing in the weight, satisfy a Toda-type differential-difference system. 
From these systems one can obtain a scalar second order (first or higher degree) nonlinear differential equation, which is usually very cumbersome. At this point the following questions arise, which we address in this paper: Is this differential equation (or some equivalent system of first order differential equations) reducible to one of the  Painlev\'e equations? If so, which Painlev\'e equation? How may one find an explicit change of variables realising this? 

In \cite{DFS19} the authors considered similar questions for discrete equations, and presented a general framework for how to determine whether a given discrete system can be reduced to a standard discrete Painlev\'e equation and, if so, obtain the relation explicitly. 
The main tool was the geometric theory of discrete Painlev\'e equations developed by H. Sakai in the seminal paper \cite{Sak:2001:RSAWARSGPE} (see also \cite{KajNouYam:2017:GAOPE}).  
Sakai's theory was preceded by the fundamental work of K. Okamoto \cite{Ok79}, in which he constructed spaces on which the differential Painlev\'e equations were, in a sense, regularised. 
The geometric approach provides powerful tools for the study of Painlev\'e equations, as evidenced by the large number of important studies that have taken cues from the work of Okamoto and Sakai, for instance \cite{
JD11, 
Sak13, ST02, T07, TOS05} in the differential case,   \cite{CDT17,DK19,DST13,DT18,Takenawa2001a,Takenawa2001b} in the discrete case, as well as references therein. 

In this paper we present a general procedure for answering the questions above in the differential case using a geometric approach. 
While the geometric theory is well-documented in the literature, this paper is intended as a self-contained guide to the relevant techniques for use by researchers to whom such identification problems are of interest, but for whom the geometric theory may be new. 
For this reason, this paper includes a detailed exposition of the background material and comprehensive illustrations of the calculation techniques, as well as data from \cite{KajNouYam:2017:GAOPE} and \cite{DFS19} necessary for some of the results.
While the examples we consider come from the theory of orthogonal polynomials, we emphasise that the procedure is applicable to any system of differential equations 
suspected to be reducible to one of the Painlev\'e equations.
We are essentially considering the question of which Painlev\'e equation a given second-order equation with the Painlev\'e property is equivalent to, which is sometimes referred to as the ``Painlev\'e  equivalence problem'' \cite{Cla19}. In cases where the equation can be written as a pair of first-order first-degree differential equations, our approach is applicable and solves the problem.

We have chosen two differential systems  as illustrative examples for this paper, with the  general procedure presented in such a way that anyone interested would be able to make necessary changes to adapt the calculations to another problem of interest. 
The first example is a system related to the recurrence coefficients of semi-classical Laguerre polynomials \cite[Cor. 2.5]{HC17}, which we will show is connected to the fourth Painlev\'e equation and transform it to the standard form. 
This system was deduced but not written down explicitly in \cite{HFC19}, so we present it below. 
The second is a system related to the recurrence coefficients of discrete orthogonal polynomials with the hypergeometric weight defined in \cite{FVA18}. 
 For this weight, although it is known that the recurrence coefficients are related to the sixth Painlev\'e equation \cite{FVA18} (see also \cite{HFC19} for an approach by direct calculation), the geometric method reveals some new features of the differential system and its regularisation, which we wanted to discuss in detail. 
In addition, the analysis of the hypergeometric case makes use of a significant amount of data from \cite{DFS19}, in which the authors applied a similar geometric method to a system of difference equations for the recurrence coefficients.
The underlying geometry is the same for the differential equations, but its construction from the system itself is different and requires some careful considerations that we describe here.

\subsection{Background}
Solutions of nonlinear differential equations may have singularities that depend not just on the equation, but also on
initial conditions.  
P. Painlev\'e  defined a property of nonlinear ordinary differential equations 
(essentially that solutions are single-valued about \emph{movable} singularities, i.e., those whose locations are dependent on initial conditions), which  is now known as the \emph{Painlev\'e property}. 
P. Painlev\'e and his student B.~Gambier then studied a large class of  second-order nonlinear  differential equations that satisfy this condition and found that all the families could be solved in terms of elementary functions, classical special functions, as well as the solutions of six new families,
now known as the  \emph{Painlev\'e equations} $\Pain{I},\dots, \Pain{VI}$. 
Solutions of these equations, the so-called \emph{Painlev\'e transcendents}, 
are indeed new \emph{purely nonlinear special functions}. 
Over the last fifty years Painlev\'e transcendents have been playing an 
increasingly important role in many nonlinear models in mathematics and physics, from Quantum Cohomology to the theory of Random Matrices. 
Probably the most important example is the famous Tracy-Widom distribution from Random Matrix Theory, which can be expressed in terms of 
the Hastings-McLeod solution of $\Pain{II}$ \cite{TracyWidomA, TracyWidomB}. 

\subsubsection{Okamoto's space} 
The Painlev\'e equations possess a number of remarkable properties, for example B\"acklund transformation symmetries (which relate solutions with various values of the parameters), Hamiltonian forms, Lax pairs, classical solutions for special parameter values, among many more \cite{GLS, IKSY, No, NouYam:1998:AWGDDSAPE}. 
In particular, Hamiltonian forms and symmetries of the Painlev\'e equations were studied in \cite{OkI, OkII, Ok1, Ok2, Ok3, Ok4, OKSO}. \\
Okamoto also discovered a geometric structure common to the six Painlev\'e equations, namely the existence of a \emph{space of initial conditions} \cite{Ok79}. For each $\Pain{J}$, Okamoto considered an equivalent non-autonomous Hamiltonian system with polynomial Hamiltonian on the trivial bundle $\C^2 \times B_{\operatorname{J}}$ over the independent variable space $B_{\operatorname{J}}$ (the complex plane with fixed singularities of the equation removed).
By first compactifying the $\C^2$-fibers, blowing up certain singularities then removing certain curves, Okamoto obtained a  bundle $E_{\operatorname{J}}$ over $B_{\operatorname{J}}$, of which the flow of the Hamiltonian system induces a uniform foliation. 
Each fiber parametrises the set of solutions of $\Pain{J}$, so can be regarded as a \emph{space of initial conditions}. 
These spaces were studied further in a series of papers \cite{T1, T2, M} for the Hamiltonian forms of the Painlev\'e equations $\Pain{II}$-$\Pain{VI}$ (the $\Pain{I}$ case was later considered in \cite{orbifold}, see also \cite{chiba}). 
In particular, for each space $E_{\operatorname{J}}$ the authors constructed a \emph{symplectic atlas}, provided by a number of coordinate neighbourhoods between which the transition maps are rational and symplectic. 
Moreover, certain uniqueness results for Hamiltonian systems on these spaces were proved:   any non-autonomous Hamiltonian system whose Hamiltonian structure is  holomorphic on $E_{\operatorname{J}}$ and 
meromorphically extendable to its closure must coincide with Okamoto's Hamiltonian form of the Painlev\'e equation $\Pain{J}$.  
This gives rise to the idea that a global analysis of the Hamiltonian forms of the Painlev\'e equations reduces to the geometry of these spaces of initial conditions.

\subsubsection{Semi-classical Laguerre weight} \label{Laguerresubsubsection}

We consider a semi-classical Laguerre weight given by $w(x,c)=w(x,\alpha,c):=x^{\alpha} e^{-N(x+c(x^2 -x))}$, where $x\in (0,\infty)$, $\alpha>-1$, $c\in [0,1],$ $N>0$.  
A similar weight $\tilde{w}(x, \alpha, t) = x^{\alpha} e^{- x^2 + t x}$ was considered in \cite{BVA18, FVAZ12, CJ14}, where equations for the recurrence coefficients were derived and studied, with the differential equations in particular found to be related to $\pain{IV}$.

In \cite{HC17} it was shown that the recurrence coefficients satisfy
a system of  discrete equations
\begin{equation}\label{discrete1}
\begin{aligned} 
2c (a_{n+1}^2+a_n^2)+2c b_n^2+(1-c)b_n &=(2n+\alpha+1)/N  , \\
a_n^2(2c b_n+1-c)(2c b_{n-1}+1-c) &=  (2c a_n^2-n/N)(2c a_n^2-(n+\alpha)/N),
\end{aligned}
\end{equation}
with initial conditions $a_0^2=0$ and $b_0$ as an expression in terms of the parabolic cylinder functions. 
Moreover, as functions of the parameter $c$ in the weight they satisfy a Toda-type system
\begin{equation}\label{Toda P4}
\begin{aligned} 
2c {(\rm{ln}} (a_n^2) )'   &=N(1+c)(b_n-b_{n-1}-2)  , \\
2c b_n'  & =-b_n+N(1+c)(a_{n+1}^2 -a_n^2)  .
\end{aligned}
\end{equation}
Note that we have made slight changes to the notation in \cite{HC17}, renaming the parameter $s$ by $c$ and replacing the functions $R_n(s)$  and $r_n(s)$ by $x_n(c)$ and $y_n(c)$ respectively.   The recurrence coefficients of the monic orthogonal polynomials with this semi-classical Laguerre weight   are related to $x_n$ and $y_n$ as follows: 
$$b_n=(c-1+x_n)/(2c), \;\;a_n^2=(n+N y_n)/(2N c).$$ 
With this notation the discrete system (\ref{discrete1}) becomes
\begin{equation} \label{discrete}
\begin{aligned}
x_n x_{n-1} &= \frac{2Nc y_n(y_n-\alpha/N) }{n+N y_n} , \\
2(y_{n+1}+y_n) &= \frac{2\alpha}{N}-\frac{x_n^2+(c-1)x_n}{c} .
\end{aligned}
\end{equation}
From (\ref{Toda P4}) and (\ref{discrete}) one may obtain the following system of differential equations for  $x(c):=x_n(c),\, y(c):=y_n(c) $: 
\begin{equation} \label{differential}
\left\{ 
\begin{aligned} 
\frac{x'}{c+1} &= -N \frac{x^2}{4c^2} + \left( \frac{N(1-c)}{4c^2} + \frac{1}{2c(c+1)} \right) x - N \frac{y}{c} + \frac{\alpha}{2c}, \\
\frac{y'}{c+1} &= -N \frac{y^2}{2c x} + \left( \frac{\alpha}{2 c x} + N \frac{x}{4c^2} \right) y  + n \frac{x}{4 c^2} .
\end{aligned}
\right.
\end{equation}
System (\ref{differential}) is a system of coupled Riccati equations for $x(c)$ and $y(c)$, which is the differential system of interest to us in this case. Eliminating $y$, one may obtain a  second order ordinary differential equation (ODE), which the authors in \cite{HC17} remarked was likely to be equivalent to one of the Painlev\'e equations, but did not transform it to any of the standard forms. 
In addition to the fact that the recurrence coefficients for the similar but less general weight $\tilde{w}(x,\alpha,t) = x^{\alpha} e^{- x^2 + t x}$ are known to be related to $\pain{IV}$, there are several reasons that this is the Painlev\'e equation to which one should suspect the system \eqref{differential} is related, which we outline now\footnotemark.
\footnotetext{ We thank an anonymous reviewer of an earlier version of this paper for bringing these observations to our attention.}

Firstly, the second-order equation for $x(c)$ is given explicitly by
\begin{equation}
\begin{aligned}
x'' &= \frac{1}{2 x} (x')^2 - \frac{1}{c(1+c)} x' + \frac{3 N^2 (1+c)^2 }{32 c^4 }x^3 + \frac{N^2 (c-1)(c+1)^2 }{8 c^2} x^2 \\
&\qquad \left[ \frac{ (1-c^2)^2 N^2}{32 c^4} - \frac{ N (2n + \alpha +1)(1+c)^2}{8 c^3} + \frac{1-3c}{8 c^2(1+c)} \right] x - \frac{\alpha^2 ( 1+c)^2}{8 c^2 x},
\end{aligned}
\end{equation}
which we see has a similar form to the usual scalar fourth Painlev\'e equation
\begin{equation}
\frac{d^2 w}{dz^2} = \frac{1}{2w} \left( \frac{dw}{dz} \right)^2 + \frac{3}{2} w^3 + 4 z w^2 + 2(z^2 - A) w + \frac{B}{w},
\end{equation}
where $A, B$ are constants. 
In addition, the fact that the initial conditions for the discrete system are given in terms of the parabolic cylinder functions also suggests a connection to $\pain{IV}$, since these initial conditions will correspond to a special solution of the differential system \eqref{differential} at $n=0$. Under a transformation to one of the Painlev\'e equations these (and indeed the whole sequence of recurrence coefficients) would be mapped to a hierarchy of special solutions. The fact that $\pain{IV}$ is the Painlev\'e equation which admits special solutions expressed in terms of parabolic cylinder functions is further indication that this is the natural ``target equation" to which one should seek a transformation from the system \eqref{differential}. We remark as well that when $c=1$ the problem can be reduced to the study of the weight $\tilde{w}(x,\alpha,1) = x^{\alpha} e^{- x^2 + x}$, for which the connection to the parabolic cylinder functions and $\pain{IV}$ were already identified in \cite{CJ14}. This weight was also studied in \cite{FVAZ12}.
 

We show step by step how we can identify the connection with the fourth Painlev\'e equation and find an explicit change of variables to the standard form. 
While this can be achieved by alternative methods, in particular identifying $\pain{IV}$ as the target equation as outlined above and proceeding to isolate the transformation by direct computation, this relies on several choices being made, notably the form of the Ansatz for the transformation and the matching of parameters.
The geometric method is systematic and eliminates the need for these choices.
The purpose of this paper is to present this method itself as well as extra insights it yields, such as Hamiltonian structures for the differential equations for the recurrence coefficients.
We choose this example in order to demonstrate this method in a detailed and systematic way, with full exposition of the required geometric theory so it can be adapted to other cases.

\subsubsection{Discrete orthogonal polynomials with hypergeometric weight}
The discrete orthogonal polynomials $p_n(x)$ with the hypergeometric weight are defined as follows \cite{FVA18}: they  are orthonormal polynomials on the set $\mathbb{N}=\{0,1,2,\ldots\}$ of non-negative integers with respect to the hypergeometric weight $w_{k}$, so
\begin{equation}\label{eq:hyp-weight}
	\sum_{k=0}^\infty p_n(k)p_m(k) w_k = \delta_{m,n}, \qquad 
	w_k = \frac{(\alpha)_k (\beta)_k}{(\gamma)_k k!} c^k, \quad \alpha,\beta,\gamma >0,\ 0 < c < 1,
\end{equation}
where $(\cdot)_k$ is the usual Pochhammer symbol and $\delta_{m,n}$ is the Kronecker delta.    In \cite[Theorem 3.1]{FVA18} a system of two first-order difference equations was obtained for variables $x_n$ and $y_n$ related to the recurrence coefficients $a_n^2$ and $b_n$ as follows:
\begin{equation}\label{(2.1)}
   a_n^2\frac{1-c}{c}=y_n+\sum_{k=0}^{n-1}x_{k}+\frac{n(n+\alpha+\beta-\gamma-1)}{1-c},
\end{equation}
\begin{equation}\label{(2.2)}
   b_{n}=x_{n}+\frac{n+(n+\alpha+\beta)c-\gamma}{1-c}.
\end{equation}
In addition, we have that
\begin{equation}\label{(2.5)}
    \frac{(1-c)^2}{c^2} a_n^2x_nx_{n-1}=y_n(y_n-\alpha\beta+\frac{\gamma}{c})+(\alpha\beta-y_n)\frac{1-c}{c}\sum_{k=0}^{n-1}x_{k}.
\end{equation}
From (\ref{(2.1)}) and (\ref{(2.5)}) one may obtain an alternative expression for $a_n^2$:
\begin{equation}\label{(2.6)}
    a_n^2=\frac{n\alpha\beta c(n+\alpha+\beta-\gamma-1)-c[n^2+n(\alpha+\beta-\gamma-1)-\alpha\beta+\gamma]y_n-cy_n^2}{(c-1)^2(\alpha\beta-x_{n-1}x_n-y_n)}.
\end{equation}
The sequences $(x_n)_{n\in\mathbb{N}},\,(y_n)_{n\in\mathbb{N}}$ satisfy the following discrete system  (see \cite[Theorem 3.1]{FVA18}):
\begin{gather}\label{(2.3)}
\begin{split}
(y_{n}-&\alpha\beta+(\alpha+\beta+n)x_{n}-x_{n}^{2})(y_{n+1}-\alpha\beta+(\alpha+\beta+n+1)x_{n}-x_{n}^{2})\\
       &=\frac{1}{c}(x_{n}-1)(x_{n}-\alpha)(x_{n}-\beta)(x_{n}-\gamma),
\end{split}
\end{gather}
and
\begin{gather}\label{(2.4)}
\begin{split}
&(x_{n}+Y_{n})(x_{n-1}+Y_{n})\\
&=\frac{(y_{n}+n\alpha)(y_{n}+n\beta)(y_{n}+n\gamma-(\gamma-\alpha)(\gamma-\beta))(y_{n}+n-(1-\alpha)(1-\beta))}{(y_{n}(2n+\alpha+\beta-\gamma-1)+n((n+\alpha+\beta)(n+\alpha+\beta-\gamma-1)-\alpha\beta+\gamma))^{2}},
\end{split}
\end{gather}
where
\begin{equation*}
Y_{n}=\frac{y_{n}^{2}+y_{n}(n(n+\alpha+\beta-\gamma-1)-\alpha\beta+\gamma)-\alpha\beta n(n+\alpha+\beta-\gamma-1)}{y_{n}(2n+\alpha+\beta-\gamma-1)+n((n+\alpha+\beta)(n+\alpha+\beta-\gamma-1)-\alpha\beta+\gamma)}.
\end{equation*}
The initial values $x_0$ and $y_0$ are given by
\begin{equation}\label{initial conds}
x_0=\frac{\alpha\beta c}{\gamma}\frac{{}_2F_{1}(\alpha+1,\beta+1;\gamma+1;c)}{{}_2F_{1}(\alpha,\beta;\gamma;c)}+\frac{(\alpha+\beta)c-\gamma}{c-1},\;\; y_0=0,
\end{equation}
where $_2F_1$ is the Gauss hypergeometric function. For the hypergeometric weight the recurrence coefficients satisfy the Toda system
\begin{eqnarray}\label{(3.1)}
     c \frac{d}{dc} a_n^2 &=& a_n^2(b_n-b_{n-1}), \qquad n \geq 1,  \\ \label{(3.2)}
     c \frac{d}{dc} b_n &=&  a_{n+1}^2 - a_n^2, \qquad n \geq 0.
\end{eqnarray}

Let us recall, from \cite{HFC19}, the procedure for obtaining a system of differential equations for $x_n(c)$ and $y_n(c)$.   First replace $x_n$ and $y_n$ in (\ref{(2.3)}) and (\ref{(2.4)}) by $x_n(c)$ and $y_n(c)$  respectively.   We may solve equation (\ref{(2.3)}) for $y_{n+1}(c)$ in terms of  $x_n(c)$ and $y_n(c)$, and similarly we may solve equation (\ref{(2.4)}) to give $x_{n-1}(c)$ in terms of $x_n(c)$ and $y_n(c)$. Next we replace $n$ in  (\ref{(2.4)}) by $n+1$ and substitute the expression for $y_{n+1}(c)$ found previously. This gives us an opportunity to find an expression for $x_{n+1}(c)$ in terms of $x_n(c)$ and $y_n(c)$. Next, we need to modify the Toda system. Equations (\ref{(2.2)}) and (\ref{(2.6)}) are expressions for the recurrence coefficients $a_n^2(c)$ and $b_n(c)$  in terms of $x_n(c)$, $ x_{n-1}(c)$ and $y_n(c)$. We substitute (\ref{(2.2)}) and (\ref{(2.6)}) into the Toda system. Next we substitute $y_{n+1}(c)$, $x_{n-1}(c)$ and their derivatives along with the expression for $x_{n+1}(c)$ found previously into the modified Toda system. This gives us a system of two first order differential equations  for $x(c)=x_n(c)$ and $y(c)=y_n(c)$ of the form
\begin{equation}\label{syst P6}
\left\{
\begin{aligned}
x'(c)&=\frac{P_1(x(c),y(c),c)}{Q(x(c),y(c),c)},\\
y'(c)&=\frac{P_2(x(c),y(c),c)}{Q(x(c),y(c),c)},
\end{aligned}
\right.
\end{equation}
 where $P_1$, $P_2$ and $Q$ are polynomials in their arguments. 
 Explicitly, with $x=x(c)$, $y=y(c)$, these are given by
\begin{equation*}
\begin{aligned}
P_1(x,y,c) &= (1-c)x^4 + [-\alpha -\beta +2 c (\alpha +\beta +n)-\gamma -1] x^3 \\
&\quad+\left[\alpha  (\beta +\gamma +1)+\beta  \gamma +\beta -c \left(\beta ^2+2 \beta  (2 \alpha +n)+(\alpha +n)^2\right)+\gamma \right] x^2 \\
&\quad+ \left[ \alpha  \beta  (2 c (\alpha +\beta +n)-1)-\gamma  (\alpha  \beta +\alpha +\beta ) \right] x + \alpha  \beta  (\gamma -\alpha  \beta  c) \\
&\quad+ 2c y\left[ x^2 -(\alpha+\beta+n) x + \alpha \beta \right] - c y^2,\\
P_2(x,y,c) &= n \left[\alpha ^2+\alpha  \beta -\alpha +\beta ^2-\beta +\gamma +n^2-\gamma  (\alpha +\beta +n)+2 \alpha  n+2 \beta  n-n\right] x^2 \\
&\quad - 2 \alpha  \beta  n \left(\alpha +\beta -\gamma +n-1 \right) x + n \alpha \beta(\alpha \beta - \gamma) \\
&\quad+y \Big[  \left(\alpha +\beta -\gamma +2 n-1\right) x^2 + 2 \left(-\alpha  \beta +\gamma +n^2+n (\alpha +\beta -\gamma -1) \right) x \\
&\quad  -\alpha  \gamma -\beta  \gamma +\alpha  \beta  (\gamma -2 n+1) \Big] + y^2 \left[ 2x - \gamma +n -1 \right],\\
Q(x,y,c)&=c(c-1)(\alpha\beta-(n+\alpha+\beta)x+x^2-y).
\end{aligned}
\end{equation*}
This system (\ref{syst P6}) is the one of interest in the case of the hypergeometric weight, and we analyse it below in detail.

\subsection{Outline of the paper}

We will begin by outlining the method as a step-by-step procedure in \autoref{section2}, then demonstrate it in detail in \autoref{section3} for the first example of the system (\ref{differential}) from the semi-classical Laguerre weight, then in \autoref{section4} for the system (\ref{syst P6}) from the hypergeometric weight. In the process we will point out some aspects of the geometric theory of Painlev\'e equations that are important to our approach as well as some features of the equations that are uncovered through our analysis. 

Relating to the method itself, we note some important considerations that must be made in using blowups to construct spaces of initial conditions for differential systems, which are different from the discrete case. In particular careful attention that has to be paid to the behaviour of the vector field defining the system, rather than simply where its components have indeterminacies.
Further, once a space of initial conditions is constructed and found to correspond to a family of Sakai surfaces, it is slightly simpler in the differential case to obtain the transformation to the standard Painlev\'e equation as we do not have to ensure the identification on the level of the Picard lattice matches the translations giving the discrete dynamics,
see Remark 3.11 (unless there is a particular desired matching of parameters between the differential systems as in for example \cite{DFLS}). 
There is also a key difference in the theoretical basis for our procedures for identifying systems as equivalent to Painlev\'e equations between the discrete and differential cases: for discrete systems, if we construct a space of initial conditions, then the system is discrete Painlev\'e by definition if this is a family of Sakai surfaces and the dynamics correspond to a translation symmetry. 
For differential systems, after constructing a space of initial conditions given by a family of Sakai surfaces of one of the types corresponding to the differential Painlev\'e equations, we need an extra step to justify the fact that the identification of surfaces transforms the given system to the standard Painlev\'e equation, via the uniqueness results for Hamiltonian systems on Okamoto's spaces \cite{T1, T2, M}. 

Relating to the features of the differential systems for the recurrence coefficients beyond the identifications with the standard forms of Painlev\'e equations, for both examples we obtain Hamiltonian structures for the differential equations using geometric techniques.
We also show how the initial conditions for the discrete systems in each case correspond to seed solutions of a hierarchy of Riccati special solutions of the differential Painlev\'e equation, so the recurrence coefficients can in principal be written explicitly using the determinantal representations for this hierarchy.
%
%

\section{The identification procedure for differential systems} \label{section2}

This procedure consists of the following steps, where
we assume that we indeed can find a birational transformation to some differential Painev\'e equation, otherwise the process will terminate at some step. Note that the procedure follows some of the same lines as the discrete case presented in \cite{DFS19}, in particular Steps 2-4.
However there are certain differences, particularly in the construction of a space of initial conditions, which we will point out when we arrive at them. 
In the following outline we make reference to some concepts from the geometric theory, which will be explained in detail in our first expository example.

\begin{enumerate}[(Step 1)]
	\item \textbf{Construct a space of initial conditions for the system.} 
			Begin by considering the system as a pair of first order equations. We compactify the fibres of the phase space of the nonautonomous system from $\mathbb{C}^{2} = \mathbb{C}\times \mathbb{C}$ to $\mathbb{P}^{1} \times \mathbb{P}^{1}$, so we have a trivial bundle over $\C$ (possibly with some fixed singularities of the system removed) on which the differential equation corresponds to a rational vector field. Find the points where the components of the vector field have indeterminacies (points where
both the numerator and the denominator of the rational functions vanish). Here we run into a subtle point that is particular to the differential case, which is that we must determine whether these singular points of the vector field are accessible (in the sense that they can be reached by solutions with initial data where the vector field is regular). Resolve all such accessible singular points using the blowup procedure, until we have a space to which the vector field lifts to one with no more accessible singularities (we will address the question of when an indeterminacy of the vector field represents an accessible singularity in detail in our second example). Identify the \emph{inaccessible divisors} in the fibers of the resulting bundle and remove them, after which we arrive at a space of initial conditions: a bundle over the independent variable space admitting a uniform foliation by solution curves transverse to the fibers.  
	\item \textbf{Determine the surface type, according to Sakai's classification scheme.} 
            In Sakai's scheme, the surfaces associated with Painlev\'e equations are obtained from $\p^1\times\p^1$ through the blowups of \emph{eight} points. However in practice we may require more blowups to arrive at a space of initial conditions for the differential system, which would result in a non-minimal surface which must be further transformed by blowing down some $(-1)$-curves to arrive at a Sakai surface.
			Once we have a space of initial conditions, the inaccessible divisors on each surface should form the irreducible components of an anticanonical divisor and should each have self-intersection index $-2$. This divisor can be identified as the proper transform of 
			some 
			biquadratic curve on $\mathbb{P}^{1}\times \mathbb{P}^{1}$ (i.e., a curve whose defining polynomial, when written in a coordinate chart, has
			bi-degree $(2,2)$). The components of the anticanonical divisor should also be associated with an \emph{affine Dynkin diagram}; components correspond to nodes, which are connected when the corresponding components intersect. The type of this Dynkin diagram determines the type of the surface in Sakai's classification, and is called the \emph{surface type} of the system. 
	\item \textbf{Find an identification with the standard model on the level of $\operatorname{Pic}(\mathcal{X})$.} At this step, make an identification between the surfaces obtained above and the standard model of Sakai surfaces of the same type on the level of their Picard lattices. We need to ensure that this change of basis 
			identifies the \emph{surface roots} (or nodes of the Dynkin diagrams of our surface corresponding to the inaccessible divisors) with the standard example, and matches the semigroups of effective divisor classes. Note that, 
			unless there is some desired parameter matching with the standard form of the Painlev\'e equation in question,
the method from this point onward is simpler than that for the discrete case \cite{DFS19}. 
If we are interested in identifying only the differential systems, rather than the discrete and differential systems simultaneously, then we do not need to adjust this identification.
	\item \textbf{Find the change of variables reducing the given system to the standard form.}  We need to find the birational mapping that induces
			the identification from the previous step, which will provide the change of variables to the standard form of the relevant Painlev\'e equation. For this, we form an Ansatz and successively impose conditions from the identification to determine the coefficients. An important part
			of this computation is the identification of various parameters between the two problems, which may be done using the \emph{period mapping} on the surfaces forming our space of initial conditions.
	
\end{enumerate}

We remark 
that the compactification in Step 1 may be chosen as a different minimal model, namely $\p^2$ or one of the Hirzebruch surfaces $\mathbb{F}_l$.
A different choice of compactification does not in principle pose a problem, and the only adaptations to the method to be made come from elementary facts about these minimal models, e.g. the structure of their Picard groups.
However, in our experience choosing one of the Hirzebruch surfaces leads more often to situations where blowdowns are required to arrive at a family of Sakai surfaces, and we make the choice of $\p^1 \times \p^1$ since this is what is used in the reference models of surfaces in \cite{KajNouYam:2017:GAOPE}.

\section{Semi-classical Laguerre weight} \label{section3}

\subsection{The space of initial conditions for the semi-classical Laguerre weight} \label{section31}
We first construct a space of initial conditions for the differential system \eqref{differential}. Since it is nonautonomous and has a fixed singularity at $c=0$, we consider its phase space first as a trivial bundle over $\C \backslash\{0\}$ with fiber over $c$ being $\C^2$ with coordinates $(x,y)$. We then compactify the $\C^2$-fibers to $\p^1 \times \p^1$. Considering the variables $(x,y)$ from \eqref{differential} as a set of affine coordinates, we introduce $X = 1/x, Y= 1/y$, so $\p^1 \times \p^1$ is covered by the usual four charts, namely $(x,y), (X,y), (x,Y), (X,Y)$. The equations \eqref{differential} then give a rational vector field on the part of the bundle visible in the $(x,y)$-chart. Via the transition functions $X = 1/x, Y=1/y$, this extends uniquely to define a rational vector field on the whole bundle, the indeterminacies of which we will be interested in. 

We see from \eqref{differential} that the vector field is regular on the part of the $(x,y)$-coordinate neighbourhood where $x \neq 0$.  The vector field diverges where $x = 0$ except for at the points of indeterminacy
\begin{equation}
q_1 : (x,y) = (0,0), \quad \quad q_2 : (x,y) = (0, \alpha / N),
\end{equation}
with both components $x',y'$ being indeterminate at both of these points. We blow up each of these points in the $\p^1\times\p^1$ fiber, introducing for each $q_i$ a pair of $\C^2$-coordinate charts $(u_i, v_i), (U_i, V_i)$, in which the \emph{exceptional divisor} $F_i$ replacing $q_i$ is given by $v_i=0,V_i=0$, respectively:
\begin{gather*} 
(x,y) = (u_1 v_1, v_1) = (V_1 , U_1 V_1), \\
(x,y - \alpha / N) = (u_2 v_2, v_2) = (V_2 , U_2 V_2) .
\end{gather*}
The vector field on the bundle lifts uniquely under the blowup of the fibre, and can be computed by direct substitution using the relations above as changes of variables. 
For example, in the $(u_1,v_1)$ chart we have
\begin{subequations} \label{u1v1}
\begin{align}
u_1' &= \frac{-2c(c+1)N + (2c+N-c^2N)u_1 - (c+1)n u_1^2 - 2N(c+1) u_1^2 v_1}{4 c^2}, \\
v_1' &= \frac{(c+1) \left( 2c \alpha - 2cN v_1 + n u_1^2 v_1 + N u_1^2 v_1^2 \right)}{4 c^2 u_1}.
\end{align}
\end{subequations}
Similarly, in the $(U_1 ,V_1)$ chart we have 
\begin{subequations} \label{U1V1}
\begin{align}
U_1' &= \frac{(c+1)n + (c^2N-N-2c) U_1 + 2c(c+1)N U_1^2 + 2(c+1)N U_1 V_1}{4 c^2},\\
V_1' &= \frac{2 c (c+1) \alpha + (2c+N-c^2N)V_1 - 4c(c+1)N U_1V_1 - (c+1)N V_1^2}{4 c^2}.
\end{align}
\end{subequations}
From \eqref{u1v1} and \eqref{U1V1}, we may deduce that the singularity at $q_1$ is resolved. To be precise, we have the vector field on the exceptional divisor being given in the first chart by substituting $v_1=0$ in \eqref{u1v1}:
\begin{subequations} \label{u1v1zero}
\begin{align}
u_1' &= \frac{-2c(c+1) N + (2c + N -c^2N)u_1 - (c+1)n u_1^2}{4c^2}, \\
v_1' &= \frac{(c+1) \alpha}{2c u_1},
\end{align}
\end{subequations}
and in the second chart by substituting $V_1 = 0$ in \eqref{U1V1}:
\begin{subequations} \label{U1V1zero}
\begin{align}
U_1' &= \frac{(c+1)n -(2c+N-c^2N)U_1+2c(c+1)N U_1^2 }{4c^2},\\
V_1' &= \frac{(c+1)\alpha}{2c}.
\end{align}
\end{subequations}
For each point $p$ on $F_1$ where $u_1 \neq 0$, the vector field is regular so for any path through $c_0 \in \C \backslash \{0\}$ we have a unique analytic solution passing through $p$ in the fiber over $c_0$. We already knew no solution curves pass through the part of the bundle given by $x=0$ away from $q_1, q_2$ in this chart. So, we have a family of disjoint local solution curves parametrised by where they intersect $F_1$, and we have resolved the singularity at $q_1$. Computing along the same lines allows us to deduce that the vector field is regular on the part of $F_2$ where $u_2 \neq 0$, and the singularity at $q_2$ is also resolved. We note that the points on $F_1, F_2$ given by $(u_1,v_1)=(0,0)$, $(u_2,v_2) = (0,0)$ respectively are inaccessible in the same sense that the part of the $(x,y)$-coordinate patch where $x=0$ but $y \neq 0,\alpha/N$ is. In fact these points on the exceptional lines correspond to their intersection with the \emph{proper transform} (also known as strict transform) of the line in $\p^1\times\p^1$ defined by $x=0$, which we explain now for completeness. 

Denote the projection map from the blowups of $q_1, q_2$ by 
\begin{equation}
\pi_{12} : \operatorname{Bl}_{q_1 q_2}(\p^1 \times \p^1) \rightarrow \p^1 \times \p^1,
\end{equation}
so $\pi_{12}$ is an isomorphism away from $q_1, q_2$, and the exceptional divisors are $F_1 = \pi_{12}^{-1}(q_1)$, $F_2 = \pi_{12}^{-1}(q_2)$. If $\mathcal{C}$ is a curve on $\p^1\times \p^1$, its \emph{total transform} (or pullback) is a curve on $\operatorname{Bl}_{q_1q_2}(\p^1 \times \p^1)$ given as the preimage $\pi_{12}^{-1} (\mathcal{C})$. If $\mathcal{C}$ is irreducible and does not pass through $q_1, q_2$, then its total transform will also be irreducible and isomorphic to $\mathcal{C}$. However, if $\mathcal{C}$ passes through either $q_1$ or $q_2$, then its preimage under $\pi_{12}$ will have  more irreducible components. For example, the line in $\p^1 \times\p^1$ defined by $x=0$ intersects both $q_1$ and $q_2$, and in the charts $(u_1, v_1)$, $(u_2, v_2)$ for $\operatorname{Bl}_{q_1q_2}(\p^1 \times \p^1)$ we may compute its preimage under $\pi_{12}$ in charts by direct substitution:
\begin{equation}
x = u_1 v_1 = 0,\quad \quad x = u_2 v_2 = 0.
\end{equation}
These local equations reveal that the total transform has three irreducible components, namely $F_1, F_2$ (given by $v_1=0, v_2=0$ respectively) and the \emph{proper transform} of the line $\{x=0\}$, which can be understood as the closure in $\operatorname{Bl}_{q_1q_2}(\p^1 \times \p^1)$ of the preimage $\pi_{12}^{-1}\left(\left\{ x=0 \right\} \backslash \left\{q_1, q_2 \right\}\right)$. 
For the remainder we will use the standard notation for the total and proper transforms of divisors under blowups. 
For the current example, if the divisor $\{ x =0 \}$ on $\p^1 \times \p^1$ is  denoted by $H_x$, then we use the same symbol for its total transform on $\operatorname{Bl}_{q_1q_2}(\p^1 \times \p^1)$. Then the proper transform is written as $H_x - F_1 - F_2$, since unions of curves correspond to sums of divisors and we have the total transform written as the sum of its three irreducible components as $H_x = (H_x - F_1- F_2) + (F_1) + (F_2)$.

We see from the above equations that the intersections of the proper transform with $F_1, F_2$ are given in coordinates by $(u_1,v_1)=(0,0)$, $(u_2, v_2)=(0,0)$ respectively. From this, our previous calculations show that the vector field diverges on the proper transform of $\{x=0\}$ and no solution curves from elsewhere on the bundle pass through it: it is an \emph{inaccessible divisor}.

We now proceed with our analysis elsewhere on the bundle, beginning with the $(x,Y)$-chart. After direct substitution the only indeterminacy we find here is given by $(x,Y) = (0, N/\alpha)$, which is just $q_2$ in this chart. We again see that the line $\left\{x=0\right\}$ away from $q_2$ is inaccessible, and we identify another inaccessible divisor given by the line $\{Y=0\}$. In the chart $(X,y)$, we have another singularity at 
\begin{equation}
q_3 : (X,y) = (0, -n / N),
\end{equation}
where we note that, importantly, the first component $X'$ is regular but $y'$ is indeterminate. We observe again that the vector field, or more precisely its second component $y'$, diverges on $\{X=0\}$ away from $q_3$. Blowing up $q_3$ and introducing charts
\begin{equation} 
(X,y + n / N) = (u_3 v_3, v_3) = (V_3 , U_3 V_3),
\end{equation}
we see by a calculation similar to those in the cases of $q_1, q_2$ that the vector field is then regular on the exceptional divisor $F_3$ except at the point $(u_3, v_3) = (0,0)$, which is its intersection with the proper transform of the line $\{X = 0\}$.

At this point, we have resolved all singularities of the system away from $(X,Y)=(0,0)$ on $\p^1 \times \p^1$. Indeed, in the $(X,Y)$ chart we have 
\begin{subequations}
\begin{align}
X' &= \frac{(c+1)N Y - (2c+N-c^2N) X Y + 4c(c+1)N X^2 - 2c(c+1) \alpha X^2 Y}{4c^2 Y}, \\
Y' &= \frac{ -(c+1)N Y - (c+1) n Y^2 + 2c (c+1)N X^2 - 2c(c+1)\alpha X^2 Y}{4c^2 X},
\end{align}
\end{subequations}
so we see a new singularity at
\begin{equation}
q_4 : (X, Y) = (0,0),
\end{equation}
at which both components $X',Y'$ are indeterminate. Blowing this up and introducing charts $(u_4,v_4), (U_4,V_4)$ according to
\begin{equation}
(X,Y) = (u_4 v_4, v_4) = (V_4 , U_4 V_4),
\end{equation}
we see that, unlike $q_1, q_2, q_3$, the singularity is not immediately resolved. In particular, the vector field in the second chart is given by 
\begin{subequations}
\begin{align}
U_4' &= \frac{ - 2(c+1)N U_4 -2c(c+1)N V_4 +(2c+N-c^2N)U_4 V_4 - (c+1)n U_4^2 V_4}{4c^2 V_4}, \\
V_4' &= \frac{ (c+1)N U_4 +4c(c+1)N V_4 -(2c+N-c^2N)U_4 V_4 - 2c(c+1) \alpha U_4^2 V_4}{4c^2 U_4},
\end{align}
\end{subequations}
and we see that on the exceptional divisor $F_4$ there is still a singularity
\begin{equation}
q_5 : (U_4, V_4) = (0,0),
\end{equation}
at which both $U_4', V_4'$ are indeterminate, so we must blow up this point too. To resolve the singularity at $q_4$, we require five blowups in total, of points defined in coordinates as follows:
\begin{equation} \label{p4cascadecharts}
\begin{aligned}
&q_4 : (X,Y) = (0,0), &&(X,Y) = (u_4 v_4, v_4) = (V_4 , U_4 V_4), \\
&q_5 : (U_4, V_4) = (0,0), &&(U_4, V_4) = (u_5 v_5, v_5) = (V_5 , U_5 V_5), \\
&q_6 : (u_5,v_5) = (-2c,0), &&(u_5 + 2c, v_5) = (u_6 v_6, v_6) = (V_6 , U_6 V_6), \\
&q_7 : (u_6,v_6) = (2c(c-1),0), &&(u_6 - 2c(c-1), v_6) = (u_7 v_7, v_7) = (V_7 , U_7 V_7),
\end{aligned}
\end{equation}
\begin{equation*}
\begin{aligned}
&q_8 : (u_7,v_7) = ( - \frac{2c}{N} \left( N+ c^2 N + 2c( 1+n + \alpha - N) \right),0 ), \\
&~~\quad \quad \left(u_7 + \frac{2c}{N} \left( N + c^2 N + 2c( 1+n + \alpha - N) \right), v_7\right) = (u_8 v_8, v_8) = (V_8 , U_8 V_8).
\end{aligned}
\end{equation*}
By direct calculation in the two charts $(u_8,v_8)$ and $(U_8,V_8)$ we find that the vector field on the exceptional divisor $F_8$, where $v_8 = 0$ respectively $V_8=0$, has no more indeterminacies. After making the change of variables and then substituting $v_8=0$ this is given by
\begin{subequations} \label{u8v8zero}
\begin{align}
u_8' &= \frac{\mathcal{U}(c,n,N,\alpha)+ N(3c^2 N-3N+10c) u_8}{4c^2},\\
v_8' &= -\frac{(c+1)N}{4c^2},
\end{align}
\end{subequations}
where $\mathcal{U}$ is a known polynomial in $c, n, N, \alpha$ which we omit for conciseness. In the other chart, we make the change of variables then substitute $V_8=0$ to obtain
\begin{subequations} \label{U8V8zero}
\begin{align}
U_8' &= \frac{ U_8 \left( \mathcal{V}(c,n,N,\alpha) -N (3c^2 N-3N+10c) U_8 \right) }{4c^2 N} ,\\
V_8' &= - \frac{(c+1)N}{4c^2 U_8},
\end{align}
\end{subequations}
where similarly $\mathcal{V}$ is a known polynomial in $c, n, N, \alpha$. From the above results, we deduce that on the part of $F_8$ where $U_8 \neq 0$ the vector field is regular, and there are no more indeterminacies left to resolve. 

After the eight blowups of the $\p^1 \times \p^1$-fiber over $c$, we obtain a rational surface $\X_c$. 
We denote the group of divisors $\Div(\X_c)$, whose elements are formal integer sums of closed irreducible codimension one subvarieties of $\X_c$, the quotient of which by the subgroup $\operatorname{P}(\X_c)$ of \emph{principal divisors} (i.e by the relation of \emph{linear equivalence}) is the divisor class group 
\begin{equation}
\Cl(\X_c) = \Div(\X_c) / \operatorname{P}(\X_c).
\end{equation}
As $\X_c$ is smooth, this is isomorphic to the \emph{Picard group} (or \emph{Picard lattice}) $\Pic(\X_c)$, whose elements are line bundles on $\X_c$ with group operation being tensor product. We can write this as 
\begin{equation}
\Pic(\X_c) \cong \Cl(\X_c) = \Z \h_x + \Z \h_y + \Z \F_1 + \cdots \Z \F_8,
\end{equation}
where $\h_x, \h_y$ are the classes of total transforms of curves on $\p^1 \times \p^1$ of constant $x$ (or $X$) and $y$ (or $Y$) respectively, and $\F_i = [ F_i]$ is the class of the exceptional divisor of the blowup of $q_i$ (or more precisely its total transform under any further blowups of points on $F_i$). 
The Picard group $\Pic(\X_c)$ is equipped with the symmetric bilinear \emph{intersection form} defined by 
\begin{equation}\label{eq:int-form}
\begin{gathered}
\mathcal{H}_{x}\cdot \mathcal{H}_{x} = \mathcal{H}_{y}\cdot \mathcal{H}_{y} = \mathcal{H}_{x}\cdot \mathcal{F}_{i} = 
\mathcal{H}_{y}\cdot \mathcal{F}_{j} = 0, \\
\mathcal{H}_{x}\cdot \mathcal{H}_{y} = 1,\qquad  \mathcal{F}_{i}\cdot \mathcal{F}_{j} = - \delta_{ij},
\end{gathered}
\end{equation}
on the generators, extended by symmetry and linearity.
We give a schematic representation of the configuration of points which were blown up and also the resulting surface in \autoref{fig:pointconfiglaguerre}, in which blue curves represent inaccessible divisors.
We also collect the locations in coordinates of the eight points in \autoref{pointlocationsLaguerre}.

\begin{figure}[ht] 
	\begin{tikzpicture}[scale=.85,>=stealth,basept/.style={circle, draw=red!100, fill=red!100, thick, inner sep=0pt,minimum size=1.2mm}]
		\begin{scope}[xshift = -6cm]
			\draw [black, line width = 1pt] 	(4.1,2.5) 	-- (-0.5,2.5)	node [left]  {$y=\infty$} node[pos=0, right] {$$};
			\draw [black, line width = 1pt] 	(0,3) -- (0,-1)			node [below] {$x=0$}  node[pos=0, xshift=-7pt] {$$};
			\draw [black, line width = 1pt] 	(3.6,3) -- (3.6,-1)		node [below]  {$x=\infty$} node[pos=0, above, xshift=7pt] {$$};
			\draw [black, line width = 1pt] 	(4.1,-.5) 	-- (-0.5,-0.5)	node [left]  {$y=0$} node[pos=0, right] {$$};

			\node (p1) at (0,-0.5) [basept,label={[xshift=10pt, yshift = 0 pt] $q_{1}$}] {};
			\node (p2) at (0,1.3) [basept,label={[xshift=10pt, yshift = -10 pt] $q_{2}$}] {};
			\node (p3) at (3.6,0.4) [basept,label={[xshift=-10pt, yshift=-10pt] $q_{3}$}] {};
			\node (p4) at (3.6,2.5) [basept,label={[xshift=-10pt,yshift=0pt] $q_{4}$}] {};
			\node (p5) at (4.1,3.3) [basept,label={[xshift=0pt, yshift = 0 pt] $q_{5}$}] {};
			\node (p6) at (4.8,3.3) [basept,label={[xshift=0pt, yshift = 0 pt] $q_{6}$}] {};
			\node (p7) at (5.5,3.3) [basept,label={[xshift=0pt, yshift = 0 pt] $q_{7}$}] {};
			\node (p8) at (6.2,3.3) [basept,label={[xshift=0pt, yshift = 0 pt] $q_{8}$}] {};
			\draw [line width = 0.8pt, ->] (p5) -- (p4);
			\draw [line width = 0.8pt, ->] (p6) -- (p5);
			\draw [line width = 0.8pt, ->] (p7) -- (p6);
			\draw [line width = 0.8pt, ->] (p8) -- (p7);
		\end{scope}
	
		\draw [->] (2.5,1.5)--(0.5,1.5) node[pos=0.5, below] {$\text{Bl}_{q_1\dots q_8}$};
	
		\begin{scope}[xshift = 4.5cm]
			\draw [blue, line width = 1pt] 	(2.8,2.5) 	-- (-0.5,2.5)	node [left]  {$D_4$} node[pos=0, right] {};
			\draw [blue, line width = 1pt] 	(0,3) -- (0,-1)			node [below] {$D_5$}  node[pos=0, above, xshift=-7pt] {};
			\draw [blue, line width = 1pt] 	(3.6,1.7) -- (3.6,-1)		node [below]  {$D_1$} node[pos=0, above, xshift=7pt] {};

			\draw [red, line width = 1pt] 	(.3,0) 	-- (-1,-.5)	 node [left] {} node[ below] {$F_1$};
			
			\draw [red, line width = 1pt] 	(.3,1.3) 	-- (-1,.8)	 node [left] {} node[ below] {$F_2$};
			
			\draw [red, line width = 1pt] 	(3.3,0.4) 	-- (4.6,-.1)	 node [left] {} node[right] {$F_3$};
			
			\draw [blue, line width = 1pt]	(4,.9) -- (2,2.9)  node [ above ] {$D_2$};
			\draw [blue, line width = 1pt]	(2.8,1.7) -- (4.2,3.1) node[above right] {$D_3$};
			\draw [blue, line width = 1pt]	(4.2,2.7) -- (2.8,4.1) node[above left] {$D_6$};
			\draw [blue, line width = 1pt]	(2.8,3.7) -- (4.1,5)  node [above right] {$D_0$};
			\draw [red, line width = 1pt]	(4.1,4.6) -- (3,5.7) node [above] {$F_8$};

%

		\end{scope}
	\end{tikzpicture}
	\caption{Point configuration and surface for the semi-classical Laguerre weight}
	\label{fig:pointconfiglaguerre}

\end{figure}

\begin{figure}[ht]
\begin{equation*}
\begin{gathered}
q_1 : (x,y) = (0,0),  \quad q_2 : (x,y) = (0, \alpha/N),  \quad q_3 : (x, y) = (\infty,- n / N), \\
\\
\begin{aligned}
&q_4 : (x,y) = (\infty, \infty) ~ \leftarrow  &~   &q_5 : (U_4, V_4) = (x/y, 1/x) = (0,0) \\
&  & &  \uparrow  & \\
&  & & q_6 : (u_5,v_5) = (U_4/V_4, V_4) = (-2c, 0)  \\
&  & &  \uparrow  & \\
&  & & q_7 : (u_6,v_6) = \left( (u_5+2c)/v_5 , v_5 \right) = (2c(c-1), 0)  \\
&  & &  \uparrow  & \\
&  & & q_8 : (u_7,v_7) =\left( (u_6 -2c(c-1))/v_6 , v_6 \right) \\
&  & & \quad \quad ~~~~~~~~~ = ( - 2c\left( N+ c^2 N + 2c( \alpha - N + n +1 ) / N \right), 0) 
\end{aligned}
\end{gathered}
\end{equation*}

\caption{Point locations for the semi-classical Laguerre weight}
 \label{pointlocationsLaguerre}
 \end{figure}

The only step remaining in the construction of a space of initial conditions for the system \eqref{differential} is to identify and remove the inaccessible divisors from each fiber $\X_c$. 
In our earlier calculations, we identified the following inaccessible divisors: the proper transform of the line $\left\{x=0 \right\}$, which is the divisor $H_x - F_1- F_2 \in \Div(\X_c)$, the proper transform of the line $\left\{X=0\right\}$, which is $H_x - F_3 - F_4$, and the proper transform of the line $\left\{Y=0 \right\}$, which is $H_y - F_4- F_5$ (as $q_5$ was found at the intersection of $F_4$ and the proper transform of $\left\{Y=0 \right\}$ under the blowup of $q_4$). Further calculations in charts reveal that the vector field diverges everywhere in $\pi^{-1}(q_4)$ except for the part of $F_8$ where $U_8 \neq 0$, so we also have the following inaccessible divisors: $F_4 - F_5$, $F_5 - F_6$, $F_6- F_7$, $F_7 - F_8$. 
We denote these by $D_i$ as follows:
\begin{equation} \label{DiLaguerre}
\begin{aligned}
D_0  &= F_7-F_8, 		&\quad  	&D_4 = H_y - F_4 - F_5, \\
D_1 &= H_x - F_3 - F_4, 	&\quad 	&D_5 = H_x - F_1 - F_2, \\
D_2 &= F_4 - F_5, 		&\quad 	&D_6 = F_6 - F_7, \\
D_3 &= F_5 - F_6.
\end{aligned}
\end{equation}
We remove the union of these curves $D_{\operatorname{red}} = \bigcup_{i=0}^{6} D_i $ from the surface $\X_c$ to arrive at a bundle over $B = \C \backslash \{0\}$ with fiber over $c$ given by $\X_c \backslash D_{\operatorname{red}}$, which we denote
\begin{equation}
\begin{gathered}
\rho : E \longrightarrow B, \\
\rho^{-1}(c) = \X_c \backslash D_{\operatorname{red}}.
\end{gathered}
\end{equation}
This bundle admits a \emph{uniform foliation} \cite{Ok79, T1} by solution curves transverse to the fibres, and each fibre can be regarded as a space of initial conditions for the system \eqref{differential}.

\subsection{The surface type}
We now perform Step 2 of the identification procedure, showing that $\X_c$ constructed above is an example of the surfaces appearing in Sakai's theory, and determine its type in the classification. 
This is done by identifying an \emph{anticanonical divisor} of the surface $\X_c$, which is by definition the pole divisor of a rational 2-form on $\X_c$,
This 2-form will also provide the symplectic structure of the resulting Hamiltonian system on the bundle $E$ (see \autoref{sect333}).
As we are dealing with an eight-point blowup of $\p^1 \times \p^1$ the class in $\Pic(\X_c)$ of the anticanonical divisor should be 
\begin{equation}
- \mathcal{K}_{\X_c} = 2 \h_x + 2 \h_y - \F_1 - \F_2 - \F_3 - \F_4 - \F_5 - \F_6 - \F_7 - \F_8.  
\end{equation} 
This divisor can be identified from the configuration of the points $q_1,\dots, q_8$ by finding a biquadratic curve on $\p^1 \times \p^1$ on which these points lie.
In this case we can take the curve as the pole divisor of the rational 2-form
\begin{equation} \label{omegaLaguerre}
\omega = k \frac{dx \wedge dy}{x} = - k \frac{dX \wedge dy}{X} = - k \frac{dx \wedge dY}{Y^2} = k \frac{dX \wedge dY}{X Y^2},
\end{equation}
where $k$ is a nonzero constant which we allow to be arbitrary at this stage.
The pole divisor of $\omega$ passes through $q_1,\dots, q_8$, including the infinitely near points in the cascade over $q_4$. 
The proper transform of the biquadratic curve in $\p^1\times\p^1$ under the blowups (or equivalently the pole divisor of the 2-form $\omega$ lifted to $\X_c$) gives an anticanonical divisor $D \in | -\mathcal{K}_{\X_c}|$, whose irreducible components are exactly the inaccessible divisors identified when we resolved the indeterminacies of the system above. 
This is given in terms of $D_i$ introduced in \eqref{DiLaguerre} as follows:
\begin{equation} \label{ACdivP4}
D = D_0 + D_1 +2 D_2 + 3 D_3 + 2 D_4+D_5 + 2 D_6.
\end{equation}

Sakai defined a \emph{generalized Halphen surface} to be a complex nonsingular projective surface with an anticanonical divisor of \emph{canonical type}. That is, a surface $\X$ with an anticanonical divisor $D \in | -\mathcal{K}_{\X}|$ whose decomposition $D= \sum_i m_i D_i$ into irreducible components is such that $[D_i] \cdot \mathcal{K}_{\X}=0$ for all $i$, where $[D_i]$ is the class of $D_i$ in $\Pic(\X)$.
A generalized Halphen surface has either $\dim | -\mathcal{K}_{\X}| = 1$, in which case we have a pencil of anticanonical divisors and $\X$ is a rational elliptic surface, or $\dim | -\mathcal{K}_{\X}| = 0$, in which case there is a \emph{unique} anticanonical divisor and we call $\X$ a \emph{Sakai surface}, which is the kind associated with Painlev\'e equations.
 
Sakai surfaces are classified firstly according to an affine Dynkin diagram determined by the matrix $(A_{ij})$ giving the intersection configuration of the components of the anticanonical divisor $[D_i] \cdot [D_j] = A_{ij}$.
The type $R$ of the affine Dynkin diagram associated with this intersection configuration is the \emph{surface type}, and the classes $\delta_i = [D_i]$ in $\Pic(\X_c)$ of the irreducible components of the anticanonical divisor form a basis of simple roots for an affine root system, which we call the \emph{surface root basis}, and whose $\Z$-span in $\Pic(\X_c)$ is the \emph{surface root lattice}, denoted 
\begin{equation}
Q(R) = Q(E_6^{(1)}) = \operatorname{span}_{\Z} \left\{ \delta_0, \delta_1, \delta_2, \delta_3, \delta_4, \delta_5, \delta_6 \right\} \subset \Pic(\X_c). 
\end{equation}
\begin{proposition}
The surface $\X_c$ constructed above is a Sakai surface, with surface type $E_6^{(1)}$.
\end{proposition}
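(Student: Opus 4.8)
The plan is to verify the three conditions that make $\X_c$ a Sakai surface of type $E_6^{(1)}$: that the divisor $D$ of \eqref{ACdivP4} represents $-\mathcal{K}_{\X_c}$, that it is of canonical type, and that $\dim|-\mathcal{K}_{\X_c}| = 0$; along the way the intersection configuration of the components $D_0,\dots,D_6$ is computed and matched with the affine Dynkin diagram $E_6^{(1)}$.

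First I would record the canonical class. As $\X_c$ is an eight-point blowup of $\p^1\times\p^1$ (the points $q_4,\dots,q_8$ being infinitely near), the blowup formula gives $\mathcal{K}_{\X_c} = -2\h_x - 2\h_y + \F_1 + \cdots + \F_8$, so $-\mathcal{K}_{\X_c} = 2\h_x + 2\h_y - \F_1 - \cdots - \F_8$. Substituting the classes \eqref{DiLaguerre} into $D_0 + D_1 + 2D_2 + 3D_3 + 2D_4 + D_5 + 2D_6$ and collecting coefficients shows $[D] = -\mathcal{K}_{\X_c}$; equivalently, $D$ is the proper transform of the pole divisor of the $2$-form $\omega$ in \eqref{omegaLaguerre}, which is anticanonical by construction. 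Hence $\X_c$ carries an effective anticanonical divisor $D$.

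Next I would compute the symmetric matrix $A_{ij} = \delta_i \cdot \delta_j$, $\delta_i = [D_i]$, from the intersection form \eqref{eq:int-form} and the expressions \eqref{DiLaguerre}. One finds $\delta_i^2 = -2$ for each $i$, while the only nonzero off-diagonal products are $\delta_0\cdot\delta_6 = \delta_1\cdot\delta_2 = \delta_2\cdot\delta_3 = \delta_3\cdot\delta_4 = \delta_3\cdot\delta_6 = \delta_4\cdot\delta_5 = 1$. The associated graph therefore has $D_3$ as its unique trivalent vertex, joined to $D_2$, $D_4$ and $D_6$, with pendant vertices $D_1$, $D_5$, $D_0$ attached to these respectively: this is exactly the affine Dynkin diagram $E_6^{(1)}$, consistent with \autoref{fig:pointconfiglaguerre}. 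Moreover $-\mathcal{K}_{\X_c} = [D] = \delta_0 + \delta_1 + 2\delta_2 + 3\delta_3 + 2\delta_4 + \delta_5 + 2\delta_6$ is precisely the null root of this affine root system, so $\delta_i \cdot (-\mathcal{K}_{\X_c}) = 0$ for every $i$, i.e. $D_i \cdot \mathcal{K}_{\X_c} = 0$; thus $D$ is an anticanonical divisor of canonical type and $\X_c$ is a generalized Halphen surface whose surface-root lattice $Q(R) = \operatorname{span}_{\Z}\{\delta_0,\dots,\delta_6\}$ is of type $E_6^{(1)}$.

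The one step that requires genuine verification rather than formal computation is the distinction between the Sakai case $\dim|-\mathcal{K}_{\X_c}| = 0$ and the rational elliptic case $\dim|-\mathcal{K}_{\X_c}| = 1$; this is where I expect the main subtlety. The linear system $|-\mathcal{K}_{\X_c}|$ is identified with the system of biquadratic curves on $\p^1\times\p^1$ passing through the configuration $q_1,\dots,q_8$ (with the prescribed infinitely-near incidences described in \autoref{section31}), a linear subsystem of a $\p^8$ cut out by eight conditions depending polynomially on $c, n, N, \alpha$. I would check directly that these eight conditions are independent -- equivalently, that $q_1,\dots,q_8$ do not form the base locus of a pencil of biquadratics -- so that the biquadratic through them, hence the anticanonical divisor of $\X_c$, is unique. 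This gives $\dim|-\mathcal{K}_{\X_c}| = 0$, and combined with the previous paragraph shows $\X_c$ is a Sakai surface of surface type $E_6^{(1)}$, as claimed.
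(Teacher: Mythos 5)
Your proposal is correct and follows essentially the same route as the paper: identify $[D]=2\h_x+2\h_y-\F_1-\cdots-\F_8=-\mathcal{K}_{\X_c}$, compute the intersection matrix of the $\delta_i$ to recognise the $E_6^{(1)}$ generalized Cartan matrix, check canonical type, and reduce $\dim|-\mathcal{K}_{\X_c}|=0$ to the uniqueness of the biquadratic curve through $q_1,\dots,q_8$ (with the infinitely near conditions), exactly as in the paper's proof. The only cosmetic difference is that you obtain $\delta_i\cdot\mathcal{K}_{\X_c}=0$ from the null-root decomposition of $[D]$ rather than by direct computation, which is a perfectly valid shortcut.
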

\begin{proof}
It can be verified by direct calculation that $D$ given by \eqref{ACdivP4} is an anticanonical divisor;  
 with the components as in \autoref{fig:d-roots-hw}, we see that 
\begin{equation}
\begin{aligned}
[D] &= \delta_0 + \delta_1 +2 \delta_2 + 3 \delta_3 + 2 \delta_4+ \delta_5 + 2 \delta_6 \\
&= 2 \h_x + 2 \h_y - \F_1 - \F_2 - \F_3 - \F_4 - \F_5 - \F_6 - \F_7 - \F_8,
\end{aligned}
\end{equation}
so $D \in | -\mathcal{K}_{\X_c} |$. 
The fact that $\dim | -\mathcal{K}_{\X}| = 0$ can be verified by noting that the pole divisor of the symplectic form \eqref{omegaP4} is the unique bi-quadratic curve in $\p^1 \times \p^1$ passing through all eight points $q_1,\dots, q_8$. 
The fact that this anticanonical divisor is of canonical type can be verified by direct calculation of $\delta_i \cdot \mathcal{K}_{\X_c}$ using the formulae \eqref{eq:int-form}.
Finally, we compute the intersection configuration of the classes $\delta_i$ to be given by
\begin{equation}
- (\delta_i \cdot \delta_j) = \left(
\begin{array}{ccccccc}
 2 & 0 & 0 & 0 & 0 & 0 & -1 \\
 0 & 2 & -1 & 0 & 0 & 0 & 0 \\
 0 & -1 & 2 & -1 & 0 & 0 & 0 \\
 0 & 0 & -1 & 2 & -1 & 0 & -1 \\
 0 & 0 & 0 & -1 & 2 & -1 & 0 \\
 0 & 0 & 0 & 0 & -1 & 2 & 0 \\
 -1 & 0 & 0 & -1 & 0 & 0 & 2 \\
\end{array}
\right),
\end{equation}
which is the generalized Cartan matrix of affine type $E_6^{(1)}$ \cite{KAC}, whose Dynkin diagram we give in \autoref{fig:d-roots-hw}.
\end{proof}

\begin{figure}[H]
\begin{equation*}\label{eq:d-roots-hw}
	\raisebox{-32.1pt}{\begin{tikzpicture}[
			elt/.style={circle,draw=black!100,thick, inner sep=0pt,minimum size=2mm}]
		\path 	(-2,0) 	node 	(d1) [elt, label={[xshift=-0pt, yshift = -25 pt] $\delta_{1}$} ] {}
		        (-1,0) node 	(d2) [elt, label={[xshift=-0pt, yshift = -25 pt] $\delta_{2}$} ] {}
		        ( 0,0) 	node  	(d3) [elt, label={[xshift=0pt, yshift = -25 pt] $\delta_{3}$} ] {}
		        ( 1,0) 	node  	(d4) [elt, label={[xshift=0pt, yshift = -25 pt] $\delta_{4}$} ] {}
		        ( 2,0) node 	(d5) [elt, label={[xshift=0pt, yshift = -25 pt] $\delta_{5}$} ] {}
		        ( 0,1) node 	(d6) [elt, label={[xshift=10pt, yshift = -10 pt] $\delta_{6}$} ] {}
		        ( 0,2) node 	(d0) [elt, label={[xshift=10pt, yshift = -10 pt] $\delta_{0}$} ] {};
		\draw [black,line width=1pt ] (d1) -- (d2) -- (d3) -- (d4)  -- (d5);
		\draw [black,line width=1pt ] (d3) -- (d6) -- (d0) ;
	\end{tikzpicture}} \qquad \qquad
			\begin{alignedat}{2}
			\delta_{0} &= \F_7-\F_8, &\qquad  \delta_{4} &= \h_y - \mathcal{F}_{4} - \mathcal{F}_{5},\\
			\delta_{1} &= \h_x - \F_3-\F_4, &\qquad  \delta_{5} &= \mathcal{H}_{x} -\F_1 - \F_2,\\
			\delta_{2} &= \F_4 - \F_5, &\qquad  \delta_{6} &= \F_6-\F_7,\\
			\delta_{3} &= \F_5 - \F_6.
			\end{alignedat}
\end{equation*}
	\caption{The Surface Root Basis for the semi-classical Laguerre weight}
	\label{fig:d-roots-hw}
\end{figure}

\subsection{Standard model of $E_6^{(1)}$-surfaces and $\pain{IV}$}

At this stage we have determined the surface type for the differential system \eqref{differential}, which tells us which Painlev\'e equation it should be transformable to, namely the fourth Painlev\'e equation $\pain{IV}$.  
For this we require the standard model of Sakai surfaces of type $E_6^{(1)}$ as provided by \cite{KajNouYam:2017:GAOPE}, with which we aim to identify those constructed in the previous section. 
These surfaces provide the space of initial conditions for the fourth Painlev\'e equation in the Hamiltonian form
\begin{equation} \label{hamP4}
\left\{
\begin{gathered}
\frac{dq}{dt} = \frac{\partial H}{\partial p} = q( 2p - q - t) - a_1, \\
 \frac{dp}{dt} = -\frac{\partial H}{\partial q} = p( 2q - p + t) + a_2 ,
\end{gathered}
\right.
\quad 
H(q,p,t) = q p (p - q - t)- a_1 p - a_2 q,
\end{equation}
where $a_1, a_2$ are free complex parameters. This system is equivalent to $\pain{IV}$ for $q(t)$ in the usual scalar form
\begin{equation}
q'' = \frac{1}{2q} (q')^2 + \frac{3}{2} q^3 + 2 t q^2 +\left( \alpha + \frac{t^2}{2} \right) + \frac{\beta}{2 q}, \quad \text{where} \quad \alpha = a_2 - a_0, ~ \beta = - a_1^2.
\end{equation}
Here $a_0$ is an extra parameter which will also appear in the point configuration, which is determined by the normalisation 
\begin{equation} \label{normalisationP4}
a_0 + a_1 + a_2 = 1.
\end{equation}

\subsubsection{Point configuration and anticanonical divisor}

To construct the surfaces, we begin with affine coordinates $(q,p)$, from which we introduce $Q=1/q, P=1/p$ similarly to in \autoref{section31}, so we have $\p^1 \times\p^1$  covered by the four affine charts $(q,p)$, $(Q,p)$, $(q,P)$ and $(Q,P)$. 
There are three points in $\p^1 \times \p^1$ initially identifiable as requiring blowups. For two of these, we require two blowups to resolve the singularities of the differential system, while the third requires four successive blowups. We show the configuration of these points in \autoref{fig:pointconfigE61}, with their locations in coordinates in \autoref{pointlocationsP4}. 
We use the same convention for introducing blowup coordinates as above, with two charts $(\tilde{u}_i , \tilde{v}_i), (\tilde{U}_i , \tilde{V}_i)$ introduced after the blowup of $p_i$, in which the exceptional divisor is given by $\tilde{v}_i = 0$, $\tilde{V}_i = 0$ respectively. 

We denote the surface obtained through these eight blowups by $\tilde{\X}_t$, with exceptional divisors arising from the eight blowups $E_1 ,\dots E_8 \in \Div(\tilde{\X}_t)$. 
The inaccessible divisors, which we denote by $\tilde{D}_i$, are given by 
\begin{equation} \label{DiP4}
\begin{aligned}
\tilde{D}_0  &= E_7-E_8, 			&\quad  		&\tilde{D}_4 = H_p - E_3 - E_5, \\
\tilde{D}_1 &= E_1 - E_2, 			&\quad 		&\tilde{D}_5 = E_3 - E_4, \\
\tilde{D}_2 &= H_q - E_1 - E_5, 	&\quad 		&\tilde{D}_6 = E_6 - E_7, \\
\tilde{D}_3 &= E_5 - E_6.
\end{aligned}
\end{equation}
We write the Picard lattice of the surface $\tilde{\X}_t$ in terms of generators as 
\begin{equation}
\Pic(\tilde{\X}_t) \cong \Z \h_q + \Z \h_p + \Z \E_1 + \cdots + \Z \E_8,
\end{equation}
where $\h_q, \h_p$ are the classes of total transforms of curves on $\p^1 \times \p^1$ of constant $q$ (or $Q$) and $p$ (or $P$) respectively, and $\E_i = [ E_i]$ are the exceptional classes arising from the blowups.
Again the inaccessible divisors give a representative of the anticanonical divisor class, given in terms of the surface roots $\tilde{\delta}_i = [\tilde{D}_i]$ as in \autoref{fig:d-roots-P4} by 
\begin{equation}
\begin{aligned}
-\mathcal{K}_{\tilde{\X}_t} &= \tilde{\delta}_0 + \tilde{\delta}_1 +2 \tilde{\delta}_2 + 3 \tilde{\delta}_3 + 2 \tilde{\delta}_4+ \tilde{\delta}_5 + 2 \tilde{\delta}_6 \\
&= 2 \h_q + 2 \h_p - \E_1 - \E_2 - \E_3 - \E_4 - \E_5 - \E_6 - \E_7 - \E_8.
\end{aligned}
\end{equation}

\begin{figure}[H]
	\begin{tikzpicture}[scale=.95,>=stealth,basept/.style={circle, draw=red!100, fill=red!100, thick, inner sep=0pt,minimum size=1.2mm}]
		\begin{scope}[xshift = -4cm]
			\draw [black, line width = 1pt] 	(4.1,2.5) 	-- (-0.5,2.5)	node [left]  {$p=\infty$} node[pos=0, right] {$$};
			\draw [black, line width = 1pt] 	(0,3) -- (0,-1)			node [below] {$q=0$}  node[pos=0, above, xshift=-7pt] {} ;
			\draw [black, line width = 1pt] 	(3.6,3) -- (3.6,-1)		node [below]  {$q=\infty$} node[pos=0, above, xshift=7pt] {};
			\draw [black, line width = 1pt] 	(4.1,-.5) 	-- (-0.5,-0.5)	node [left]  {$p=0$} node[pos=0, right] {$$};

			\node (p1) at (3.6,-.5) [basept,label={[xshift=-10pt, yshift = 0 pt] $p_{1}$}] {};
			\node (p2) at (4.3,0.3) [basept,label={[xshift=10pt, yshift = -10 pt] $p_{2}$}] {};
			\node (p3) at (0,2.5) [basept,label={[yshift=-20pt, xshift=+10pt] $p_{3}$}] {};
			\node (p4) at (0.65,3.3) [basept,label={[xshift=0pt, yshift = 0 pt] $p_{4}$}] {};
			\node (p5) at (3.6,2.5) [basept,label={[xshift=-10pt,yshift=0pt] $p_{5}$}] {};
			\node (p6) at (4.1,3.3) [basept,label={[xshift=0pt, yshift = 0 pt] $p_{6}$}] {};
			\node (p7) at (4.8,3.3) [basept,label={[xshift=0pt, yshift = 0 pt] $p_{7}$}] {};
			\node (p8) at (5.5,3.3) [basept,label={[xshift=0pt, yshift = 0 pt] $p_{8}$}] {};
			\draw [line width = 0.8pt, ->] (p2) -- (p1);
			\draw [line width = 0.8pt, ->] (p4) -- (p3);
			\draw [line width = 0.8pt, ->] (p6) -- (p5);
			\draw [line width = 0.8pt, ->] (p7) -- (p6);
			\draw [line width = 0.8pt, ->] (p8) -- (p7);
		\end{scope}
	
		\draw [->] (4,1.5)--(2,1.5) node[pos=0.5, below] {$\text{Bl}_{p_1\dots p_8}$};
	
		\begin{scope}[xshift = 5.5cm]
			\draw [blue, line width = 1pt] 	(2.8,2.5) 	-- (-0.2,2.5)	node [left]  {$\tilde{D}_4$} node[pos=0, right] {};
			\draw [blue, line width = 1pt] 	(3.6,1.7) -- (3.6,-1)		node [below]  {$\tilde{D}_2$} node[pos=0, above, xshift=7pt] {};

			\draw [blue, line width = 1pt] 	(.8,2.2) 	-- (.8,3.9)	 node [left] {} node[above] {$\tilde{D}_5$};
			\draw [red, line width = 1pt] 	(0,3.6) 	-- (1.6,3.6)	 node [left] {} node[pos=0, left] {$E_4$};
			
			\draw [blue, line width = 1pt] 	(3.3,0) node[left]{}	-- (4.9,0)	 node [left] {} node[right] {$\tilde{D}_1$};
			\draw [red, line width = 1pt] 	(4.6,-0.8) 	-- (4.6,.8)	 node [left] {} node[pos=0, below] {$E_2$};
			
			\draw [blue, line width = 1pt]	(4,.9) -- (2,2.9) node[ pos=.6, left] {$\tilde{D}_3$};
			\draw [blue, line width = 1pt]	(2.8,1.7) -- (4.2,3.1) node [right] {$\tilde{D}_6$} ;
			\draw [blue, line width = 1pt]	(4.2,2.7) -- (2.8,4.1) node [above] {$\tilde{D}_0$};
			\draw [red, line width = 1pt]	(2.8,3.7) -- (3.8,4.7)  node [below right] {$E_8$};

%

		\end{scope}
	\end{tikzpicture}
	\caption{Point configuration for the standard model of $E_6^{(1)}$-surfaces}
	\label{fig:pointconfigE61}
\end{figure}

\begin{figure}[H]
\begin{equation*} 
\begin{aligned}
&p_1 : (q,p) = (\infty,0) &~ 	\leftarrow &~   &~  &p_2 : (\tilde{u}_1, \tilde{v}_1) = ( q p, 1/q) = (-a_2, 0) \\
&p_3 : (q, p) = (0,\infty) &~ 	\leftarrow &~   &~  &p_4 : (\tilde{u}_3, \tilde{v}_3) = ( q p , 1/p) = (a_1, 0)    \\
&p_5 : (q,p) = (\infty,\infty) &~ 	\leftarrow &~   &~  &p_6 : (\tilde{u}_5, \tilde{v}_5) =  (p/q, 1/p) = (1,0)  \\
& & & & & \uparrow \\
& & & & &p_7 : (\tilde{u}_6, \tilde{v}_6) = \left( (\tilde{u}_5 - 1)/\tilde{v}_5, \tilde{v}_5\right) = (t,0) \\
& & & & & \uparrow \\
& & & & &p_8 : (\tilde{u}_7, \tilde{v}_7 ) = \left( (\tilde{u}_6 - t ) / \tilde{v}_6 , \tilde{v}_6 \right) = (t^2 - a_0,0) 
\end{aligned}
\end{equation*}
\caption{Point locations for the standard model of $E_6^{(1)}$-surfaces}
\label{pointlocationsP4}
\end{figure}


\begin{figure}[H]
\begin{equation*}\label{eq:d-roots-hw}
	\raisebox{-32.1pt}{\begin{tikzpicture}[
			elt/.style={circle,draw=black!100,thick, inner sep=0pt,minimum size=2mm}]
		\path 	(-2,0) 	node 	(d1) [elt, label={[xshift=-0pt, yshift = -25 pt] $\tilde{\delta}_{1}$} ] {}
		        (-1,0) node 	(d2) [elt, label={[xshift=-0pt, yshift = -25 pt] $\tilde{\delta}_{2}$} ] {}
		        ( 0,0) 	node  	(d3) [elt, label={[xshift=0pt, yshift = -25 pt] $\tilde{\delta}_{3}$} ] {}
		        ( 1,0) 	node  	(d4) [elt, label={[xshift=0pt, yshift = -25 pt] $\tilde{\delta}_{4}$} ] {}
		        ( 2,0) node 	(d5) [elt, label={[xshift=0pt, yshift = -25 pt] $\tilde{\delta}_{5}$} ] {}
		        ( 0,1) node 	(d6) [elt, label={[xshift=10pt, yshift = -10 pt] $\tilde{\delta}_{6}$} ] {}
		        ( 0,2) node 	(d0) [elt, label={[xshift=10pt, yshift = -10 pt] $\tilde{\delta}_{0}$} ] {};
		\draw [black,line width=1pt ] (d1) -- (d2) -- (d3) -- (d4)  -- (d5);
		\draw [black,line width=1pt ] (d3) -- (d6) -- (d0) ;
	\end{tikzpicture}} \qquad \qquad
			\begin{alignedat}{2}
			\tilde{\delta}_{0} &= \E_7-\E_8, &\qquad  \tilde{\delta}_{4} &= \h_p - \mathcal{E}_{3} - \mathcal{E}_{5},\\
			\tilde{\delta}_{1} &= \E_1-\E_2, &\qquad  \tilde{\delta}_{5} &= \E_3 - \E_4,\\
			\tilde{\delta}_{2} &= \h_q - \E_1 - \E_5, &\qquad  \tilde{\delta}_{6} &= \E_6-\E_7,\\
			\tilde{\delta}_{3} &= \E_5 - \E_6.
			\end{alignedat}
\end{equation*}
	\caption{The Surface Root Basis for the standard model of $E_6^{(1)}$-surfaces}
	\label{fig:d-roots-P4}
\end{figure}

\subsubsection{Period mapping and root variables} \label{sec332}

The locations of the eight points which were blown up to obtain the surface $\tilde{\X}_t$ depend on, in addition to the independent variable $t$, three parameters $a_0, a_1, a_2$ subject to the normalisation \eqref{normalisationP4}. 
These parameters are the \emph{root variables} for the surface $\tilde{\X}_t$, which correspond to a choice of root basis for another affine root lattice in $\Pic(\tilde{\X}_c)$, which we call the \emph{symmetry root lattice}. 
This is given by the orthogonal complement in $\Pic(\tilde{\X}_c)$ (with respect to the intersection form) of the surface root lattice $Q(R)$, and is denoted by $Q(R^{\perp})$, where the \emph{symmetry type} of the surface is the type $R^{\perp}$ of its affine Dynkin diagram.
With the choice of \emph{symmetry root basis} as in \cite{KajNouYam:2017:GAOPE} we have 
\begin{equation}
Q(R^{\perp}) = Q(A_2^{(1)}) = \operatorname{span}_{\Z} \left\{ \tilde{\alpha}_0, \tilde{\alpha}_1, \tilde{\alpha}_2 \right\} \subset \Pic(\tilde{\X}_t),
\end{equation} 
where the symmetry roots $\tilde{\alpha}_0, \tilde{\alpha}_1, \tilde{\alpha}_2$ are given in \autoref{fig:symmetryrootsP4}.

\begin{figure}[H]
\begin{equation*}\label{eq:d-roots-hw}
	\raisebox{-32.1pt}{\begin{tikzpicture}[
			elt/.style={circle,draw=black!100,thick, inner sep=0pt,minimum size=2mm}]
		\path 	( -1,0) 	node  	(a1) [elt, label={[xshift=0pt, yshift = -25 pt] $\tilde{\alpha}_{1}$} ] {}
		        ( 1,0) 	node  	(a2) [elt, label={[xshift=5pt, yshift = -25 pt] $\tilde{\alpha}_{2}$} ] {}
		        ( 0,1.3) node 	(a0) [elt, label={[xshift=10pt, yshift = -5 pt] $\tilde{\alpha}_{0}$} ] {};
		\draw [black,line width=1pt ] (a0) -- (a1)  -- (a2) -- (a0);
	\end{tikzpicture}} \qquad \qquad
			\begin{alignedat}{2}
			\tilde{\alpha}_{0} &= \h_q + \h_p - \E_5-\E_6 - \E_7 - \E_8, \\
			\tilde{\alpha}_{1} &= \h_q - \E_3-\E_4, \\
			\tilde{\alpha}_{2} &= \h_p - \E_1-\E_2,
			\end{alignedat}
\end{equation*}
	\caption{The Symmetry Root Basis for the standard model of $E_6^{(1)}$-surfaces}
	\label{fig:symmetryrootsP4}
\end{figure}

In order to define the period mapping, we choose a rational symplectic form on $\tilde{\X}_t$ whose pole divisor is the configuration of curves $\tilde{D}_i$ as in \eqref{DiP4}. Any such symplectic form is given in the affine $(q,p)$-chart by $\tilde{k} dq \wedge dp$, where $\tilde{k}$ is a nonzero constant which will be normalised later.
In order to calculate the value of the period mapping on the symmetry root basis, we will need to work with the symplectic form $\tilde{\omega}$ in a number of charts:
\begin{equation} \label{omegaP4}
\begin{aligned}
\tilde{\omega} &= \tilde{k} dq \wedge dp = - \tilde{k} \frac{dQ \wedge dp}{Q^2} = - \tilde{k} \frac{dq \wedge dP}{P^2} = \tilde{k} \frac{dQ \wedge dP}{Q^2 P^2} \\
&= \tilde{k} \frac{d \tilde{u}_1 \wedge d \tilde{v}_1}{\tilde{v}_1} = -\tilde{k} \frac{d \tilde{u}_3 \wedge d \tilde{v}_3}{\tilde{v}_3} = \tilde{k} \frac{d\tilde{u}_5 \wedge d \tilde{v}_5}{\tilde{u}_5^2 \tilde{v}_5^{3}}\\
&= \tilde{k} \frac{d \tilde{u}_6 \wedge \tilde{v}_6}{\tilde{v}_6^2(1 + \tilde{u}_6\tilde{v}_6)^2} = \tilde{k} \frac{d \tilde{u}_7 \wedge d \tilde{v}_7}{\tilde{v}_7( 1 + \tilde{v}_7 ( t + \tilde{u}_7 \tilde{v}_7 ))^2}.
\end{aligned}
\end{equation}
Choosing $\tilde{\omega}$ as above, the \emph{period mapping} is a linear map
\begin{equation}
\tilde{\chi} : Q(R^{\perp}) \longrightarrow \C, 
\end{equation}
which gives the \emph{root variables} corresponding to the symmetry root basis
\begin{equation}
a_i = \tilde{\chi}(\tilde{\alpha}_i).
\end{equation}
These are computed according to the following process for each $\tilde{\alpha}_i$ in the symmetry root basis (see \cite{Sak:2001:RSAWARSGPE} for details, or \cite{DT18, DFS19} for more examples of explicit calculations):
\begin{itemize}
\item First, express $\tilde{\alpha}_i$ as a difference of two effective divisors, $\tilde{\alpha}_i = [C_i^{1}] - [C_i^{0}]$, where $C_i^1$ and $C_i^0$ are irreducible curves;
\item Note the unique component $\tilde{D}_k$ of the anticanonical divisor such that $[\tilde{D}_k] \cdot [C_i^{1}] = [\tilde{D}_k] \cdot [C_i^{0}] = 1$. Denote the points where $\tilde{D}_k$ intersects the curves by $P_i = \tilde{D}_k \cap C_i^{0}$ and $Q_i = \tilde{D}_k \cap C_i^{1}$;
\item The period mapping can then be computed as
\begin{equation}
\tilde{\chi}(\tilde{\alpha}_i) = \tilde{\chi}([C_i^{1}] - [C_i^{0}]) = \int_{P_i}^{Q_i} \frac{1}{2 \pi \mathfrak{i}} \oint_{\tilde{D}_k} \tilde{\omega} = \int_{P_i}^{Q_i} \operatorname{res}_{\tilde{D}_k} \tilde{\omega}.
\end{equation}
\end{itemize}
For the surface $\tilde{\X}_t$ and the choice of symmetry root basis in \autoref{fig:symmetryrootsP4}, the root variables are provided by the following Lemma, which is proven by standard calculations using the method above.

\begin{lemma}

\begin{enumerate}[(a)]
\item The residue of the rational symplectic form $\tilde{\omega}$ given in charts by \eqref{omegaP4} along each of the irreducible components of the anticanonical divisor is given by
\begin{equation}
\begin{aligned}
\operatorname{res}_{\tilde{D}_0} \tilde{\omega} &= - \tilde{k} d \tilde{u}_7,  &\quad &\operatorname{res}_{\tilde{D}_1} \tilde{\omega} = -\tilde{k} d\tilde{u}_1, &\qquad &\operatorname{res}_{\tilde{D}_2} \tilde{\omega} = 0, \qquad \operatorname{res}_{\tilde{D}_3} \tilde{\omega} = 0,  \\
\operatorname{res}_{\tilde{D}_4} \tilde{\omega} &= 0,  &\quad &\operatorname{res}_{\tilde{D}_5} \tilde{\omega} = \tilde{k} d\tilde{u}_3,  &\quad &\operatorname{res}_{\tilde{D}_6} \tilde{\omega} =  0. 
\end{aligned}
\end{equation}
\item The values of the period mapping on the symmetry roots in \autoref{fig:symmetryrootsP4} are given by
\begin{equation}
\tilde{\chi}(\tilde{\alpha}_0) = - \tilde{k} a_0, \qquad  \tilde{\chi}(\tilde{\alpha}_1) = - \tilde{k} a_1, \qquad \tilde{\chi}(\tilde{\alpha}_2) = - \tilde{k} a_2.
\end{equation}
\item By normalising the symplectic form with $\tilde{k}=-1$, we have the parameters $a_i$ in the point configuration being the root variables for the surface $\tilde{\X}_t$ with the choice of symmetry root basis in \autoref{fig:symmetryrootsP4}.
This also yields the parameter normalisation 
\begin{equation}
a_0 + a_1 +a_2 = \tilde{\chi}(\tilde{\alpha}_0 + \tilde{\alpha}_1 + \tilde{\alpha}_2) = \tilde{\chi}(-\mathcal{K}_{\tilde{\X}_t}) = 1.
\end{equation}
\end{enumerate}

\end{lemma}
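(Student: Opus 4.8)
The plan is to establish the three parts in sequence, with (a) feeding directly into (b) and (c) following formally from (b). For part (a) I would treat the seven anticanonical components $\tilde{D}_0,\dots,\tilde{D}_6$ one at a time, in each case passing to a coordinate chart --- among those recorded in \eqref{omegaP4}, supplemented by further blowup charts read off from \autoref{pointlocationsP4} --- in which the component is a coordinate axis. The organising fact is that, since $\tilde{\omega}$ was chosen with pole divisor $-\mathcal{K}_{\tilde{\X}_t} = \tilde{D}_0 + \tilde{D}_1 + 2\tilde{D}_2 + 3\tilde{D}_3 + 2\tilde{D}_4 + \tilde{D}_5 + 2\tilde{D}_6$, it has a pole of order exactly $m_i$ along $\tilde{D}_i$. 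Hence for $\tilde{D}_2,\tilde{D}_3,\tilde{D}_4,\tilde{D}_6$, whose multiplicity is $\geq 2$, the contour integral $\tfrac{1}{2\pi\I}\oint$ around the component vanishes --- for instance in the $(\tilde{u}_5,\tilde{v}_5)$-chart one reads a pole of order three along $\tilde{D}_3 = \{\tilde{v}_5=0\}$ and order two along $\tilde{D}_2 = \{\tilde{u}_5=0\}$ straight from \eqref{omegaP4}, and similarly $\tilde{D}_6$ in the $(\tilde{u}_6,\tilde{v}_6)$-chart --- so $\operatorname{res}_{\tilde{D}_i}\tilde{\omega}=0$ in these four cases. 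For the three multiplicity-one components $\tilde{D}_0,\tilde{D}_1,\tilde{D}_5$ the pole is logarithmic and the Poincar\'e residue is read off directly from $\tilde{\omega}$ in the $(\tilde{u}_7,\tilde{v}_7)$-, $(\tilde{u}_1,\tilde{v}_1)$- and $(\tilde{u}_3,\tilde{v}_3)$-charts, giving $-\tilde{k}\,d\tilde{u}_7$, $-\tilde{k}\,d\tilde{u}_1$ and $\tilde{k}\,d\tilde{u}_3$ respectively.

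For part (b) I would apply the three-step recipe stated just before the Lemma to each symmetry root. For $\tilde{\alpha}_1 = \h_q - \E_3 - \E_4$, write $\tilde{\alpha}_1 = (\h_q - \E_3) - \E_4$, a difference of the classes of the proper transform $C^1$ of the line $\{q=0\}$ and of the exceptional curve $C^0 = E_4$; the unique anticanonical component meeting both of these once is $\tilde{D}_5 = \E_3 - \E_4$, and the two intersection points lie on $\tilde{D}_5$ at $\tilde{u}_3 = 0$ (with $\{q=0\}$) and $\tilde{u}_3 = a_1$ (with $E_4$, since $p_4$ is the point $(\tilde{u}_3,\tilde{v}_3)=(a_1,0)$ blown up to create $E_4$). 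Integrating $\operatorname{res}_{\tilde{D}_5}\tilde{\omega} = \tilde{k}\,d\tilde{u}_3$ from $\tilde{u}_3 = a_1$ to $\tilde{u}_3 = 0$ gives $\tilde{\chi}(\tilde{\alpha}_1) = -\tilde{k}\,a_1$. The root $\tilde{\alpha}_2 = \h_p - \E_1 - \E_2$ is handled identically, using $\tilde{D}_1$, the line $\{p=0\}$ and the point $p_2\colon(\tilde{u}_1,\tilde{v}_1)=(-a_2,0)$, to give $-\tilde{k}\,a_2$.

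The main obstacle is the case $\tilde{\alpha}_0 = \h_q + \h_p - \E_5 - \E_6 - \E_7 - \E_8$. Here I would write $\tilde{\alpha}_0 = (\h_q + \h_p - \E_5 - \E_6 - \E_7) - \E_8$, so that $C^0 = E_8$ and $C^1$ is the proper transform of the $(1,1)$-curve through $p_5$ and its infinitely near points $p_6, p_7$; the relevant anticanonical component is $\tilde{D}_0 = \E_7 - \E_8$, and $\tilde{D}_0 \cap E_8$ sits at $\tilde{u}_7 = t^2 - a_0$ since $p_8$ is the point $(\tilde{u}_7,\tilde{v}_7)=(t^2-a_0,0)$ blown up to create $E_8$. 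The work lies in locating $\tilde{D}_0 \cap C^1$: one must track the proper transform of the $(1,1)$-curve --- in particular the direction in which it enters each successive exceptional curve --- through the blowups at $p_6$ and $p_7$ down to its meeting point with $E_7$, which should come out at $\tilde{u}_7 = t^2$, so that $\tilde{\chi}(\tilde{\alpha}_0) = \int_{t^2-a_0}^{t^2}(-\tilde{k})\,d\tilde{u}_7 = -\tilde{k}\,a_0$. This cascade bookkeeping is the only genuinely delicate computation; the rest are routine substitutions in charts.

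Finally, for part (c): fixing $\tilde{k}=-1$ in (b) gives $\tilde{\chi}(\tilde{\alpha}_i)=a_i$, so the parameters $a_i$ appearing in \autoref{pointlocationsP4} are precisely the root variables for the symmetry root basis of \autoref{fig:symmetryrootsP4}. Since the expressions there give $\tilde{\alpha}_0 + \tilde{\alpha}_1 + \tilde{\alpha}_2 = 2\h_q + 2\h_p - \E_1 - \cdots - \E_8 = -\mathcal{K}_{\tilde{\X}_t}$, linearity of $\tilde{\chi}$ yields $\tilde{\chi}(-\mathcal{K}_{\tilde{\X}_t}) = a_0 + a_1 + a_2$, and the asserted value $\tilde{\chi}(-\mathcal{K}_{\tilde{\X}_t})=1$ is then exactly the normalisation \eqref{normalisationP4} built into the point configuration.
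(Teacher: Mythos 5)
Your overall strategy is exactly the "standard calculation" the paper has in mind (the paper omits the proof and instead shows the model computation for the analogous Lemma in the Laguerre case), and parts (b) and (c) check out completely: the decompositions $\tilde{\alpha}_1=(\h_q-\E_3)-\E_4$, $\tilde{\alpha}_2=(\h_p-\E_1)-\E_2$, $\tilde{\alpha}_0=(\h_q+\h_p-\E_5-\E_6-\E_7)-\E_8$ are the right ones, the mediating components $\tilde{D}_5,\tilde{D}_1,\tilde{D}_0$ are correct, the intersection points are correct (in particular the proper transform of the $(1,1)$-curve, which is the line $q-p+t=0$, does meet $E_7$ at $\tilde{u}_7=t^2$, so $\tilde{\chi}(\tilde{\alpha}_0)=\int_{t^2-a_0}^{t^2}(-\tilde{k})\,d\tilde{u}_7=-\tilde{k}a_0$), the orientation of the integrals matches the convention $P_i\in C_i^0$, $Q_i\in C_i^1$, and (c) follows formally since $\tilde{\alpha}_0+\tilde{\alpha}_1+\tilde{\alpha}_2=-\mathcal{K}_{\tilde{\X}_t}$.

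The one genuine flaw is the blanket justification in part (a) that multiplicity $\geq 2$ of a component in the anticanonical divisor forces the contour integral to vanish. That is not a valid principle: a pole of order $m\geq 2$ along $\{v=0\}$ has residue given by the coefficient of $v^{-1}$ in the Laurent expansion, which need not vanish. For $\tilde{D}_2,\tilde{D}_3,\tilde{D}_4$ your conclusion is saved only because in the quoted charts the denominators are monomial in the relevant coordinate (e.g. $\tilde{u}_5^2\tilde{v}_5^3$), so no $v^{-1}$ term appears; but for $\tilde{D}_6$ in the $(\tilde{u}_6,\tilde{v}_6)$-chart the expansion of $\tilde{v}_6^{-2}(1+\tilde{u}_6\tilde{v}_6)^{-2}$ has $\tilde{v}_6^{-1}$-coefficient $-2\tilde{u}_6$, so the naive contour integral gives $-2\tilde{k}\,\tilde{u}_6\,d\tilde{u}_6\neq 0$ (an exact one-form), and in the companion chart one gets a different, likewise nonzero, answer. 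This reflects the fact that for pole order $\geq 2$ the residue one-form is not chart-independent; only along the multiplicity-one components $\tilde{D}_0,\tilde{D}_1,\tilde{D}_5$ is the Poincar\'e residue canonically defined, and these are the only residues actually used in the period computations of part (b). So you should either restrict part (a) to the statement and computation for the simple-pole components (which is all that parts (b) and (c) require), or justify the "zero" entries in the weaker, chart-independent sense (residues defined modulo exact forms, equivalently vanishing of periods), rather than by appealing to the pole order. This repair is local and does not affect the rest of your argument.
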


\subsubsection{Symplectic atlas and global Hamiltonian structure of $\Pain{IV}$ on Okamoto's space} \label{sect333}

In \cite{T1, T2, M, orbifold}, certain uniqueness results for Hamiltonian systems on the spaces constructed by Okamoto were proved, which give the theoretical basis for our method. These ensure that appropriate identifications between surfaces provide transformations relating differential systems to the standard Hamiltonian forms of Painlev\'e equations.
The uniqueness results relate to global Hamiltonian structures of the Painlev\'e equations on Okamoto's spaces, which we recall now for the present example.

With the normalisation obtained in the previous section, the symplectic form $\tilde{\omega} = dp \wedge dq$ on the fibre and Hamiltonian $H(q,p,t)$ from \eqref{hamP4} allow us to define a global Hamiltonian structure of $\Pain{IV}$ on the bundle forming Okamoto's space as follows. 
Similarly to in \autoref{section31} we denote this bundle by
\begin{equation}
\begin{gathered}
\tilde{\rho} : \tilde{E} \longrightarrow \tilde{B}, \\
\tilde{\rho}^{-1}(t) = \tilde{E}_t = \tilde{\X}_t \backslash \tilde{D}_{\operatorname{red}},
\end{gathered}
\end{equation}
where $\tilde{B} = \C$ is the independent variable space for $\pain{IV}$ and $\tilde{D}_{\operatorname{red}} = \bigcup_{i=0}^6 \tilde{D}_i$ is the union of the inaccessible divisors.
We take an atlas for the total space $E$, coming from the affine $(q,p)$-chart for the surface $\tilde{\X}_t$ as well as charts to cover affine parts of the exceptional divisors $E_2, E_4, E_8$ which are not contained in $\tilde{D}_{\operatorname{red}}$.
For our purposes we can require this atlas to be \emph{symplectic}, but in a slightly weaker sense than that of \cite{T1} since we do not require the symplectic form on the fibre to be written in canonical coordinates, but just to be \emph{independent of} $t$:
\begin{definition}
A symplectic atlas for the bundle $\tilde{E}$ is one for which the transition functions are birational and the symplectic form in each chart is independent of the point in the base space. The transition functions are of the form 
\begin{equation}
\begin{gathered}
\varphi : \C^3 \ni (x,y,t) \mapsto (X,Y,t) \in \C^3 ,\\
\varphi_t : (x,y) \mapsto (X,Y), \\
(\varphi_t)_* (F(x,y) \delta x \wedge \delta y ) = G(X,Y) \delta X \wedge \delta Y,
\end{gathered}
\end{equation}
where again $\delta$ is the exterior derivative on the fibre over $t$, and $F, G$ are rational functions independent of $t$.
\end{definition}
With such an atlas, the following Lemma allows for a differential system which is Hamiltonian in one chart to extend to the whole bundle with a Hamiltonian structure.
\begin{lemma} \label{symplecticlemma}
If we have a symplectic atlas for a bundle $\tilde{E}$ as above, then given $H(x,y,t)$ there exists $K(X,Y,t)$ (unique modulo functions of $t$) such that 
\begin{equation}
\varphi_* \left( F(x,y) dy \wedge dx - dH \wedge dt \right) = G(X,Y) dY \wedge dX - dK \wedge dt.
\end{equation}
Further, the Hamiltonian system 
\begin{equation}
F(x,y) \frac{dx}{dt} = \frac{\partial H}{\partial y}, \qquad F(x,y) \frac{dy}{dt} = - \frac{\partial H}{\partial x}
\end{equation} 
is transformed under $\varphi$ to 
\begin{equation}
G(X,Y) \frac{dX}{dt} = \frac{\partial K}{\partial Y}, \qquad G(X,Y) \frac{dY}{dt} = - \frac{\partial K}{\partial X}.
\end{equation} 
\end{lemma}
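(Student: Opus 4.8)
The plan is to recast the identity as an equality of rational $2$-forms on the total space with coordinate $(x,y,t)$ and to exploit the fact that $t$ is a distinguished coordinate never mixed with the ``fibrewise'' directions. Write $\eta_H := F(x,y)\,dy\wedge dx - dH\wedge dt$ and $\eta_K := G(X,Y)\,dY\wedge dX - dK\wedge dt$; since $\varphi$ is birational and fixes $t$, the desired equality $\varphi_*\eta_H = \eta_K$ is equivalent to $\varphi^*\eta_K = \eta_H$, so I would work with pullbacks. First I would expand the pullback of the target: writing $\varphi^*(dX) = \delta X + \tfrac{\partial X}{\partial t}\,dt$ and $\varphi^*(dY) = \delta Y + \tfrac{\partial Y}{\partial t}\,dt$, where $\delta$ is the exterior derivative along the fibre, a direct computation gives
\[
\varphi^*\big( G(X,Y)\, dY\wedge dX \big) = G\,\delta Y\wedge\delta X + \beta\wedge dt, \qquad \beta := G\Big( \tfrac{\partial X}{\partial t}\,\delta Y - \tfrac{\partial Y}{\partial t}\,\delta X \Big).
\]
By the defining property of a symplectic atlas, namely $(\varphi_t)_*(F\,\delta x\wedge\delta y) = G\,\delta X\wedge\delta Y$ for each fixed $t$, the first term is exactly $F\,dy\wedge dx$. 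Since also $\varphi^*(dK\wedge dt) = \delta(K\circ\varphi)\wedge dt$ and $dH\wedge dt = \delta H\wedge dt$, matching $\varphi^*\eta_K = \eta_H$ collapses to the single fibrewise equation $\delta\big(K\circ\varphi - H\big) = \beta$.

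The heart of the argument is then to produce a primitive of $\beta$. I would show $\beta$ is fibrewise closed: the $2$-form $F\,dy\wedge dx$ is closed on $\C^3$ (because $F$ depends only on the two variables $x,y$, so $dF\wedge dy\wedge dx = 0$), and $\varphi^*(G\,dY\wedge dX)$ is the pullback of a $2$-form closed for the same reason, hence so is the difference $\beta\wedge dt$; expanding $d(\beta\wedge dt) = (\delta\beta)\wedge dt$ and using that a fibrewise $2$-form wedged with $dt$ vanishes only if the $2$-form itself does, we conclude $\delta\beta = 0$. By the Poincar\'e lemma applied fibrewise on each chart of the atlas there is a function $g(x,y,t)$, unique up to an additive function of $t$, with $\beta = \delta g$. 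Setting $K := (H+g)\circ\varphi^{-1}$, i.e.\ $K(X,Y,t) = (H+g)\big(\varphi_t^{-1}(X,Y),t\big)$, then satisfies the required $2$-form identity, and the ambiguity of $g$ is precisely the asserted ambiguity of $K$; conversely, if two choices $K_1,K_2$ both work then $\varphi^*\big(d(K_1-K_2)\wedge dt\big) = 0$ forces $\delta\big((K_1-K_2)\circ\varphi\big) = 0$, so $K_1 - K_2$ is a function of $t$ alone.

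For the second assertion I would use the characterisation of the Hamiltonian flow by interior product: the suspended vector field $v = \tfrac{dx}{dt}\partial_x + \tfrac{dy}{dt}\partial_y + \partial_t$ satisfies $\iota_v\eta_H = 0$ if and only if $F\tfrac{dx}{dt} = \partial H/\partial y$ and $F\tfrac{dy}{dt} = -\partial H/\partial x$ — a one-line computation shows the $dx$- and $dy$-components of $\iota_v\eta_H$ give exactly Hamilton's equations, after which the $dt$-component vanishes automatically. Because $\varphi$ preserves the projection to $t$, the pushforward $\varphi_* v$ again has unit $dt$-component, hence is the suspended field of the curve $(X(t),Y(t)) = \varphi_t(x(t),y(t))$; then $\iota_{\varphi_* v}\eta_K = \iota_{\varphi_* v}\varphi_*\eta_H = \varphi_*(\iota_v\eta_H) = 0$, and reading off the $dX$- and $dY$-components yields $G\tfrac{dX}{dt} = \partial K/\partial Y$, $G\tfrac{dY}{dt} = -\partial K/\partial X$ along the image curve.

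The step I expect to be the main obstacle is the middle one: ensuring the fibrewise primitive $g$ exists globally enough on the relevant chart to define $K$ there, i.e.\ controlling the fibrewise de Rham class of $\beta$. In the cases at hand the charts are Zariski-open in affine space and the relevant residues of $\beta$ vanish, so $g$, and hence $K$, can be taken rational; apart from this, the only care required is bookkeeping of signs in the pullback expansion and never letting the distinguished coordinate $t$ mix with the fibrewise directions.
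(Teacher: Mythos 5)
Your argument is correct, and it is essentially the calculation the paper appeals to: the published proof is a single sentence citing Shioda--Takano and ``direct calculation'', and your pullback expansion, the reduction to $\delta(K\circ\varphi - H)=\beta$ with $\delta\beta=0$ forced by the $t$-independence of $F$ and $G$, and the interior-product characterisation of the suspended flow are a faithful (and somewhat more invariant) working-out of exactly that computation. Your closing caveat about choosing the fibrewise primitive rational is the only point glossed over on both sides, and it is handled in the paper only implicitly, by exhibiting the Hamiltonians $K$ explicitly in each chart in the applications.
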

\begin{proof}
This is a slight generalisation of an elementary fact about symplectic transformations as presented in \cite{T1}, and is proved by direct calculation.
\end{proof}
For the bundle in this case, we can take the atlas to be
\begin{equation}
\tilde{E} = \C^3_{q,p,t} \cup \C^3_{\tilde{u}_2,\tilde{v}_2,t} \cup \C^3_{\tilde{u}_4,\tilde{v}_4,t} \cup \C^3_{\tilde{r}_8,\tilde{s}_8,t},
\end{equation}
with gluing defined by 
\begin{equation}
\begin{aligned}
\frac{1}{q} &= \tilde{v}_2( -a_2^{-1} + \tilde{u}_2 \tilde{v}_2) , &&\quad p = \tilde{v}_2, \\
q &= \tilde{v}_4( a_1 + \tilde{u}_4 \tilde{v}_4), &&\quad \frac{1}{p} = \tilde{v}_4, \\
\frac{1}{q} &= \tilde{s}_8, &&\quad p= \frac{1 +  t \tilde{s}_8 -   a_0 \tilde{s}_8^{2} - \tilde{r}_8 \tilde{s}_8^{3} }{\tilde{s}_8}.
\end{aligned}
\end{equation}
The total space $E$ now has the structure of a fibre bundle of holomorphic symplectic manifolds, with fibre $E_t = \X_t \backslash D_{\operatorname{red}}$ over $ t \in \C$ equipped the following symplectic form:
\begin{equation}
\tilde{\omega}_t = \delta p \wedge \delta q = \frac{\delta u_2 \wedge \delta v_2}{(\tilde{u}_2 \tilde{v}_2 - a_2^{-1})^2} = \delta u_4 \wedge \delta v_4 = \delta \tilde{r}_8 \wedge \delta \tilde{s}_8,
\end{equation}
where we use $\delta$ to indicate the exterior derivative on the fibre $E_t$, so these equalities hold under the restrictions of the gluing to the fibres, and $t$ is treated as a constant in the calculations.

\begin{remark}
We have used the same charts $(\tilde{u}_2, \tilde{v}_2)$ and $(\tilde{u}_4, \tilde{v}_4)$ we introduced when constructing the surfaces $\tilde{\X}_t$, but  we require an adjustment to our charts covering $E_8$ in order to define the Hamiltonian structure using the Lemma above. 
If we introduce the charts $(\tilde{u}_8, \tilde{v}_8)$ and $(\tilde{U}_8, \tilde{V}_8)$ in the usual way to cover the exceptional divisor $E_8$ arising from the blowup of $p_8$, namely according to
\begin{equation}
(\tilde{u}_7 - t^2 +a_0, \tilde{v}_7) = (\tilde{u}_8 \tilde{v}_8 , \tilde{v}_8 ) = (\tilde{V}_8, \tilde{U}_8 \tilde{V}_8),
\end{equation}
then the part of $E_8$ away from the inaccessible divisors is visible in the affine $(\tilde{u}_8, \tilde{v}_8)$-chart. 
In this chart, the symplectic form is given by
\begin{equation}
\tilde{\omega}_t = \frac{ \delta \tilde{v}_8 \wedge \delta \tilde{u}_8}{ \left(1 + t \tilde{v}_8 + (t^2 - a_0) \tilde{v}_8^2 + \tilde{u}_8 \tilde{v}_8^2 \right)^2}.
\end{equation}
The $t$-dependence in the symplectic form written in this chart means that Lemma \ref{symplecticlemma} does not apply here, and indeed there exists no Hamiltonian form for the differential system in this chart with respect to the symplectic form in these coordinates.
We adjust our atlas by using instead the chart $(\tilde{r}_8, \tilde{s}_8)$ to cover the relevant part of the exceptional divisor $E_8$. This comes from the symplectic atlas constructed in \cite{T2} for the Okamoto Hamiltonian form of $\pain{IV}$ \cite{Ok1}, which is related to our standard model \eqref{hamP4} by a scaling of dependent and independent variables. The relevant chart in \cite{T2} is denoted by $(x(\infty \infty), y(\infty \infty))$, and is related to ours by
\begin{equation}
x(\infty \infty) = \frac{\tilde{s}_8}{\sqrt{2}}, \qquad y(\infty \infty) = \sqrt{2} \tilde{r}_8.
\end{equation}
%
The part of the exceptional divisor $E_8$ away from its intersection with $\tilde{D}_{\operatorname{red}}$ is visible in this chart, given by $\tilde{s}_8 = 0$, parametrised by $\tilde{r}_8$.
\end{remark}

From Lemma \ref{symplecticlemma} this atlas allows $H(q,p,t)$ to determine Hamiltonians $H_2(\tilde{u}_2, \tilde{v}_2, t)$, $H_4(\tilde{u}_4, \tilde{v}_4, t)$ and $H_8(\tilde{r}_8, \tilde{s}_8, t)$ in the other charts, and we have a global Hamiltonian structure of the differential equation on $\tilde{E}$ provided by the 2-form
\begin{equation}
\begin{aligned}
\tilde{\Omega} &= dp \wedge dq - dH \wedge dt = \frac{d u_2 \wedge d v_2}{(\tilde{u}_2 \tilde{v}_2 - a_2^{-1})^2} - dH_2 \wedge dt \\
&=  d\tilde{u}_4 \wedge d\tilde{v}_4 - dH_4 \wedge dt =  d\tilde{r}_8 \wedge d \tilde{s}_8 - dH_8 \wedge dt,
\end{aligned}
\end{equation}
where $d$ denotes the exterior derivative on the total space $\tilde{E}$, and the Hamiltonians in each chart can be taken as
\begin{equation}
\begin{aligned}
H(q,p,t) &= q p (p-q-t) - a_1 p - a_2 q, \\
H_2(\tilde{u}_2,\tilde{v}_2,t) &= \frac{a_2(t - a_2^2 u_2) + (a_0 -1 - t a_2^2 u_2)v_2 + a_2(2a_1+a_2)u_2 v_2^2 - a_1 a_2^2 u_2^2 v_2^3}{(1-a_2 u_2 v_2)^2}, \\
H_4(\tilde{u}_4,\tilde{v}_4,t) &= \tilde{u}_4 - (a_1 + \tilde{u}_4 \tilde{v}_4)(t + \tilde{v}_4 (1 - a_0 + \tilde{u}_4 \tilde{v}_4)), \\
H_8(\tilde{r}_8, \tilde{s}_8,t) &= -t - \tilde{r}_8 +(1-a_1) \tilde{s}_8 + (a_2 - \tilde{r}_8 \tilde{s}_8)(t - \tilde{s}_8(1+a_0 + \tilde{r}_8 \tilde{s}_8)).
\end{aligned}
\end{equation}
From the second part of Lemma \ref{symplecticlemma}, we know that the system of first-order differential equations is given in charts as follows:
\begin{equation}
\begin{gathered}
q' = \frac{\partial H}{\partial p}, \quad p' = - \frac{\partial H}{\partial q}, \\
\tilde{v}_2'  = (\tilde{u}_2 \tilde{v}_2 - a_2^{-1})^2 \frac{\partial H_2}{\partial \tilde{u}_2}, \quad \tilde{u}_2'   = - (\tilde{u}_2 \tilde{v}_2 - a_2^{-1})^2\frac{\partial H_2}{\partial \tilde{v}_2}, \\ 
\tilde{v}_4' = \frac{\partial H_4}{\partial \tilde{u}_4}, \quad \tilde{u}_4' = - \frac{\partial H_4}{\partial \tilde{v}_4},  \\
\tilde{s}_8' = \frac{\partial H_8}{\partial \tilde{r}_8}, \quad \tilde{r}_8' = - \frac{\partial H_8}{\partial \tilde{s}_8}.
\end{gathered}
\end{equation}
We have the following relation between these Hamiltonians under the gluing of the total space $\tilde{E}$:
\begin{equation}
H = H_2 = H_4 = H_8 - q.
\end{equation}
The relevance of this to the present method is through the uniqueness results of Hamiltonian systems on Okamoto's spaces. The result for $\pain{IV}$ provided in \cite{T2} implies that any system given by a collection of rational Hamiltonians holomorphic on $\tilde{E}$ as constructed here must coincide with the standard Hamiltonian form of $\pain{IV}$ in \eqref{hamP4}. This means that an appropriate identification of the space $E$ from the semi-classical Laguerre weight with the standard model $\tilde{E}$ must identify the differential system \eqref{differential} with the standard Hamiltonian form of $\pain{IV}$, as we will see in \autoref{section36}.

\subsection{Identification on the level of the Picard lattice}

We will obtain the identification between the spaces $E$, $\tilde{E}$ by finding a birational mapping $(x,y) \mapsto (q,p)$ which gives an appropriate isomorphism $\X_c \rightarrow \tilde{\X}_t$. 
We will do this by first finding an identification on the level of Picard lattices $\Pic(\X_c)$, $\Pic(\tilde{\X}_t)$ which
\begin{enumerate}
\item matches the surface roots in \autoref{fig:d-roots-hw} and \autoref{fig:d-roots-P4}, i.e. $\delta_i = \tilde{\delta}_i$ for $i=0, \dots, 6$;
\item preserves the intersection form;
\item preserves effectiveness of divisor classes.
\end{enumerate}
Such an identification is essentially a change of basis for the rank 10 Lorentzian lattice which identifies the surface sublattices, so this is a linear algebra problem with the added consideration of effectiveness of divisor classes.

\begin{lemma} \label{initident}
The following identification of $\Pic(\X_c)$ and $\Pic(\tilde{\X}_t)$ matches the surface root bases in \autoref{fig:d-roots-hw} and \autoref{fig:d-roots-P4}, and preserves the intersection form:
\begin{equation*}
\begin{aligned}
\h_x &= \h_q, 					&&\qquad \h_q = \h_x, \\
 \h_y &= \h_q + \h_p - \E_1 - \E_3,	&&\qquad \h_p = \h_x + \h_y - \F_1 - \F_4, \\
 \F_1 &= \h_q - \E_3, 			&&\qquad \E_1 = \h_x - \F_4,  \\
 \F_2 &= \E_4,	 				&&\qquad \E_2 = \F_3, \\
\F_3 &= \E_2,					&&\qquad \E_3 = \h_x - \F_1, \\
 \F_4 &= \h_q - \E_1, 			&&\qquad \E_4 = \F_2, \\
 \F_5 &= \E_5,					&&\qquad \E_5 = \F_5, \\
  \F_6 &= \E_6, 					&&\qquad \E_6 = \F_6,  \\
  \F_7 &= \E_7, 					&&\qquad \E_7 = \F_7,  \\
  \F_8 &= \E_8,					&&\qquad \E_8 = \F_8.
\end{aligned}
\end{equation*}

\end{lemma}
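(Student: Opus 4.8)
The statement is purely linear algebra in the rank $10$ lattices $\Pic(\X_c)$ and $\Pic(\tilde{\X}_t)$, and the plan is to verify it by direct computation. First I would check that the two columns of formulas define mutually inverse $\Z$-linear maps, and hence together a genuine lattice isomorphism $\Pic(\X_c) \to \Pic(\tilde{\X}_t)$: each column expresses one basis in terms of the other, so it extends uniquely to a group homomorphism, and one only needs to substitute the right-hand column into the left-hand column and recover each of the ten generators. For example, $\h_q + \h_p - \E_1 - \E_3 = \h_x + (\h_x + \h_y - \F_1 - \F_4) - (\h_x - \F_4) - (\h_x - \F_1) = \h_y$, and the remaining checks are of the same kind.

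Next, to see that the surface root bases are matched, I would substitute the asserted expressions for $\h_x, \h_y, \F_1, \dots, \F_8$ into the seven defining formulas for $\delta_0, \dots, \delta_6$ in \autoref{fig:d-roots-hw} and compare with the formulas for $\tilde{\delta}_0, \dots, \tilde{\delta}_6$ in \autoref{fig:d-roots-P4}. Each reduces to a one-line cancellation, e.g. $\delta_1 = \h_x - \F_3 - \F_4 = \h_q - \E_2 - (\h_q - \E_1) = \E_1 - \E_2 = \tilde{\delta}_1$, $\delta_2 = \F_4 - \F_5 = (\h_q - \E_1) - \E_5 = \tilde{\delta}_2$, and $\delta_4 = \h_y - \F_4 - \F_5 = (\h_q + \h_p - \E_1 - \E_3) - (\h_q - \E_1) - \E_5 = \h_p - \E_3 - \E_5 = \tilde{\delta}_4$.

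Finally, to check that the intersection form is preserved, I would use that a $\Z$-linear isomorphism of these lattices is an isometry if and only if it preserves the intersection numbers of one basis, so it suffices to compute $\h_x \cdot \h_x$, $\h_x \cdot \h_y$, $\h_y \cdot \h_y$, $\h_x \cdot \F_i$, $\h_y \cdot \F_i$, and $\F_i \cdot \F_j$ for the images under the identification, using the analogue of \eqref{eq:int-form} on $\Pic(\tilde{\X}_t)$, and confirm agreement with \eqref{eq:int-form} on $\Pic(\X_c)$. Almost all of these vanish by the orthogonality relations among the classes $\h_q, \h_p, \E_k$; the only nonzero contributions come from $\h_x \cdot \h_y = \h_q \cdot (\h_q + \h_p - \E_1 - \E_3) = 1$, from $\h_y \cdot \h_y = (\h_q + \h_p - \E_1 - \E_3)^2 = 2 \h_q \cdot \h_p - \E_1 \cdot \E_1 - \E_3 \cdot \E_3 = 0$, and from $\F_i \cdot \F_i = -1$, where for $i = 1, 4$ one uses e.g. $\F_1 \cdot \F_1 = (\h_q - \E_3)^2 = -1$.

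I do not expect a genuine obstacle: the lemma is established by the bookkeeping above. The only point worth flagging is that this is one of many lattice isomorphisms matching the surface roots and the intersection form — they differ by Cremona isometries fixing the surface root lattice, i.e. essentially by the symmetry group of type $A_2^{(1)}$ — and the particular one recorded here is chosen so that in Step~4 it will also preserve effectiveness of divisor classes and hence be realisable by an honest birational mapping $(x,y) \mapsto (q,p)$. Verifying that effectiveness property and explicitly constructing the birational map is the substantive part of the program, carried out in the sections that follow rather than in this lemma.
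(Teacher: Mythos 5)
Your proposal is correct, and for the lemma as stated (which only asserts properties of the given identification) a direct verification of this kind is fully adequate. The paper's own proof runs in the opposite direction: rather than checking the displayed formulas, it \emph{derives} them by imposing the seven root-matching conditions $\delta_i=\tilde{\delta}_i$, starting from the choice $\F_8=\E_8$ and recursively obtaining $\F_7,\F_6,\F_5$, $\F_4=\h_q-\E_1$ and $\h_y=\h_q+\h_p-\E_1-\E_3$, then making two further choices of $(-1)$-classes meeting the appropriate components ($\F_3=\E_2$, $\F_2=\E_4$), and finally pinning down $\h_x,\F_1,\F_3$ from $\h_x-\F_3=\E_1$, $\h_x-\F_1=\E_3$ together with preservation of the intersection form, after which the inverse is computed. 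The two routes involve the same elementary lattice bookkeeping; what the paper's derivation buys is an explanation of where the identification comes from and exactly which choices were free, which is the content of your own closing remark that the identification is unique only up to isometries fixing the surface root lattice (the $A_2^{(1)}$-symmetries used later when matching the discrete dynamics), while your verification is shorter and suffices for the stated claim. Both you and the paper correctly defer effectiveness of divisor classes to the construction of the birational map in the next step. One trivial slip to fix: in your check of $\h_y\cdot\h_y$ the square terms enter with a plus sign, i.e. $(\h_q+\h_p-\E_1-\E_3)^2=2\,\h_q\cdot\h_p+\E_1\cdot\E_1+\E_3\cdot\E_3=2-1-1=0$; your final value $0$ is right but the intermediate expression as written would give $4$.
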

\begin{proof}
To identify the surface root bases, we require 
\begin{equation}
\begin{gathered}
\F_7 - \F_8 = \E_7 - \E_8,\quad \h_x - \F_3 - \F_4 = \E_1-\E_2, \quad \F_4 - \F_5 = \h_q - \E_1 - \E_5, \\
\F_5 - \F_6 = \E_5 - \E_6, \quad \h_y - \F_4 - \F_5 = \h_p - \E_3 - \E_5, \\
\h_x - \F_1 - \F_2 = \E_3 - \E_4, \quad \F_6 - \F_7 = \E_6 - \E_7.
\end{gathered}
\end{equation}
We begin by setting $\F_8 = \E_8$, since both are effective classes of self-intersection $-1$. 
Then the conditions allow us to recursively determine
\begin{equation}
\F_7 = \E_7, \quad \F_6 = \E_6, \quad \F_5 = \E_5, \quad \F_4 = \h_q - \E_1, \quad \h_y = \h_q + \h_p - \E_1 - \E_3.
\end{equation}
We then make two more choices of classes of $(-1)$-curves to match. In $\Pic(\X_c)$, we have $\F_4$ being a class of self-intersection $-1$ which intersects $D_1$ with multiplicity one, so should be identified with the class of a $(-1)$-curve on $\tilde{\X}_t$ which intersects $\tilde{D}_1$ similarly. 
We take $\F_4 = \E_2$, and by similar reasoning $\F_2 = \E_4$, so are left to determine only $\h_x$, $\F_1$, $\F_3$. 
We require that 
\begin{equation}
\h_x - \F_3 = \E_1, \quad \h_x - \F_1 = \E_3,
\end{equation}
as well as the intersection form being preserved, from which we deduce the rest of the matching. 
From this, computing the inverse is straightforward.
\end{proof}
We remark that at this stage we do not know for certain that the identification in Lemma \ref{initident} preserves effectiveness of divisors. 
If it did not, this would become evident at the next step when we cannot find a birational mapping realising the identification. 
In such a case, the identification would require adjustment.

\subsection{The change of variables and parameter correspondence}
We now seek a birational map which provides an isomorphism which realises the identification obtained above. 
The parameters from the differential systems will be related in a way which can be computed using the period mapping similarly to the standard model of $E_6^{(1)}$-surfaces.
For this reason we now include the parameters $\mathbf{a} = (a_0, a_1, a_2 ; t)$ and $\mathbf{b} = (\alpha, N, n; c)$ in our notation for the surfaces, so the standard model is denoted by $\tilde{\X}_{\mathbf{a}}$ and those from the semi-classical Laguerre weight by $\X_{\textbf{b}}$.
Under our identification in Lemma \ref{initident}, we obtain the choice of symmetry root basis corresponding to that of the standard model in \autoref{fig:symmetryrootsP4}, which we present in \autoref{fig:symmetryrootslaguerre}. 
\begin{figure}[H]
\begin{equation*}\label{eq:d-roots-hw}
	\raisebox{-32.1pt}{\begin{tikzpicture}[
			elt/.style={circle,draw=black!100,thick, inner sep=0pt,minimum size=2mm}]
		\path 	( -1,0) 	node  	(a1) [elt, label={[xshift=0pt, yshift = -25 pt] $\alpha_{1}$} ] {}
		        ( 1,0) 	node  	(a2) [elt, label={[xshift=5pt, yshift = -25 pt] $\alpha_{2}$} ] {}
		        ( 0,1.3) node 	(a0) [elt, label={[xshift=10pt, yshift = -5 pt] $\alpha_{0}$} ] {};
		\draw [black,line width=1pt ] (a0) -- (a1)  -- (a2) -- (a0);
	\end{tikzpicture}} \qquad \qquad
			\begin{alignedat}{2}
			\alpha_{0} &= 2 \h_x + \h_y - \F_1-\F_4 - \F_5 - \F_6 - \F_7 - \F_8, \\
			\alpha_{1} &= \F_1-\F_2, \\
			\alpha_{2} &= \h_y - \F_1-\F_3,
			\end{alignedat}
\end{equation*}
	\caption{The Symmetry Root Basis for the semi-classical Laguerre weight (initial choice).}
	\label{fig:symmetryrootslaguerre}
\end{figure}
The mechanism by which computation of the root variables for this symmetry root basis for $\X_{\mathbf{b}}$ yields the parameter correspondence for our change of variables is provided by the following fact (see \cite{Sak:2001:RSAWARSGPE, DT18}).

\begin{lemma}
Suppose we have an isomorphism 
\begin{equation}
\iota :  \X_{\mathbf{b}} \longrightarrow \tilde{\X}_{\mathbf{a}} ,
\end{equation}
which induces the identification on the level of Picard lattices in Lemma \ref{initident} so 
\begin{equation}
\iota^*(\tilde{\delta}_i) = \delta_i \text{ for } i=0,\dots, 6, \qquad \iota^*(\tilde{\alpha}_j) = \alpha_j  \text{ for } j=0, \dots, 2.
\end{equation} 
Suppose also that the rational sympectic forms $\omega$, $\tilde{\omega}$ on the surfaces are related by $\iota^* \tilde{\omega} = \omega$. 
Then the definition of the period mapping $\chi$ (using the symplectic form $\omega$) ensures that the root variables for the surface $\X_{\mathbf{b}}$  corresponding to the symmetry root basis in \autoref{fig:symmetryrootslaguerre} coincide with those for $\tilde{\X}_{\mathbf{a}}$:
\begin{equation}
\chi (\alpha_i) = \chi ( \iota^* \tilde{\alpha}_i ) = \tilde{\chi}( \tilde{\alpha}_i) = a_i.
\end{equation}
\end{lemma}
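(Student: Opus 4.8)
The plan is to observe that the period mapping is a natural invariant of the data consisting of a surface together with its anticanonical configuration and a rational symplectic form, so that an isomorphism $\iota$ respecting all of this data intertwines $\chi$ and $\tilde{\chi}$; the claimed values then follow from the computation of $\tilde{\chi}(\tilde{\alpha}_i) = a_i$ in \autoref{sec332}.

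First I would record the geometric consequences of the hypotheses. Since $\iota \colon \X_{\mathbf{b}} \to \tilde{\X}_{\mathbf{a}}$ is an isomorphism it carries irreducible curves to irreducible curves and preserves the intersection form; together with $\iota^* \tilde{\delta}_i = \delta_i$ this forces $\iota(D_i) = \tilde{D}_i$ for $i = 0, \dots, 6$, so $\iota$ identifies the two anticanonical divisors component by component. For a symmetry root $\alpha_i$, fix a presentation $\alpha_i = [C_i^1] - [C_i^0]$ with $C_i^0, C_i^1$ irreducible curves, as in the definition of the period mapping recalled in \autoref{sec332}. Then $\tilde{C}_i^j := \iota(C_i^j)$ are irreducible curves on $\tilde{\X}_{\mathbf{a}}$, and because $\iota_* = (\iota^*)^{-1}$ on the Picard lattices we get $[\tilde{C}_i^1] - [\tilde{C}_i^0] = \iota_* \alpha_i = \tilde{\alpha}_i$. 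Consequently $\iota$ takes the unique anticanonical component $D_k$ with $D_k \cdot C_i^0 = D_k \cdot C_i^1 = 1$ to the unique anticanonical component $\tilde{D}_k$ with $\tilde{D}_k \cdot \tilde{C}_i^0 = \tilde{D}_k \cdot \tilde{C}_i^1 = 1$, and it sends the intersection points $P_i = D_k \cap C_i^0$ and $Q_i = D_k \cap C_i^1$ to $\tilde{P}_i = \tilde{D}_k \cap \tilde{C}_i^0$ and $\tilde{Q}_i = \tilde{D}_k \cap \tilde{C}_i^1$.

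Next I would transport the residue and the line integral defining $\chi(\alpha_i)$. Because $\iota$ is a biholomorphism with $\iota^* \tilde{\omega} = \omega$, taking residues along corresponding anticanonical components commutes with pullback, so $\operatorname{res}_{D_k} \omega = \iota^* \bigl( \operatorname{res}_{\tilde{D}_k} \tilde{\omega} \bigr)$ as meromorphic $1$-forms under the induced isomorphism $\iota|_{D_k} \colon D_k \to \tilde{D}_k$. Substituting this into the defining integral and changing variables along $\iota$ turns $\int_{P_i}^{Q_i} \operatorname{res}_{D_k} \omega$ into $\int_{\tilde{P}_i}^{\tilde{Q}_i} \operatorname{res}_{\tilde{D}_k} \tilde{\omega}$, which is exactly the recipe computing $\tilde{\chi}(\tilde{\alpha}_i)$ for the presentation $\tilde{\alpha}_i = [\tilde{C}_i^1] - [\tilde{C}_i^0]$. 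Invoking the well-definedness of the period mapping (independence of the chosen presentation of a symmetry root as a difference of effective classes, as in \cite{Sak:2001:RSAWARSGPE}), this equals $\tilde{\chi}(\tilde{\alpha}_i)$, and by parts (b) and (c) of the Lemma in \autoref{sec332}, with the normalisation $\tilde{k} = -1$, we have $\tilde{\chi}(\tilde{\alpha}_i) = a_i$. Hence $\chi(\alpha_i) = \chi(\iota^* \tilde{\alpha}_i) = \tilde{\chi}(\tilde{\alpha}_i) = a_i$, as claimed.

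The main obstacle is organisational rather than computational: one must ensure that the anticanonical component $D_k$ singled out in the definition of $\chi(\alpha_i)$ is genuinely matched by $\iota$ with the component $\tilde{D}_k$ singled out in the definition of $\tilde{\chi}(\tilde{\alpha}_i)$ — this is where preservation of the intersection form and of effectiveness, hence of the labelled Dynkin diagram, is used — and that the residue is natural under the isomorphism $\iota|_{D_k}$. Both are standard, so beyond the already-established value $\tilde{\chi}(\tilde{\alpha}_i) = a_i$ no further calculation is required; the argument is purely a naturality statement for the period mapping.
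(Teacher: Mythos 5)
Your proof is correct and is essentially the intended argument: the paper states this lemma without proof, citing \cite{Sak:2001:RSAWARSGPE, DT18}, and the standard justification is precisely the naturality of the period mapping that you spell out — $\iota$ matches the anticanonical components (unique effective representatives of the $\delta_i$), the Poincar\'e residue commutes with pullback since $\iota^*\tilde{\omega}=\omega$, and the defining integral transforms accordingly, after which the already-computed values $\tilde{\chi}(\tilde{\alpha}_i)=a_i$ (with $\tilde{k}=-1$) give the claim. No gaps.
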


Therefore to compute the parameter correspondence between the system \eqref{differential} from the semi-classical Laguerre weight and the standard Hamiltonian form of $\pain{IV}$ with our identification on the level of Picard lattices, we begin with the computation of the root variables for the surface $\X_{\mathbf{b}}$.
To define the period mapping for the surface $\X_{\mathbf{b}}$ we take the symplectic form as in \eqref{omegaLaguerre} given in relevant charts by
\begin{equation} \label{omegaXcharts}
\begin{aligned}
\omega &= k \frac{dx \wedge dy}{x} = - k \frac{dX \wedge dy}{X} = - k \frac{dx \wedge dY}{Y^2} = k \frac{dX \wedge dY}{X Y^2} \\
&= k \frac{ dV_4 \wedge dU_4}{U_4^2 V_4^2 } = k \frac{dv_5 \wedge du_5}{ u_5^2 v_5^2} = k \frac{dv_6 \wedge du_6}{v_6^2(u_6 v_6 - 2c)^2} \\
&= k \frac{d v_7 \wedge du_7}{v_7 \left( u_7 v_7^2 + 2c(c-1)v_7 - 2c \right)^2}.
\end{aligned}
\end{equation}

\begin{lemma} \label{rootvarlemmalaguerre}

\begin{enumerate}[(a)]
\item The residue of the rational symplectic form $\omega$ given in charts by \eqref{omegaXcharts} along each of the irreducible components of the anticanonical divisor of $\X_{\mathbf{b}}$ is given by
\begin{equation}
\begin{aligned}
\operatorname{res}_{D_0} \omega &= k \frac{d u_7}{4 c^2},  &\quad &\operatorname{res}_{D_1} \omega = -k dy, &\qquad &\operatorname{res}_{D_2} \omega = 0, \qquad \operatorname{res}_{D_3} \omega = 0,  \\
\operatorname{res}_{D_4} \omega &= 0,  &\quad &\operatorname{res}_{D_5} \omega = k dy,  &\quad &\operatorname{res}_{D_6} \omega =  0. 
\end{aligned}
\end{equation}
\item The values of the period mapping on the symmetry roots in \autoref{fig:symmetryrootslaguerre} are given by
\begin{equation}
\chi(\alpha_0) = k \frac{\alpha + n + 1 }{N}, \qquad  \chi(\alpha_1) = - k \frac{\alpha}{N} , \qquad \chi(\alpha_2) = - k \frac{n}{N}.
\end{equation}
\item By normalising the symplectic form so that $k=N$ we have the root variables for the surface $\X_{\mathbf{b}}$ with the choice of symmetry root basis in \autoref{fig:symmetryrootslaguerre} being given by
\begin{equation}
a_0 = \alpha + n + 1, \qquad a_1 = - \alpha, \qquad a_2 = -n.
\end{equation}
In particular, this is consistent with the parameter normalisation $a_0 + a_1 + a_2 =1$ from the Hamiltonian form of $\Pain{IV}$.
\end{enumerate}

\end{lemma}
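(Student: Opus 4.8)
The plan is to prove all three parts by direct computation using the period-mapping recipe recalled just before the lemma. For part (a) I would treat the components $D_i$ one at a time, in each case passing to the coordinate chart in which $D_i$ is a coordinate axis: $D_5 = \{x=0\}$ in the $(x,y)$-chart, $D_1 = \{X=0\}$ in the $(X,y)$-chart, $D_4 = \{Y=0\}$ in the $(X,Y)$-chart, and $D_2, D_3, D_6, D_0$ as $\{V_4=0\}, \{v_5=0\}, \{v_6=0\}, \{v_7=0\}$ in the charts $(U_4,V_4), (u_5,v_5), (u_6,v_6), (u_7,v_7)$ respectively, taken away from the next point of the cascade over $q_4$. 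The form $\omega$ in each of these charts is recorded in \eqref{omegaXcharts}; comparing with the anticanonical divisor \eqref{ACdivP4}, $\omega$ has a simple pole only along the multiplicity-one components $D_0, D_1, D_5$, and a direct check gives $\operatorname{res}_{D_i}\omega = 0$ for $i \in \{2,3,4,6\}$. For the remaining three, reading the residue off from the simple-pole parts of \eqref{omegaXcharts} produces $\operatorname{res}_{D_5}\omega = k\,dy$ and $\operatorname{res}_{D_1}\omega = -k\,dy$ at once, and $\operatorname{res}_{D_0}\omega = k\,du_7/(4c^2)$ using that $u_7 v_7^2 + 2c(c-1) v_7 - 2c \to -2c$ as $v_7 \to 0$, the signs being dictated by the chart conventions.

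For part (b) I would run the recipe on each symmetry root of \autoref{fig:symmetryrootslaguerre}. For $\alpha_1 = \F_1 - \F_2$, the unique anticanonical component meeting both exceptional curves $F_1$ and $F_2$ with multiplicity one is $D_5 = \h_x - \F_1 - \F_2$, whose intersections with $F_2$ and $F_1$ lie at $(x,y) = (0, \alpha/N)$ and $(0,0)$ — the locations of $q_2$ and $q_1$ — so integrating $\operatorname{res}_{D_5}\omega = k\,dy$ from $y = \alpha/N$ to $y = 0$ gives $\chi(\alpha_1) = -k\alpha/N$. For $\alpha_2 = \h_y - \F_1 - \F_3$ I would write the class as the difference of the proper transform of the line $\{y=0\}$ (class $\h_y - \F_1$) and the curve $F_3$; the relevant component is then $D_1 = \h_x - \F_3 - \F_4$, which meets $F_3$ at the point with $y = -n/N$ (the location of $q_3$) and the line $\{y=0\}$ at $y = 0$, so integrating $\operatorname{res}_{D_1}\omega = -k\,dy$ gives $\chi(\alpha_2) = -kn/N$. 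For $\alpha_0$ I would avoid a direct computation and use linearity of $\chi$ together with $\alpha_0 + \alpha_1 + \alpha_2 = -\mathcal{K}_{\X_{\mathbf b}}$: once $\chi(-\mathcal{K}_{\X_{\mathbf b}}) = k/N$ has been found (by a single period integral along $D_0$, or by running the recipe on the decomposition $\alpha_0 = (2\h_x + \h_y - \F_1 - \F_4 - \F_5 - \F_6 - \F_7) - \F_8$ with relevant component again $D_0$), one obtains $\chi(\alpha_0) = k/N - \chi(\alpha_1) - \chi(\alpha_2) = k(\alpha + n + 1)/N$.

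Part (c) is then immediate: fixing the normalisation $k = N$ turns these values into $a_0 = \chi(\alpha_0) = \alpha + n + 1$, $a_1 = \chi(\alpha_1) = -\alpha$, $a_2 = \chi(\alpha_2) = -n$, and $a_0 + a_1 + a_2 = 1$ reproduces the normalisation \eqref{normalisationP4} coming from the Hamiltonian form of $\pain{IV}$.

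I expect the main obstacle to be the chart bookkeeping in part (a): for each inaccessible divisor one must pick the correct one of its two blowup charts, propagate the transition functions faithfully through the cascade $q_4 \leftarrow q_5 \leftarrow q_6 \leftarrow q_7 \leftarrow q_8$, and track the residue signs carefully. A direct attack on $\chi(\alpha_0)$ would be more delicate still, as it requires locating the $(-1)$-curve in the class $2\h_x + \h_y - \F_1 - \F_4 - \F_5 - \F_6 - \F_7$ and its intersection with $D_0$ in coordinates; this is why routing $\alpha_0$ through $-\mathcal{K}_{\X_{\mathbf b}}$ and linearity of the period mapping is preferable.
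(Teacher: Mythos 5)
Your parts (a) and (c), and your computations of $\chi(\alpha_1)$ and $\chi(\alpha_2)$ in part (b), are correct and coincide with the standard chart-by-chart residue and period computations that the paper leaves unwritten (your choice of charts for the components of the anticanonical divisor, the vanishing of the residues along the multiplicity $\geq 2$ components, and the endpoints $y=0,\ \alpha/N$ and $y=-n/N,\ 0$ on $D_5$ and $D_1$ are all as in the paper's data). The divergence, and the gap, is in $\chi(\alpha_0)$ -- exactly the one computation the paper presents in detail, precisely because it is the involved one. Your preferred route rests on the value $\chi(-\mathcal{K}_{\X_{\mathbf b}}) = k/N$, which you never derive. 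Within the period-mapping recipe this value is not free: to compute it you would have to write $-\mathcal{K}_{\X_{\mathbf b}} = [C^1] - [\F_8]$ with $C^1$ an effective curve in the class $2\h_x + 2\h_y - \F_1 - \cdots - \F_7$ (a $(2,2)$-curve through all of $q_1,\dots,q_7$, including the cascade over $q_4$) and locate $C^1 \cap D_0$ in the $(u_7,v_7)$-chart, which is at least as much work as the computation you are trying to avoid. Nor can you read $\chi(-\mathcal{K}_{\X_{\mathbf b}}) = k/N$ off from the normalisation $a_0+a_1+a_2=1$: that normalisation is the \emph{output} of part (c), obtained by choosing $k=N$ only after the three root variables are known, so using it as an input would be circular.

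The paper's proof is the direct computation you relegate to a parenthesis: $\alpha_0 = [C_0^1] - [\F_8]$ with $[C_0^1] = 2\h_x + \h_y - \F_1 - \F_4 - \F_5 - \F_6 - \F_7$, realised by the proper transform of the $(2,1)$-curve $2cy + (c-1)x + x^2 = 0$; its local equation in the $(u_7,v_7)$-chart is $2c(c-1)^2 + u_7 + (c-1)u_7v_7 = 0$, so it meets $D_0$ at $u_7 = -2c(c-1)^2$, while $F_8 \cap D_0$ sits at the $q_8$-location $u_7 = -\tfrac{2c}{N}\left(N + c^2N + 2c(1+n+\alpha-N)\right)$; integrating $\operatorname{res}_{D_0}\omega = k\, du_7/(4c^2)$ between these endpoints gives $k(\alpha+n+1)/N$. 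To complete your argument you must either carry out this computation (note that the $(2,1)$-curve here is simpler than the $(2,2)$-curve your $\chi(-\mathcal{K})$ route would require) or supply an independent general argument pinning down $\chi(-\mathcal{K}_{\X_{\mathbf b}})$, which neither your write-up nor the paper provides.
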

\begin{proof}
This is again a standard computation, but we present the calculation of $\chi(\alpha_0)$ since it is slightly more involved than others.
According to the usual method we express the symmetry root $\alpha_0$ as a difference of two effective divisors
\begin{equation}
\alpha_0 = [C_0^{1}] - [C_0^0],
\end{equation}
where $C_0^{0} = F_8$ and $C_0^{1}$ is the unique curve on $\X_{\mathbf{b}}$ whose class in $\Pic(\X_{\mathbf{b}})$ is 
\begin{equation}
[C_0^{1}] = 2\h_x + \h_y - \F_1 - \F_4 -\F_5 - \F_6 - \F_7. 
\end{equation}
By a straightforward calculation one finds that this divisor must be given by the proper transform of the curve defined in the affine $(x,y)$-chart by $2c y + (c-1) x + x^2 = 0$.
The component $D_k$ of the anticanonical divisor which intersects $C_0^1$ and $C_0^0$ is $D_0 = F_7 - F_8$, so we have immediately that the point $P_0 = D_0 \cap C_0^0$ is given in coordinates by 
\begin{equation}
P_0 : \quad(u_7, v_7) = \left( -2c\left(N+ c^2 N + 2c(\alpha -N + n + 1)/N  \right), 0 \right).
\end{equation}
To find the point $Q_0 = D_0 \cap C_0^1$, we compute the equation of $C_0^{1}$ in the $(u_7, v_7)$-chart to be
\begin{equation} \label{C01u7v7}
C_0^{1} : \quad 2c(c-1)^2 + u_7 + (c-1) u_7 v_7 = 0,
\end{equation}
so we find the intersection with $D_0 = F_7 - F_8$ by restricting its local equation \eqref{C01u7v7} to $v_7 = 0$ then solving for $u_7$ to obtain
\begin{equation}
Q_0 : \quad(u_7, v_7) = \left( -2 c (c-1)^2, 0 \right).
\end{equation}
We then have 
\begin{equation}
\chi(\alpha_0) =  \int_{P_0}^{Q_0} \operatorname{res}_{D_0} \omega = k \left[ \frac{u_7}{4 c^2} \right]^{u_7 =  -2 c (c-1)^2}_{u_7 = -2c\left(N+ c^2 N + 2c(\alpha -N + n + 1)/N\right)} = \frac{\alpha + n +1 }{N}.
\end{equation}
The other root variables are obtained by similar calculations.
\end{proof}

We now wish to obtain a birational mapping $(x,y) \mapsto (q,p)$ which provides an isomorphism between $\X_{\mathbf{b}}$ and $\tilde{\X}_{\mathbf{a}}$ realising the identification in Lemma \ref{initident}, with parameters $\alpha, n, N$ related to the root variables $a_0, a_1, a_2$ as in Lemma \ref{rootvarlemmalaguerre}. 
The method is identical to that in \cite{DFS19} and in the process we will find the relation between $c$ and $t$ necessary for this isomorphism to exist, which will give the correspondence of independent variables in the differential systems.

We begin by forming an Ansatz for the mapping noting that under our identification we have $\h_q = \h_x$ and $\h_p = \h_x + \h_y - \F_1 -\F_2$, which dictates the degrees of $q$, $p$ as rational functions of $x,y$:
\begin{equation} \label{Ansatz}
q = \frac{a_0 + a_1 x}{b_0 + b_1 x}, \qquad p = \frac{c_{00} + c_{10} x + c_{01} y + c_{11} x y}{d_{00} + d_{10} x + d_{01} y + d_{11} x y}.
\end{equation}
The condition $\h_p = \h_x + \h_y - \F_1 -\F_2$ means that $p$ should provide an affine coordinate on a pencil of curves on $\p^1 \times \p^1$ passing through $q_1$ and $q_2$, so in particular the numerator and denominator of the rational function giving $p$ in \eqref{Ansatz} should be indeterminate at both of these points.
This gives 
\begin{equation}
c_{00} = d_{00} = 0, \qquad c_{11}=d_{11} = 0.
\end{equation}
We now successively impose conditions from the identification to determine the rest of the coefficients, as well as confirm our parameter matching. 
There are many possible orders in which the conditions can be imposed, but we illustrate one set of choices that leads to fairly simple calculations for reasons we outline in the process.
Firstly, the matching of the surface roots $\delta_1, \tilde{\delta}_1$, namely $\h_x - \F_3 - \F_4 = \E_1 - \E_2$, means that the line $X=0$ should be sent under the birational mapping to $p_1: (Q,p) = (0,0)$. 
Rewriting the updated Ansatz in the relevant charts we have 
\begin{equation}
Q = \frac{b_0 X + b_1 }{a_0 X  + a_1}, \qquad p = \frac{c_{10}  + c_{01}X y }{ d_{10}  + d_{01} X y },
\end{equation}
so we require $b_1 = 0$, $c_{10} = 0$. 
Similarly, the matching of $\delta_5, \tilde{\delta}_5$ requires $\h_x - \F_1 - \F_2 = \E_3 - \E_4$, so the line $x=0$ should be sent to $p_3 : (q,P) = (0,0)$, which leads to $a_0 = 0$, $d_{01} = 0$.
Our refined Ansatz for the birational mapping is then
\begin{equation} \label{refinedAnsatz1}
q = A x, \qquad p = B \frac{ y }{x},
\end{equation}
where we have relabeled $A = a_1 / b_0$, $B = c_{01} / d_{10}$. 
We confirm that this gives the desired isomorphisms between components of the anticanonical divisors by calculations in charts. 
For example, to confirm the isomorphism between $E_1 - E_2$ and $H_x - F_3 -F_4$ (the proper transform of $\{X=0\}$), we rewrite \eqref{refinedAnsatz1} in charts $(X,y)$ for $\X_{\mathbf{b}}$ and $(\tilde{u}_1,\tilde{v}_1)$ for $\tilde{\X}_{\mathbf{a}}$, which gives
\begin{equation}
\tilde{u}_1 = \frac{1}{A B y}, \qquad \tilde{v}_1 = B X y. 
\end{equation}
Setting $X=0$ here gives $\tilde{v}_1=0$ with $\tilde{u}_1$ parametrised as a fractional-linear function of $y$, so we have the two copies of $\p^1$ in bijection.

We next check the matching of some other components of the anticanonical divisors, for example $\delta_4 = \h_y - \F_4 - \F_5 = \h_p - \E_3 - \E_4 = \tilde{\delta}_4$ requires the proper transforms of $\{Y=0\}$ and $\{P=0\}$ to be in bijection. 
Calculations in relevant charts reveal that the mapping \eqref{refinedAnsatz1} already provides this, and similarly for the surface roots $\delta_2, \delta_3$.
Matching $\delta_6$ and $\tilde{\delta}_6$ requires isomorphism between $F_6 - F_7$ and $E_6 - E_7$, so the point $q_6 : (u_5, v_5) = (-2c, 0)$ should be sent to $p_6 : (\tilde{u}_5, \tilde{v}_5) = (1,0)$. 
Rewriting \eqref{refinedAnsatz1} in the relevant charts we have 
\begin{equation}
u_5 = \frac{B}{A \tilde{u}_5}, \qquad v_5 = A \tilde{u}_5 \tilde{v}_5,
\end{equation}
so we require $B = - 2 c A$. 
Similarly from the matching of $\delta_6$ we require $q_7$ to be sent to $p_7$, which gives $A = 1/(c-1)$, so we have determined all coefficients in the Ansatz:
\begin{equation} \label{refinedAnsatz2}
q = \frac{t x}{c-1}, \qquad p = \frac{ 2c t y}{(1-c) x}.
\end{equation}

At this stage we have refined the mapping so that it provides an isomorphism everywhere between the surfaces, and now have only to check that the exceptional divisors $F_2, F_3, F_8$ are in bijection with $E_4, E_2, E_8$ respectively. 
For example, to check that $q_2 : (x,y) = (0, \alpha / N)$ is sent to $p_4 : (\tilde{u}_3, \tilde{v}_4) = (a_1, 0)$ as required, we rewrite \eqref{refinedAnsatz2} in relevant charts to find
\begin{equation}
\tilde{u}_3 = - \frac{2 c t^2 y }{(c-1)^2}, \qquad \tilde{v}_3 = \frac{(1-c) x}{2 c t y},
\end{equation}
so after substituting $q_2, p_4$ we require that 
\begin{equation}
a_1 = - \frac{2 c t^2 \alpha }{(c-1)^2 N},
\end{equation}
which with the matching of parameters and root variables in Lemma \ref{rootvarlemmalaguerre} leads to the relation 
\begin{equation} \label{indepmatching}
2c t^2=(c-1)^2 N.
\end{equation}
Similar calculations for $F_3, F_8$ reveal that with the root variables in Lemma \autoref{rootvarlemmalaguerre} as well as the matching \eqref{indepmatching} of $t$ and $c$, the correspondence \eqref{refinedAnsatz2} provides an isomorphism between surfaces $\X_{\mathbf{b}}$ and $\tilde{\X}_{\mathbf{a}}$.
This birational mapping provides a change of variables which identifies our differential system with the standard Hamiltonian form of $\pain{IV}$, which can be verified by direct calculation.

\begin{theorem} \label{changeofvarslaguerre}
The following change of variables identifies the differential system  \eqref{differential} from the semi-classical Laguerre weight with the standard Hamiltonian form \eqref{hamP4} of the fourth Painlev\'e equation:
\begin{equation}
\frac{x(c)}{c-1} = \frac{q(t)}{t}, \qquad  \frac{2c}{(c-1)} \frac{ y(c)}{x(c)}= - \frac{p(t)}{t}, \qquad 2c t^2=(c-1)^2 N,
\end{equation}
with parameters related by
\begin{equation}
a_0=\alpha + n + 1, \qquad a_1=-\alpha, \qquad  a_2=-n.
\end{equation}
\end{theorem}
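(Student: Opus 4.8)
My plan is to verify the stated change of variables by direct substitution, since nearly all of the work has already been done: the birational mapping \eqref{refinedAnsatz2}, the relation \eqref{indepmatching} between the independent variables, and the parameter correspondence were all \emph{forced} by the requirement that the map realise the Picard-lattice identification of Lemma~\ref{initident} and match the root variables computed in Lemma~\ref{rootvarlemmalaguerre}. What remains is to confirm that this change of variables does indeed carry the system \eqref{differential} to \eqref{hamP4}. Inverting \eqref{refinedAnsatz2} one has $x = (c-1)q/t$, and then, after one use of \eqref{indepmatching} to eliminate $(c-1)^2$, the convenient relation $y = -pq/N$; the parameter matching $a_0 = \alpha+n+1$, $a_1=-\alpha$, $a_2 = -n$ is precisely Lemma~\ref{rootvarlemmalaguerre}(c), which in particular respects the normalisation $a_0+a_1+a_2=1$ of \eqref{hamP4}.

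Concretely, I would first differentiate \eqref{indepmatching} implicitly to obtain the change of independent variable, which after one further use of \eqref{indepmatching} takes the clean form $\frac{dt}{dc} = \frac{(c-1)(c+1)N}{4c^2 t}$, equivalently $(c-1)\frac{dt}{dc} = \frac{(c+1)t}{2c}$. Writing $' = \frac{d}{dc}$ and $\dot{q},\dot{p}$ for $\frac{dq}{dt},\frac{dp}{dt}$, the chain rule then gives $x' = \frac{q}{t} + \frac{c+1}{2ct}(t\dot{q}-q)$, so that $\frac{x'}{c+1} = \frac{q}{t(c+1)} + \frac{\dot{q}}{2c} - \frac{q}{2ct}$, together with an analogous expression for $\frac{y'}{c+1}$ in terms of $\dot{q}$ and $\dot{p}$. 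Substituting $x = (c-1)q/t$, $y = -pq/N$ and these derivatives into the right-hand sides of \eqref{differential} and repeatedly clearing powers of $t$ via \eqref{indepmatching}, one sees that the explicit $c$- and $t$-dependent coefficients collapse — for instance several $q/t$-type terms in the first equation combine with total coefficient $(c-1)-2c+(c+1)=0$ — and the two equations of \eqref{differential} reduce respectively to $\dot{q} = \partial H/\partial p$ and $\dot{p} = -\partial H/\partial q$ for the Hamiltonian $H$ of \eqref{hamP4} with $a_1 = -\alpha$, $a_2 = -n$. This is a finite, entirely mechanical verification.

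The only genuine obstacle is the bookkeeping: the change of independent variable is governed by the implicit, quadratic relation \eqref{indepmatching}, so one must be disciplined about when to invoke it, the right-hand sides of the coupled Riccati system \eqref{differential} become moderately bulky rational expressions in $q,p,t$ after substitution, and one should bear in mind the usual sign conventions in the scalar and Hamiltonian forms of $\pain{IV}$ when matching with \eqref{hamP4}. There is no conceptual content at this last step, and in fact the reason the computation \emph{must} succeed is structural: the map \eqref{refinedAnsatz2} is by construction an isomorphism of the spaces of initial conditions carrying $\X_{\mathbf{b}}$ to $\tilde{\X}_{\mathbf{a}}$, matching the surface root bases, the anticanonical divisors, and — with the symplectic forms normalised by $k=N$ and $\tilde{k}=-1$ as in Lemma~\ref{rootvarlemmalaguerre} — the rational symplectic forms $\omega$ and $\tilde{\omega}$. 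Granting, as is established in the next subsection, that \eqref{differential} is itself Hamiltonian with structure holomorphic on the bundle $E$ and meromorphically extendable to its closure, the uniqueness results for Hamiltonian systems on Okamoto's spaces (through Lemma~\ref{symplecticlemma} and \cite{T1, T2, M}) force the transported system to coincide with the standard Hamiltonian form \eqref{hamP4} of $\pain{IV}$; this is an independent route to the theorem, and the one that genuinely explains it.
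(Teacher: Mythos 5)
Your proposal is correct and takes essentially the same route as the paper: the map, the relation $2ct^2=(c-1)^2N$ and the parameter matching are taken from the geometric derivation, the theorem itself is then established exactly by the chain-rule substitution you describe (your expressions for $dt/dc$, for $y=-pq/N$, and the $q/t$-cancellation with numerator $(c-1)-2c+(c+1)=0$ are all correct), and the uniqueness/Hamiltonian argument you append is the paper's own \autoref{section36}. One caveat if you carry the computation out literally: the first equation of \eqref{differential} as printed is inconsistent with the Hamiltonian $K$ of \autoref{section36} in the sign of the $N(c-1)x/(4c^2)$ term, and with the printed sign the substitution yields \eqref{hamP4} with $t$ replaced by $-t$ (a discrepancy the $q\to-q$, $p\to-p$, $t\to-t$ symmetry does not remove, since the change of variables depends only on $q/t$ and $p/t$), whereas with the $K$-consistent sign it lands exactly on \eqref{hamP4} with $a_1=-\alpha$, $a_2=-n$ --- so this is a sign typo in the paper rather than a gap in your method, but it is worth being aware of when performing the verification.
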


\begin{remark}
The relation between the independent variables $c$ and $t$ is consistent with the following well-known symmetry  $q \rightarrow -q$, $p \rightarrow -p$, $t \rightarrow -t$ of $\pain{IV}$, with the parameters $a_i$ unchanged.
\end{remark}

\begin{remark}
We could also use the methods of \cite{DFS19} and adjust the change of variables in \autoref{changeofvarslaguerre} by an element of the symmetry group $W(A_2^{(1)}) \rtimes \operatorname{Aut}(A_2^{(1)})$ of the surfaces such that the discrete dynamics from the system \eqref{discrete1} is matched with that of a standard discrete Painlev\'e equation on the same standard model of $E_6^{(1)}$-surfaces.
This will be a B\"acklund transformation of the differential system, so that the discrete and differential systems are simultaneously identified with the standard ones.
We can see the need for such an adjustment by noting that compared with \cite{KajNouYam:2017:GAOPE}, the evolution in $n$ of the root variables in \autoref{changeofvarslaguerre} does not match with that of the standard discrete Painlev\'e equation ($a_0$ should not depend on $n$). 
Therefore, although we have identified the differential system with the fourth Painlev\'e equation, the change of variables does not work for the discrete equations. 
Adjusting our identification by an appropriate symmetry we have another transformation:
\begin{equation}
\begin{aligned}
\frac{x_n}{c-1}&=\frac{q_n(q_n^2+(t-p_n)q_n+\alpha)}{t(q_n^2+(t-p_n)q_n-n-1)},\\
-\frac{2c y_n}{(c-1)x_n}&=\frac{q_n^3 p_n+(2+2n+\alpha+t p_n-p_n^2)q_n^2+(n+1)(t-2p_n) q_n-(n+1)^2}{t q_n(q_n^2+(t-p_n)q_n-n-1)},
\end{aligned}
\end{equation}
which identifies the discrete system \eqref{discrete} with the standard d-$\dPain{A_2^{(1)}/E_6^{(1)}}$ \cite{KajNouYam:2017:GAOPE} system
\begin{equation}
\begin{aligned}
q_{n+1}+q_n &=p_n-t-a_2/p_n, \\ 
p_n+p_{n-1} &=q_n+t+a_1/q_n,   
\end{aligned}
\end{equation}
with $a_0=-\alpha,$ $a_1=-1-n$, $a_2=2+n+\alpha$, $2ct^2 = (c-1)^2 N$. 
Since we have adjusted our identification in \autoref{changeofvarslaguerre} by a symmetry of $\pain{IV}$, the differential systems will also be identified.
\end{remark}

\subsection{Hamiltonian structure and justification of equivalence} \label{section36}

Since the isomorphism obtained above identifies the inaccessible divisors on the surfaces $\X_{\mathbf{b}}$ and $\tilde{\X}_{\mathbf{a}}$, this provides an isomorphism between the bundles (with parameters related as in \autoref{changeofvarslaguerre})
\begin{equation}
\varphi : E \longrightarrow \tilde{E}.
\end{equation}
Restricting to the fibres $E_t = \X_{\mathbf{b}} \backslash D_{\operatorname{red}}$, $E_c = \tilde{\X}_{\mathbf{a}} \backslash \tilde{D}_{\operatorname{red}}$, this transformation can be verified by direct calculation to be symplectic with respect to the normalised 2-forms $\omega$, $\tilde{\omega}$ in charts:
\begin{equation}
 \varphi_t^*( \delta p \wedge \delta q) = N \frac{\delta x \wedge \delta y}{x} ,
\end{equation}
where again $\delta$ is the exterior derivative on the fibre, so $t$ and $c$ are treated as constants in the calculation.
Noting that the symplectic forms are independent of $t$, $c$, from Lemma \ref{symplecticlemma} we automatically obtain the Hamiltonian structure of the system from the Laguerre weight:
\begin{theorem}
The differential system \eqref{differential} can be written in the Hamiltonian form 
\begin{equation}
\begin{gathered}
\frac{N}{x} \frac{dy}{dc} = \frac{\partial K}{\partial x }, \qquad \frac{N}{x} \frac{dx}{dc} = - \frac{\partial K}{\partial y }, \\
K(x,y,c) = \frac{N}{4 c^2} \left( \left((c^2-1)N - 2c \right) y + (c+1)(N y + n) x + 2c(c+1)\frac{ y (N y - \alpha)}{x} \right).
\end{gathered}
\end{equation}
Under the identification $\varphi$ of the total spaces $E$, $\tilde{E}$ we have the equality of 2-forms
\begin{equation}
N \frac{dx \wedge dy}{x} - d K \wedge d c = dp \wedge dq - dH \wedge d t,
\end{equation}
where $d$ is the exterior derivative on the total space, and $H$ is the Hamiltonian for $\pain{IV}$ as in system \eqref{hamP4}.
\end{theorem}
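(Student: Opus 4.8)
The statement makes two assertions: that the differential system \eqref{differential} is itself Hamiltonian with the explicit Hamiltonian $K$, and that on the total space the corresponding $2$-form coincides with the pullback of the canonical structure of $\Pain{IV}$. The plan is to deduce both from the symplectic isomorphism $\varphi : E \to \tilde{E}$ assembled in the preceding subsections. Recall that $\varphi$ is the birational map $(x,y)\mapsto(q,p)$ of Theorem~\ref{changeofvarslaguerre}, which carries \eqref{differential} to the standard Hamiltonian form \eqref{hamP4} and restricts on each fibre to a symplectomorphism with $\varphi_t^*(\delta p \wedge \delta q) = N\,\delta x \wedge \delta y / x$. Together with Lemma~\ref{symplecticlemma} this is essentially all that is needed, the uniqueness results for Hamiltonian systems on Okamoto's space recalled above serving only to guarantee that the standard Hamiltonian $H$ is really the object on the right-hand side.

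First I would transfer the canonical structure of $\Pain{IV}$ from the independent variable $t$ to $c$. Differentiating $2ct^2 = (c-1)^2 N$ and simplifying using $t^2 = (c-1)^2 N/(2c)$ gives $dt = \frac{(c+1)(c-1)N}{4c^2 t}\,dc$, and the chain rule shows that \eqref{hamP4}, read as a system in $c$, is again Hamiltonian with respect to $dp \wedge dq$, with Hamiltonian $\widehat{H}(q,p,c) = \frac{dt}{dc}\,H(q,p,t(c))$. Moreover $dp \wedge dq - dH \wedge dt = dp \wedge dq - d\widehat{H} \wedge dc$, since the discrepancy terms are proportional to $dt \wedge dc$ or $dc \wedge dc$ and hence vanish. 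Thus the right-hand side of the claimed $2$-form identity is already $dp \wedge dq - d\widehat{H}\wedge dc$.

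Next I would pull this back under $\varphi$, now regarded as a $c$-dependent symplectomorphism $(q,p)\mapsto(x,y)$ on the fibres. Since it sends $\delta p \wedge \delta q$ to the $c$-independent form $N\,\delta x \wedge \delta y/x$, Lemma~\ref{symplecticlemma} applies and produces a function $K(x,y,c)$, unique modulo functions of $c$, with $\varphi^*\!\left(dp\wedge dq - d\widehat{H}\wedge dc\right) = N\,\frac{dx \wedge dy}{x} - dK \wedge dc$, whose associated system is exactly $\frac{N}{x}\frac{dy}{dc} = \partial_x K$, $\frac{N}{x}\frac{dx}{dc} = -\partial_y K$. Because $\varphi$ intertwines \eqref{differential} and \eqref{hamP4}, this is precisely a Hamiltonian form of the Laguerre system, and the $2$-form identity holds with this $K$.

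It remains to compute $K$ explicitly. Writing each side as $-d$ of a $1$-form, $dp\wedge dq - d\widehat{H}\wedge dc = -d(q\,dp + \widehat{H}\,dc)$ and $N\,\frac{dx\wedge dy}{x} - dK\wedge dc = -d\!\big(N\,\frac{y\,dx}{x} + K\,dc\big)$, so $\varphi^*(q\,dp + \widehat{H}\,dc)$ differs from $N\,\frac{y\,dx}{x} + K\,dc$ by an exact $1$-form $dg$ on $\C^3$; substituting $q = \frac{tx}{c-1}$, $p = \frac{2cty}{(1-c)x}$ with $2ct^2 = (c-1)^2 N$ and matching the $dx,dy$-components recovers $g$, after which the $dc$-component yields $K$, which after elementary simplification of rational functions is the expression in the statement. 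As an independent check one can instead differentiate the stated $K$ and verify directly that $\frac{N}{x}\frac{dy}{dc} = \partial_x K$, $\frac{N}{x}\frac{dx}{dc} = -\partial_y K$ reproduce \eqref{differential}, the residual freedom of an additive function of $c$ being fixed by one coefficient. I expect the only genuine difficulty to be organisational: $K$ is \emph{not} simply $\widehat{H}\circ\varphi$, because $\varphi$ depends on $c$, so the generating-function correction coming from that $c$-dependence (and from the implicit relation $2ct^2 = (c-1)^2 N$) must be carried carefully through the pullback of the canonical $1$-form; once it is, everything reduces to routine computation.
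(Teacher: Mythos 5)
Your proposal is correct and follows essentially the same route as the paper: the paper likewise deduces the Hamiltonian structure automatically from the fact that $\varphi_t^*(\delta p \wedge \delta q) = N\,\delta x \wedge \delta y/x$ with both symplectic forms independent of the base variable, invoking Lemma~\ref{symplecticlemma}, and then exhibits $K$ by direct computation. Your extra care with the change of independent variable $2ct^2=(c-1)^2N$ and the remark that $K$ is not simply $H\circ\varphi$ are exactly the points implicit in the paper's verification.
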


By pulling back the symplectic atlas for the standard model of Okamoto's space under our isomorphism to the bundle $E$, we find that the differential system \eqref{differential} indeed has a Hamiltonian structure holomorphic on $E$ and extending meromorphically to its closure (as each of the Hamiltonians is rational).
Thus by the uniqueness result of \cite{T2} our isomorphism is guaranteed to identify the differential systems as we have seen in \autoref{changeofvarslaguerre}.

\section{Hypergeometric weight} \label{section4}

We now turn to the case of the system of differential equations \eqref{syst P6} from the hypergeometric weight \eqref{eq:hyp-weight}. 
The surfaces are the same as those constructed in \cite{DFS19} in the analysis of the discrete system \eqref{(2.3)}-\eqref{(2.4)}, and we recall the point configuration, surface root basis, and symplectic structure in Appendix \autoref{AppendixA}
, in order for the present paper to be self-contained.

We will recycle notation from the previous section, denoting the surfaces constructed from the system \eqref{syst P6} in $(x,y)$-coordinates by $\X_c$, irreducible components $D_k$ of the anticanonical divisor, with their union again being $D_{\operatorname{red}} = \bigcup_{k} D_k$.
The construction of the space of initial conditions gives the bundle 
\begin{equation} \label{bundlehypergeom}
\begin{gathered}
\rho : E \longrightarrow B, \\
\rho^{-1}(c) = \X_c \backslash D_{\operatorname{red}},
\end{gathered}
\end{equation}
where $B= \C \backslash \{0, 1 \}$ is the independent variable space for system \eqref{syst P6}, with the fixed singularities removed.
This again admits a uniform foliation by solution curves of the system \eqref{syst P6} transverse to the fibres.

We also recycle notation for the standard model of the relevant Sakai surfaces, namely those of surface type $D_4^{(1)}$ as presented in \cite{KajNouYam:2017:GAOPE}, which we recall in Appendix \autoref{AppendixB}. 
These provide spaces of initial conditions for a Hamiltonian system for $(f(t), g(t))$ equivalent to $\pain{VI}$, and we denote them by $\tilde{\X}_t$, with the inaccessible divisors denoted by $\tilde{D}_k$, their union being $\tilde{D}_{\operatorname{red}}$, and bundle 
\begin{equation}
\begin{gathered}
\tilde{\rho} : \tilde{E} \longrightarrow \tilde{B}, \\
\tilde{\rho}^{-1}(t) = \tilde{\X}_t \backslash \tilde{D}_{\operatorname{red}},
\end{gathered}
\end{equation}
where $\tilde{B} = \C \backslash \{0,1\}$ is obtained by removing the locations of fixed singularities of $\pain{VI}$ from the complex plane.

\subsection{Inaccessible singular points}

The calculation methods required to construct a space of initial conditions for the system \eqref{syst P6} are more involved than in \autoref{section3}. 
In particular, careful consideration has to be made to determine whether an indeterminacy of the vector field constitutes a singularity which requires blowing up in order to regularise the system.

As usual we take the phase space of the system \eqref{syst P6} initially to be the trivial bundle over $B = \C \backslash \{0, 1\}$ with fibre $\p^1 \times \p^1$ covered by affine charts $(x,y), (X,y), (x,Y), (X,Y)$ where $X =1/x, Y=1/y$. 
We begin by looking for points of indeterminacy of the vector field in the $(x,y)$-chart. 
Looking firstly for indeterminacies of the rational function $x'(c)$ we find precisely the points $q_1,\,q_2,\,q_3,\,q_4$ as in Appendix \autoref{AppendixA}.
Each of these singularities are resolved through exactly one blowup, which can be verified by computations in charts as usual.

However, when looking for indeterminacies of $y'(c)$ we also find an extra point, which we call $q_9$, given by 
\begin{equation}
q_9 : (x,y ) = \left( \frac{n+\alpha+\beta}{2}, -\frac{ n^2+2n(\alpha+\beta)+(\alpha-\beta)^2}{4} \right).
\end{equation}
The point $q_9$ lies on the curve $\Gamma$ (\ref{curve}) along with other points $q_1,\,q_2,\,q_3,\,q_4$, which is to be expected since $\Gamma$ is the vanishing locus of the denominators of both components of the vector field in the affine $(x,y)$-chart. 
We see that while $y'(c)$ has an indeterminacy at $q_9$, $x'(c)$  diverges (i.e., the denominator of $x'(c)$  vanishes while the numerator stays nonzero). We claim that this is an indication that this extra point is an inaccessible singularity: an indeterminacy of (one or more components of) the vector field, but with no solution starting from a regular point passing through. By regular here we mean where the vector field is regular, so classical ODE theorems guarantee existence and uniqueness of analytic solutions through these points. 

In the $(X,Y)$-chart we see the same points $q_5,\,\ldots,\,q_8$ as in Appendix \autoref{AppendixA}, and standard calculations show that the vector field is regular on the affine part of $F_8$ away from its intersection with the anticanonical divisor.
However, after the blowup of $q_6$ we see a second extra point on the exceptional line in addition to $q_7$, given by 
\begin{equation}
q_{10} : (u_6, v_6)  = (1, 0).
\end{equation}

We claim that these two extra points $q_9$ and $q_{10}$ are inaccessible singularities, so can be removed as part of the inaccessible divisors and we have the bundle $E$ as in \eqref{bundlehypergeom}.
We first consider the extra point $q_9$. 
If we assume that the  solutions of   (\ref{syst P6}) in the neighbourhood of the curve $\Gamma$ are single-valued and given by Taylor series around some $c_0 \in B$ (here we assume the Painlev\'e property), we can see that no solutions pass through the extra point $q_{9}$ transverse to the fibre over $c_0$. 
Indeed, let 
\begin{gather*}
x(c)= \sum_{i=0}^{\infty} a_i (c-c_0)^i,\;\; 
y(c)=\sum_{i=0}^{\infty} b_i (c-c_0)^i
\end{gather*}
with $(x(c_0),\,y(c_0))=(a_0,\,b_0)$  on the curve $\Gamma$, so these satisfy the relation 
\begin{equation}
b_0=\alpha\beta-a_0 (\alpha+\beta)-n a_0+a_0^2.
\end{equation}
Clearing denominators in (\ref{syst P6}), then substituting the above expansions in $c$ near $c_0$ we have
\begin{gather*}
(a_0-1)(a_0-\alpha)(a_0-\beta)(a_0-\gamma) +O(c-c_0)=0,\\
(a_0-1)(a_0-\alpha)(a_0-\beta)(a_0-\gamma)(n+\alpha +\beta -2a_0)  +O(c-c_0)=0.
\end{gather*}
These equalities of asymptotic expansions about $c_0$ may only be both balanced if $a_0=1,\,\alpha,\,\beta,\,\gamma$, i.e. such solutions may only pass through one of $q_1,\,\ldots,\,q_4$, and not the extra basepoint  $q_9$ at which $a_0 = (n + \alpha + \beta)/2$. While the second equality is balanced by $q_9$, the first is not, which reflects the fact that while $y'(c)$ is indeterminate at $q_9$, $x'(c)$  diverges.

We now consider the second extra point $q_{10}$, so we rewrite system \eqref{syst P6} in the $(u_6,v_6)$-chart, which recall is defined by
\begin{equation}
x=\frac{1}{v_6}, \quad y =\frac{1}{u_6 v_6^2}.
\end{equation}
Computing the system in this chart we see that at $q_{10}$ the denominator of $v_6'(c)$  vanishes while the numerator stays finite and nonzero. 
Again, this is an indication that this extra point is inaccessible. 
We proceed as before: if we assume that the only solutions of the system in the neighbourhood of the exceptional divisor $F_6$ are single-valued (assuming the Painlev\'e property), we can see that the only point they may pass through is $q_7$, and no solutions pass through the extra point $q_{10}$.  
Consider a solution given by an expansion about $c=c_0$:
\begin{gather*}
u_6(c)= \sum_{i=0}^{\infty} a_i (c-c_0)^i,\;\; 
v_6(c)=\sum_{i=1}^{\infty} b_i (c-c_0)^i.
\end{gather*}
After finding the rational functions $u_6', v_6'$, we substitute these expansions and obtain
\begin{gather*}
- 2 (a_0 - 1)(c_0 - (c_0 - 1)a_0)+ O(c-c_0)=0,\\
(c_0 - 1)(b_1 c_0 - 1)a_0^2 - c_0(2-(c_0-1)b_1)a_0 + c_0 +O(c-c_0)=0.
\end{gather*}
We see that the first line is balanced when $a_0 = 1$, which corresponds to $q_{10}$, but upon substitution of this into the second line the leading term is equal to $-1$, so again we see that $q_{10}$ is an inaccessible singularity.
 
To summarize, in order to arrive at a space of initial conditions we need to differentiate between points where one or more
components of the vector field diverge and points where either both components have
indeterminacies or one is indeterminate and the other is regular. So we need to identify which of the points   have the following properties:
\begin{enumerate}[(i)]
\item the vector field corresponding has an  indeterminacy of both components;

\item the vector field  has an  indeterminacy in one  component, but is finite in the other;

\item the vector field  has an  indeterminacy in one  component but diverges in the other.

\end{enumerate}
Only Case (i) or Case (ii) can possibly constitute accessible singularities, while Case (iii) can be neglected, and
  these points may be removed as part of the inaccessible divisors.

\begin{remark}
There are also other types of indeterminacies of differential systems which do not require blowing up to arrive at a space of initial conditions, though in our experience these arise only in rather artificial examples.
For example, consider the system 
\begin{equation} \label{betagamma}
\begin{aligned}
\zeta' &= \frac{ (1+\zeta)(a_1 + a_2 + a_1 \zeta) + 2t(1+\zeta) \eta - 3 \zeta \eta^2 }{\eta}, \\
\eta' &= \frac{a_2(1+\zeta) + t (1+\zeta) \eta + (1-\zeta) \eta^2} {1+\zeta},
\end{aligned}
\end{equation}
for $\zeta(t), \eta(t)$, which is equivalent to $\pain{IV}$ in the Hamiltonian form \eqref{hamP4} via the birational transformation
\begin{equation}
q = \frac{ \eta }{ 1+ \zeta}, \qquad p = \eta.
\end{equation}
We see that $\zeta'$ given by \eqref{betagamma} is indeterminate when $( \zeta, \eta) = ( -1 - a_2 / a_1, 0)$, while $\eta'$ is regular. 
However, this corresponds in $(q,p)$-coordinates to a point where the Hamiltonian system \eqref{hamP4} is regular, and there is no infinite family of solutions passing through it at a fixed value of $t$, so no blowup is required.
This is also the case for the system for $\zeta, \eta$, which can be seen through classical Painlev\'e analysis to establish that this indeterminacy does not require blowing up since there is no infinite family of solutions to separate in order to achieve a uniform foliation. 
\end{remark}

\subsection{The change of variables from initial identification on the level of the Picard lattice}

In \cite{DFS19} it was shown in detail how to transform the discrete system  (\ref{(2.3)})-(\ref{(2.4)}) to the standard form of a discrete Painlev\'e equation of surface type $D_4^{(1)}$ as it appears in \cite{KajNouYam:2017:GAOPE}.  
The change of variables
\begin{equation}\label{transf}
\begin{gathered}
f=-\frac{(x-\beta)(x-\gamma)}{c(n x-\alpha\beta+(\alpha+\beta)x-x^2 +y)},\\
g=-\frac{(x-\gamma)(\beta(\gamma-\beta-\alpha c)+(\alpha+\beta)c x-\gamma x-(c-1)x^2+n(x-\beta+c x)+cy)}{(x-\beta)(x-\gamma)+c(n x-\alpha\beta+(\alpha+\beta)x-x^2+y)},
\end{gathered}
\end{equation}
will not only transform the discrete system to the standard form, but also identify the differential system \eqref{syst P6} with a first-order system equivalent to the sixth Painlev\'e equation. 
The result is the following, which is proved by direct calculation.

\begin{theorem}\label{theorem P6 final} \cite{DFS19}
Let $x=x(c)$ and $y=y(c)$ solve (\ref{syst P6}). Take $t=1/c$ and define functions $f=f(t)$, $g=g(t)$  by  (\ref{transf}). 
Then  system (\ref{syst P6})  is transformed to system
\begin{eqnarray}\label{fgfinal}
t(t-1)f'&=&(1+2n+\alpha+2\beta-2\gamma+2g)f^2+(\beta-\gamma+2g)t\\\nonumber
&&-f(1+n(1+ t)+\alpha+\beta+2t \beta-2\gamma-t\gamma+2(1+t)g),\\\nonumber
t(t-1)f g'&=&t (\beta-\gamma+g)g-(1+n+\beta-\gamma+g)(n+\alpha+\beta-\gamma+g)f^2,
\end{eqnarray}
which is equivalent to the sixth Painlev\'e equation for the function $f$ given by
\begin{eqnarray}\label{eq P6} 
f'' &=&\frac{1}{2}\left(\frac{1}{f}+\frac{1}{f-1}+\frac{1}{f-t}\right)f'^2-\left(\frac{1}{t}+\frac{1}{t-1}+\frac{1}{f-t}\right)f'\\\nonumber
&&+\frac{f(f-1)(f-t)}{t^2(t-1)^2}\left(A+B \frac{ t}{f^2}+C \frac{t-1}{(f-1)^2}+D\frac{t(t-1)}{(f-t)^2}\right),
\end{eqnarray}
 with parameters 
\begin{equation}\label{par P6}
A=\frac{1}{2}(\alpha-1)^2,\,\,B=-\frac{1}{2}(\beta-\gamma)^2,\,\,C=\frac{1}{2}(n+\beta)^2,\,\,D=\frac{1}{2}-\frac{1}{2}(n+\alpha-\gamma)^2.
\end{equation}
\end{theorem}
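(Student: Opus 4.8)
The plan is to follow the same route as in \autoref{section3}, now starting from the space of initial conditions for \eqref{syst P6} which, as explained in the previous subsection, coincides with the family of $D_4^{(1)}$ Sakai surfaces constructed in \cite{DFS19} and recalled in Appendix \autoref{AppendixA}: one blows up $\p^1 \times \p^1$ at the eight points $q_1, \dots, q_8$, discards the inaccessible points $q_9$, $q_{10}$, and removes the components of the anticanonical divisor. The first step is to record the identification on the level of Picard lattices between $\X_c$ and the standard model $\tilde{\X}_t$ from Appendix \autoref{AppendixB}, matching the surface root bases, preserving the intersection form, and preserving effectiveness of divisor classes, exactly as in Lemma \ref{initident}. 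Since these are precisely the surfaces of \cite{DFS19}, both this identification and a birational map realising it are already available there, and the map is \eqref{transf}; the relation $t = 1/c$ between independent variables is forced by matching the locations of the fixed singularities of the two systems.

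With the birational map in hand, the next step is to verify by direct substitution that \eqref{transf} transforms \eqref{syst P6} into \eqref{fgfinal}: treating $f$, $g$ as the rational functions of $(x, y, c)$ given by \eqref{transf}, one differentiates in $c$, substitutes $x' = P_1/Q$ and $y' = P_2/Q$ from \eqref{syst P6}, uses $t = 1/c$, and simplifies, checking that $t(t-1) f'$ and $t(t-1) f g'$ reduce to the stated right-hand sides. This is a routine but lengthy computer-algebra computation of the same nature as \autoref{changeofvarslaguerre}. The parameter correspondence \eqref{par P6} is then obtained via the period mapping, as in Lemma \ref{rootvarlemmalaguerre}: one pulls back the symmetry root basis of the standard $D_4^{(1)}$ model under the identification, writes each symmetry root as a difference of classes of irreducible curves, and evaluates the corresponding residue integrals of the (suitably normalised) symplectic form along the relevant components of the anticanonical divisor. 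This yields the root variables of $\X_c$ in terms of $\alpha$, $\beta$, $\gamma$, $n$, which translate into $A, B, C, D$ through the standard dictionary relating $D_4^{(1)}$ root variables to the coefficients in \eqref{eq P6}.

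To justify that \eqref{fgfinal} is genuinely equivalent to $\Pain{VI}$, and not merely of a similar shape, I would argue as in \autoref{section36}: exhibit a Hamiltonian structure for \eqref{fgfinal} in the $(f,g)$-chart, check that the isomorphism of surfaces extends to an isomorphism of the bundles $E$ and $\tilde{E}$ under which the symplectic form is preserved, and verify (by pulling back a symplectic atlas for Okamoto's space of $\Pain{VI}$) that the resulting Hamiltonian is holomorphic on $\tilde{E}$ and extends meromorphically to its closure; the uniqueness theorem of \cite{M} then forces \eqref{fgfinal} to coincide with the standard Hamiltonian form of $\Pain{VI}$. Finally, since the first equation of \eqref{fgfinal} is linear in $g$, one solves it for $g$ as a rational function of $(f, f', t)$, substitutes into the second equation, and simplifies to obtain the scalar equation \eqref{eq P6} with parameters \eqref{par P6}. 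The main obstacle is computational rather than conceptual: verifying that \eqref{transf} defines an isomorphism of surfaces (not just a birational map) requires chart-by-chart checks including the infinitely-near points in the cascades, and the direct verification that it carries \eqref{syst P6} to \eqref{fgfinal} involves heavy symbolic simplification in which the relation $t = 1/c$ and the equation of the anticanonical curve must be used carefully; a further subtlety is pinning down the normalisation of the symplectic form that makes the period mapping reproduce exactly the parameters in \eqref{par P6}, consistently with the $D_4^{(1)}$ parameter normalisation.
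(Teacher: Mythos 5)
Your proposal is correct and takes essentially the same route as the paper: the paper proves this theorem simply by direct calculation, substituting the change of variables \eqref{transf} (imported from \cite{DFS19}) with $t=1/c$ into \eqref{syst P6} to verify \eqref{fgfinal} and then eliminating $g$ to get \eqref{eq P6} with parameters \eqref{par P6}, which is exactly the core of your argument, while your geometric scaffolding (Picard-lattice identification, period-mapping parameter matching, Hamiltonian structure and uniqueness) mirrors what the paper carries out in the surrounding parts of Section 4 and in \cite{DFS19}. Two minor points: for $\Pain{VI}$ the uniqueness result the paper invokes is that of \cite{T1} rather than \cite{M}, and the relation $t=1/c$ emerges from the construction of the birational map (matching points such as $q_7$) rather than being forced merely by matching fixed singularities, since any M\"obius permutation of $\{0,1,\infty\}$ would do that; neither affects the validity of your argument.
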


However, we could also use the initial identification on the level of Picard lattices described in  \cite{DFS19} to find a birational isomorphism which provides a transformation from the differential system  (\ref{syst P6}) to the standard form of the sixth Painlev\'e equation. 
This initial identification is provided by the following Lemma, which can be verified by direct calculation. 


\begin{lemma}\cite{DFS19}\label{lem:change-basis-pre}
	The following identification of $\Pic(\X_c)$ and $\Pic(\tilde{\X}_{t})$ matches the surface root bases in \autoref{surfacerootshypergeom} and \autoref{surfacerootsP6}, and preserves the intersection form:
	\begin{align*}
		\mathcal{H}_{x} & = \mathcal{H}_{g}, & \qquad 
		\mathcal{H}_{f} & = 2\mathcal{H}_{x} + \mathcal{H}_{y} - \mathcal{F}_{3} - \mathcal{F}_{4} - \mathcal{F}_{5} - \mathcal{F}_{6},\\
		\mathcal{H}_{y} & = \mathcal{H}_{f} + 2\mathcal{H}_{g}  - \mathcal{E}_{3} -\mathcal{E}_{4} - \mathcal{E}_{5} - \mathcal{E}_{6}, & 
		\qquad 	\mathcal{H}_{g} &= \mathcal{H}_{x},\\
		\mathcal{F}_{1} & = \mathcal{E}_{1}, & \qquad \mathcal{E}_{1} & = \mathcal{F}_{1},\\
		\mathcal{F}_{2} & = \mathcal{E}_{2}, & \qquad \mathcal{E}_{2} & = \mathcal{F}_{2},\\
		\mathcal{F}_{3} & = \mathcal{H}_{g} - \mathcal{E}_{6}, & \qquad \mathcal{E}_{3} & = \mathcal{H}_{x} - \mathcal{F}_{6},\\
		\mathcal{F}_{4} & = \mathcal{H}_{g} - \mathcal{E}_{5}, & \qquad \mathcal{E}_{4} & = \mathcal{H}_{x} - \mathcal{F}_{5},\\
		\mathcal{F}_{5} & = \mathcal{H}_{g} - \mathcal{E}_{4}, & \qquad \mathcal{E}_{5} & = \mathcal{H}_{x} - \mathcal{F}_{4},\\
		\mathcal{F}_{6} & = \mathcal{H}_{g} - \mathcal{E}_{3}, & \qquad \mathcal{E}_{6} & = \mathcal{H}_{x} - \mathcal{F}_{3},\\
		\mathcal{F}_{7} & = \mathcal{E}_{7}, & \qquad \mathcal{E}_{7} & = \mathcal{F}_{7},\\
		\mathcal{F}_{8} & = \mathcal{E}_{8}, & \qquad \mathcal{E}_{8} & = \mathcal{F}_{8}.
		\end{align*}
\end{lemma}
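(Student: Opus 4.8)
The plan is to treat this as a finite linear-algebra verification, entirely parallel to the proof of Lemma \ref{initident} in the semi-classical Laguerre case. We must produce a $\Z$-linear isomorphism between $\Pic(\X_c)$ and $\Pic(\tilde{\X}_{t})$ carrying the surface root basis of \autoref{surfacerootshypergeom} to that of \autoref{surfacerootsP6} and intertwining the two copies of the intersection form \eqref{eq:int-form}. Since each Picard lattice is freely generated over $\Z$ by the classes of the total transforms of the two rulings together with the eight exceptional classes, and the Gram matrix in these generators is the standard one, it suffices to specify the images of the ten generators and then check finitely many numerical identities.

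First I would fix the images of the exceptional classes that are forced by the combinatorics of the point configurations recalled in Appendix \autoref{AppendixA} and Appendix \autoref{AppendixB}: on both sides the non-cascade blowups and the terminal points of the cascades play the same role, so we set $\mathcal{F}_{1} = \mathcal{E}_{1}$, $\mathcal{F}_{2} = \mathcal{E}_{2}$, $\mathcal{F}_{7} = \mathcal{E}_{7}$, $\mathcal{F}_{8} = \mathcal{E}_{8}$, each being an effective $(-1)$-class meeting the same anticanonical component with multiplicity one. Next I would transcribe the conditions $\delta_{i} = \tilde{\delta}_{i}$ from the two surface root figures as linear relations among the remaining generators $\mathcal{H}_{x}, \mathcal{H}_{y}, \mathcal{F}_{3}, \ldots, \mathcal{F}_{6}$ and $\mathcal{H}_{f}, \mathcal{H}_{g}, \mathcal{E}_{3}, \ldots, \mathcal{E}_{6}$. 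Because the surface type is $D_{4}^{(1)}$, whose affine Dynkin diagram is a star with four length-one legs attached to a central node, these relations propagate outward along the legs: the condition at the central node forces $\mathcal{H}_{x} = \mathcal{H}_{g}$ and, once the $\mathcal{E}_{j}$ are determined, fixes $\mathcal{H}_{y}$ in terms of $\mathcal{H}_{f}, \mathcal{H}_{g}$ and the $\mathcal{E}_{j}$, while the conditions along the legs express each of $\mathcal{F}_{3}, \ldots, \mathcal{F}_{6}$ as $\mathcal{H}_{g} - \mathcal{E}_{j}$ for the appropriate $j$. Solving this system recursively yields the table in the statement, and the inverse follows by running the same procedure with the roles of the two surfaces exchanged.

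With a candidate isomorphism in hand the remaining checks are mechanical: verify that the proposed images of $\mathcal{H}_{x}, \mathcal{H}_{y}, \mathcal{F}_{1}, \ldots, \mathcal{F}_{8}$ reproduce the self-intersections and pairings dictated by \eqref{eq:int-form} --- equivalently, that the associated $10 \times 10$ change-of-basis matrix preserves the standard Gram matrix --- and confirm that the displayed formulas are mutually inverse; both are one-off computations. The only genuine subtlety, exactly as in the Laguerre case, is that matching the surface roots and the intersection form does not determine the isomorphism uniquely (one may still compose with an element of the symmetry group $W(D_{4}^{(1)}) \rtimes \operatorname{Aut}(D_{4}^{(1)})$) and does not \emph{a priori} guarantee that effectiveness of divisor classes is preserved. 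I would not attempt to verify effectiveness here; following the remark after Lemma \ref{initident}, any failure would reveal itself as an obstruction to realising the identification by a birational map in the next subsection, at which point the choice would be revised. Thus the ``hard part'' is less a matter of proof difficulty than of bookkeeping: choosing the pairing of the legs --- i.e.\ which of $\mathcal{F}_{3}, \ldots, \mathcal{F}_{6}$ corresponds to which $\mathcal{H}_{g} - \mathcal{E}_{j}$ --- so that the subsequent construction of the change of variables \eqref{transf}, and hence \autoref{theorem P6 final}, goes through.
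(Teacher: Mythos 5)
Your proposal is correct and matches the paper's approach: the paper offers no more than ``verified by direct calculation'' (citing \cite{DFS19}), and the intended argument is exactly the recursive determination-plus-Gram-matrix check you describe, parallel to the proof of Lemma \ref{initident}, with effectiveness deferred to the construction of the birational map. One small imprecision: the central-node condition $\delta_2=\tilde{\delta}_2$ only forces $\mathcal{F}_6=\mathcal{H}_g-\mathcal{E}_3$ (given $\mathcal{F}_7=\mathcal{E}_7$); the relation $\mathcal{H}_x=\mathcal{H}_g$ is one of the additional choices (like $\mathcal{F}_1=\mathcal{E}_1$, $\mathcal{F}_2=\mathcal{E}_2$) constrained by the intersection form, which is consistent with the symmetry-group ambiguity you already acknowledge and does not affect the mechanical verification of the displayed identification.
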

Using this identification we obtain the choice of symmetry root basis corresponding to the standard one, which we present in \autoref{symmetryrootshypergeom}.
Using this we compute the root variables for this symmetry root basis, which will yield the parameter correspondence between the systems under this identification.


\begin{lemma} \label{rootvarsinit}
\begin{enumerate}[(i)]
\item Using the symplectic form \eqref{omegahypergeom}, the values of the period mapping on the symmetry root basis in \autoref{symmetryrootshypergeom} are given by
\begin{equation}
\begin{gathered}
\chi(\alpha_0) = k(\gamma - \alpha - n), \qquad \chi(\alpha_1) = k(\alpha -1), \qquad \chi(\alpha_2) = k(1- \gamma), \\
 \chi(\alpha_3) = k(\beta + n), \qquad \chi(\alpha_4) = k(\gamma - \beta).
\end{gathered}
\end{equation}
\item Normalising the symplectic form \eqref{omegahypergeom} to $k=1$, the root variables for the symmetry root basis in \autoref{symmetryrootshypergeom} are given by
\begin{equation}
a_0 = \gamma - \alpha - n, \quad a_1 = \alpha -1, \quad a_2 = 1- \gamma, \quad a_3 = \beta + n, \quad a_4 = \gamma - \beta.
\end{equation}

\end{enumerate}
\end{lemma}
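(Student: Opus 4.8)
The plan is to compute each value $\chi(\alpha_i)$ directly on the surface $\X_c$ by the residue procedure recalled in \autoref{sec332}: for a symmetry root written as a difference $\alpha_i = [C_i^1] - [C_i^0]$ of classes of irreducible curves, one locates the unique component $D_{k(i)}$ of the anticanonical divisor with $[D_{k(i)}]\cdot[C_i^0] = [D_{k(i)}]\cdot[C_i^1] = 1$, and then evaluates
\begin{equation*}
\chi(\alpha_i) = \int_{P_i}^{Q_i} \operatorname{res}_{D_{k(i)}} \omega, \qquad P_i = D_{k(i)} \cap C_i^0, \quad Q_i = D_{k(i)} \cap C_i^1,
\end{equation*}
with $\omega$ the rational symplectic form in \eqref{omegahypergeom}. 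Since the symmetry type for the hypergeometric surfaces is $D_4^{(1)}$, there are five roots $\alpha_0, \dots, \alpha_4$ as in \autoref{symmetryrootshypergeom}, each expressed via Lemma \ref{lem:change-basis-pre} in terms of $\h_x, \h_y, \F_1, \dots, \F_8$, so the computation must be carried out for each of the five in turn; part (ii) then follows from part (i) by setting $k=1$.

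First I would, for each $i$, produce an explicit effective representative of $\alpha_i$. Roots of the form $\F_j - \F_\ell$ relating consecutive exceptional curves in a blowup cascade are immediate, while the remaining roots are of conic type (classes like $2\h_x + \h_y - \F_{j_1} - \F_{j_2} - \F_{j_3} - \F_{j_4}$), and for these I would determine the defining polynomial of the unique curve in $\p^1 \times \p^1$ of the appropriate bidegree through the prescribed base points, using the locations tabulated in Appendix \autoref{AppendixA} (including the infinitely near points in the cascades). Next I would compute the residues $\operatorname{res}_{D_k} \omega$ along each component $D_k$ of the anticanonical divisor, chart by chart, exactly as in part (a) of Lemma \ref{rootvarlemmalaguerre}: most of these residues vanish, and the nonzero ones turn out to be exact differentials $k\, du$ or $k\, dy$ of chart coordinates, so that the integrals become trivial to evaluate once the endpoints are known. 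Restricting the defining equations of $C_i^0, C_i^1$ to the relevant component $D_{k(i)}$ pins down $P_i$ and $Q_i$ in coordinates, and the integral produces $\chi(\alpha_i)$ as a linear expression in $\alpha, \beta, \gamma, n$ with an overall factor $k$. As a consistency check I would verify that $\chi$ of the null root of the $D_4^{(1)}$ root system agrees with $\chi(-\mathcal{K}_{\X_c})$ computed directly, and that the resulting root variables are compatible with the $\pain{VI}$ parameters \eqref{par P6} of Theorem \ref{theorem P6 final} (i.e. $A = \tfrac12 a_1^2$, $B = -\tfrac12 a_4^2$, $C = \tfrac12 a_3^2$, $D = \tfrac12 - \tfrac12 a_0^2$).

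The main obstacle is the bookkeeping involved in identifying the conic-type curves $C_i^1$ and in tracking them, together with their intersections with the anticanonical components, through the cascades of blowups over the base points $q_3, \dots, q_6$ — in particular, confirming that each such curve passes through exactly the prescribed (possibly infinitely near) points and meets the chosen component $D_{k(i)}$ transversally at a single point, which is what makes the residue formula applicable. Everything after that — evaluating the residues in charts and computing the elementary integrals — is routine.
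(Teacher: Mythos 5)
Your proposal is correct and is essentially the paper's own (implicit) argument: the paper gives no written proof of this lemma, treating it as the same standard residue computation of the period mapping that is illustrated for the Laguerre case in Lemma~\ref{rootvarlemmalaguerre}, carried out with the symplectic form \eqref{omegahypergeom} and the point data of Appendix~\ref{AppendixA}, exactly as you outline. One cosmetic correction: here the roots $\alpha_1,\alpha_2,\alpha_4$ are differences of exceptional classes over the four \emph{distinct} points $q_1,\dots,q_4$ on the conic $\Gamma$ (not consecutive members of a blowup cascade), and $\alpha_0,\alpha_3$ are not effective as written so must be split as differences such as $(\h_y-\F_3)-\F_4$; in all five cases the integration is of the residue $-k\,dx$ of $\omega$ along the proper transform of $\Gamma$ (the component $D_1$), between the base points involved, which the general framework you state at the outset already covers.
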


 \begin{figure}[H]
 \begin{equation*}  
	\raisebox{-32.1pt}{\begin{tikzpicture}[
			elt/.style={circle,draw=black!100,thick, inner sep=0pt,minimum size=2mm}, scale=1]
		\path 	(-1,1) 	node 	(d0) [elt, label={[xshift=-12pt, yshift = -10 pt] $\alpha_{0}$} ] {}
		        (-1,-1) node 	(d1) [elt, label={[xshift=-12pt, yshift = -10 pt] $\alpha_{1}$} ] {}
		        ( 0,0) 	node  	(d2) [elt, label={[xshift=13pt, yshift = -12 pt] $\alpha_{2}$} ] {}
		        ( 1,1) 	node  	(d3) [elt, label={[xshift=12pt, yshift = -10 pt] $\alpha_{3}$} ] {}
		        ( 1,-1) node 	(d4) [elt, label={[xshift=12pt, yshift = -10 pt] $\alpha_{4}$} ] {};
		\draw [black,line width=1pt ] (d0) -- (d2) -- (d1)  (d3) -- (d2) -- (d4);
	\end{tikzpicture}} \qquad
			\begin{alignedat}{2}
			\alpha_{0} &= \h_y - \mathcal{F}_{3} - \mathcal{F}_{4}, &\qquad  \alpha_{3} &= 2\h_x - \h_y -\F_{345678},\\
			\alpha_{1} &= \F_1 - \F_2, &\qquad  \alpha_{4} &= \F_3-\F_4.\\
			\alpha_{2} &= \mathcal{F}_{4} - \mathcal{F}_{1},
			\end{alignedat}
\end{equation*}
\caption{The initial choice of Symmetry Root Basis for the hypergeometric weight}
 \label{symmetryrootshypergeom}
 \end{figure}

To find the isomorphism between surfaces $\X_{\mathbf{b}}$ and $\tilde{\X}_{\mathbf{a}}$ which realises the initial identification in Lemma \ref{lem:change-basis-pre}, the technique is the same as was used in \autoref{section3} for the semi-classical Laguerre weight (see also \cite{DFS19} for the isomorphism realising the final adjusted identification that matches the discrete dynamics). 
In the process we find the matching of the independent variables $t$ and $c$ to be given by $c t = 1$, and this again provides a change of variables from the system \eqref{syst P6} to the standard Hamiltonian form of $\pain{VI}$.
\begin{theorem}\label{theorem P6 initial}
Let $x=x(c)$ and $y=y(c)$ solve (\ref{syst P6}). Take $t=1/c$ and define the functions $f=f(t)$, $g=g(t)$  by  
\begin{gather}\label{transf initial}
f=-\frac{(x-\beta)(x-\gamma)}{c(n x-\alpha\beta+(\alpha+\beta)x-x^2 +y)},\;\;\;
g= \gamma-x.
\end{gather}
Then the system (\ref{syst P6})  is transformed to 
\begin{eqnarray}\label{fginitial}
t(t-1)f'&=& (1+\alpha-2\gamma+2g)f^2+t(\beta-\gamma+2g)\\\nonumber
&&+(n(t-1)-1-\alpha-\beta+\gamma(2+t)-2(t+1)g)f ,\\\nonumber
t(t-1)f g'&=&(\gamma-1-g)(\alpha-\gamma+g)f^2+t(\beta-\gamma+g)g ,
\end{eqnarray}
which is precisely the standard Hamiltonian form \eqref{fgP6} of $\pain{VI}$ with parameters
\begin{equation} \label{P6paramsinit}
a_0 =  \gamma - \alpha - n, ~~ a_1 = \alpha - 1 , ~~a_2 = 1 - \gamma, ~~ a_3 = \beta + n, ~~ a_4 = \gamma - \beta.
\end{equation}

\end{theorem}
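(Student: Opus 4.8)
The plan is to mirror the treatment of the semi-classical Laguerre weight in \autoref{section3}: construct an explicit birational map $(x,y)\mapsto(f,g)$ realising the Picard-lattice identification of Lemma \ref{lem:change-basis-pre}, read off the correspondence $t=1/c$ of independent variables in the process, and then use the uniqueness of Hamiltonian systems on Okamoto's space of type $D_4^{(1)}$ \cite{M, T2} to conclude that the image of \eqref{syst P6} is precisely the standard Hamiltonian form \eqref{fgP6} of $\pain{VI}$, with parameters predicted by the period mapping.

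First I would set up an Ansatz dictated by the classes appearing in Lemma \ref{lem:change-basis-pre}: since $\h_g=\h_x$ the function $g$ must be fractional-linear in $x$ alone, and since $\h_f=2\h_x+\h_y-\F_3-\F_4-\F_5-\F_6$ the function $f$ must be a ratio of two polynomials of bidegree $(2,1)$ in $(x,y)$, each vanishing at the four base points $q_3,q_4,q_5,q_6$ of Appendix \autoref{AppendixA}. Imposing those base-point conditions kills most of the free coefficients; I would then successively impose the remaining matchings --- that $f$ and $g$ carry the anticanonical configuration of \autoref{surfacerootshypergeom} onto that of \autoref{surfacerootsP6}, and that the base points $q_1,q_2,q_7,q_8$ are sent to the appropriate points of the standard model in Appendix \autoref{AppendixB} --- which fixes all remaining coefficients apart from a relation between $c$ and $t$. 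Matching the infinitely-near points in the cascades then forces $ct=1$, and one checks that the resulting map is exactly \eqref{transf initial}, with $g=\gamma-x$ emerging as the simplest representative of the pencil of curves of class $\h_g$.

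Two things then remain. The parameter correspondence \eqref{P6paramsinit} is already supplied by Lemma \ref{rootvarsinit}: normalising the symplectic form \eqref{omegahypergeom} to $k=1$ as there, the map is symplectic with respect to the normalised 2-forms on both families of surfaces, so the root variables of the standard $\pain{VI}$ model coincide with the values of the period mapping computed in that Lemma. To see that the vector field itself --- and not merely the surface together with its symplectic form --- is carried to the standard one, I would pull back the symplectic atlas of the $D_4^{(1)}$ Okamoto space under the map, check that the transformed system acquires rational Hamiltonians holomorphic on the total space of the bundle \eqref{bundlehypergeom} exactly as in \autoref{section36} via Lemma \ref{symplecticlemma}, and invoke the uniqueness result of \cite{M}. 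Equivalently, and as in \cite{DFS19}, one may simply substitute \eqref{transf initial} into \eqref{syst P6}, clear denominators, and verify directly that \eqref{fginitial} results --- this being the standard Hamiltonian form \eqref{fgP6} of $\pain{VI}$ with parameters \eqref{P6paramsinit} --- and that eliminating $g$ recovers the scalar form \eqref{eq P6} with \eqref{par P6}.

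The main obstacle is bookkeeping rather than anything conceptual: compared with the $E_6^{(1)}$ computation of \autoref{section3}, the $D_4^{(1)}$ surfaces carry five root variables and their point configurations involve cascades of infinitely-near base points, so determining the Ansatz and verifying all of the matchings is substantially longer. A secondary point, particular to the differential (as opposed to discrete) setting, is that an isomorphism of surfaces respecting the symplectic structure does not by itself identify the flows; this is exactly why the holomorphicity check that feeds the uniqueness theorem of \cite{M} --- or else the brute-force substitution --- is genuinely required to close the argument.
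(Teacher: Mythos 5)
Your proposal is correct and follows essentially the same route as the paper: the paper also obtains \eqref{transf initial} by constructing the birational map realising the identification of Lemma \ref{lem:change-basis-pre} exactly as in the Laguerre case of \autoref{section3} (finding $ct=1$ in the process), takes the parameter correspondence \eqref{P6paramsinit} from the period-mapping computation of Lemma \ref{rootvarsinit}, and verifies the transformed system \eqref{fginitial} by direct calculation, with the Hamiltonian structure and the uniqueness results on Okamoto's space invoked afterwards (in the subsection on Hamiltonian structure) as the conceptual justification. No gaps.
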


\begin{remark}\label{remark HJ}
In \cite{HFC19}, a second order second degree differential equation for the function $x=x(c)$  was obtained from (\ref{syst P6}). By taking  a  function $h=h(c)$ defined by  
\begin{equation}\label{tr1}
x=\frac{c(c-1)h'-(n+\beta)h^2+(n-\gamma+(n+\alpha+\beta)c)h-(n+\alpha-\gamma)c}{2(c-1)h},
\end{equation} 
it was shown that this differential equations can be transformed to the sixth Painlev\'e equation  for the function $h$ 
 with parameters 
$$A=\frac{1}{2}(n+\beta)^2,\,\,B=-\frac{1}{2}(n+\alpha-\gamma)^2,\,\,C=\frac{1}{2}(\beta-\gamma)^2,\,\,D=-\frac{1}{2}(\alpha-2)\alpha.$$
It is not difficult to check that by taking further $$h(c)=\frac{c f(1/c)-1}{f(1/c)-1}$$ and $t=1/c$ the function $f(t)$ satisfies the sixth Painlev\'e equation with parameters (\ref{par P6}). The approach in \cite{HFC19} was computational: by taking a suitable Ansatz for the form of the transformation from $x$ to $h$ (as in a general Riccati equation with unknown coefficients divided by a function linear in $h$), one could calculate unknown coefficients using any computer algebra package.  Remark~\ref{remark Ric} explains why such an Ansatz worked.
One could also have taken a different Ansatz: both numerator and denominator are as  in  a general Riccati equation with unknown coefficients, but since initial conditions for $n=0$ should satisfy a Riccati equation, then one can assume that, for instance, the denominator is linear in $h$. This reduces the number of unknown  coefficients and simplies the calculations.
\end{remark}

\subsection{Hamiltonian structure and explanation of equivalence}

The isomorphism between surfaces $\X_{\mathbf{b}}$ and $\tilde{\X}_{\mathbf{a}}$ again provides an isomorphism between the bundles forming Okamoto's space
\begin{equation}
\varphi : E \longrightarrow \tilde{E}.
\end{equation}
Restricting to the fibres $E_t = \X_{\mathbf{b}} \backslash D_{\operatorname{red}}$, $E_c = \tilde{\X}_{\mathbf{a}} \backslash \tilde{D}_{\operatorname{red}}$, this transformation can be verified by direct calculation to be symplectic with respect to the normalised 2-forms $\omega$, $\tilde{\omega}$ in charts:
\begin{equation} \label{deltaP6}
 \varphi_t^*\left(  \frac{\delta g \wedge \delta f}{f} \right) = \frac{\delta y \wedge \delta x}{s(x,y)},
\end{equation}
where 
\begin{equation} \label{s}
s(x,y) = x^2 - (n + \alpha + \beta)x - y + \alpha \beta,
\end{equation}
and again $\delta$ is the exterior derivative on the fibre, so $t$ and $c$ are treated as constants in the calculation.
Since the symplectic form $\omega$ on $\X_{\mathbf{b}}$ is $c$-independent and $\tilde{\omega}$ on $\tilde{\X}_{\mathbf{a}}$ is $t$-independent, Lemma \ref{symplecticlemma} guarantees the existence of a Hamiltonian structure for the differential system \eqref{syst P6}. 

\begin{theorem} \label{covhypergeometric}
System (\ref{syst P6}) can be written as the following non-autonomous Hamiltonian system with respect to the symplectic form $\omega=(dx\wedge dy)/s$, where $s$ is given by \eqref{s}:
\begin{equation}\label{syst K}
\frac{x'}{s}=\frac{\partial K}{\partial y}, \quad \frac{y'}{s}=-\frac{\partial K}{\partial x}, 
\end{equation}
where 
\begin{equation*}
K=\frac{1}{c-1}\left(\frac{(x-1)(x-\alpha)(x-\beta)(x-\gamma)}{c(\alpha\beta-(n+\alpha+\beta)x+x^2-y)}-\frac{x(x+n-1-\gamma)}{c} -y\right).
\end{equation*}
Moreover, we have the following equality of 2-forms under the identification $\varphi$, as well as $(f,g) = (q, qp)$:
$$dp \wedge dq - d H \wedge dt = \frac{dg \wedge df}{f} - d H_1\wedge dt  =  \frac{dy \wedge dx}{s}  - dK\wedge dc$$
with $t=1/c$ and parameters related according to \eqref{P6paramsinit}.
\end{theorem}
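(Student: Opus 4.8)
The statement has two parts — the explicit Hamiltonian form \eqref{syst K} and the chain of 2-form equalities — and the plan is to obtain the 2-form identities from the framework already in place and then pin down the explicit $K$ by a direct computation. I would begin with the leftmost equality $dp\wedge dq - dH\wedge dt = \frac{dg\wedge df}{f} - dH_1\wedge dt$, which is the classical canonical-transformation identity relating the two standard Hamiltonian forms of $\pain{VI}$ on the $D_4^{(1)}$ surface. With $(f,g)=(q,qp)$ one has $df=dq$ and $dg=p\,dq+q\,dp$, so $\frac{dg\wedge df}{f}=\frac{(p\,dq+q\,dp)\wedge dq}{q}=dp\wedge dq$ identically on the total space; and $H_1$ is, by construction of the standard model, the Hamiltonian $H$ rewritten in the coordinates $(f,g)$, so $dH_1=dH$. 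Hence the first equality holds on the nose.

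For the second equality I would invoke the fibrewise symplectic identity \eqref{deltaP6}, which says that $\varphi$ restricted to fibres is symplectic with respect to $\omega=\frac{dx\wedge dy}{s}$ on $E$ (with $s$ as in \eqref{s}) and $\frac{dg\wedge df}{f}$ on $\tilde E$. Since the surface isomorphism $\varphi$ identifies the inaccessible divisors (established in the preceding subsections), it descends to an isomorphism $\varphi:E\to\tilde E$ of the bundles; and since $\omega$ is $c$-independent while $\frac{dg\wedge df}{f}$ is $t$-independent, pulling back a symplectic atlas for $\tilde E$ and applying Lemma \ref{symplecticlemma} — together with the independent-variable change $t=1/c$, which contributes the Jacobian factor $dt=-c^{-2}\,dc$ — produces a rational function $K(x,y,c)$, unique modulo functions of $c$, with $\varphi^*\!\big(\tfrac{dg\wedge df}{f}-dH_1\wedge dt\big)=\tfrac{dy\wedge dx}{s}-dK\wedge dc$. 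The accompanying relation $t=1/c$ and parameter identification \eqref{P6paramsinit} are inherited from the root-variable computation already carried out in Lemma \ref{rootvarsinit} and Theorem \ref{theorem P6 initial}.

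It then remains to exhibit the explicit $K$ and verify \eqref{syst K}, which I would do by direct calculation. The key algebraic observations are that the common denominator in \eqref{syst P6} factors as $Q=c(c-1)s$, and that the numerator $P_1$ equals $(x-1)(x-\alpha)(x-\beta)(x-\gamma)-c\,s^2$. Using $\partial s/\partial y=-1$ one then gets $\partial K/\partial y=\big((x-1)(x-\alpha)(x-\beta)(x-\gamma)-cs^2\big)/\big(c(c-1)s^2\big)=P_1/(Qs)=x'/s$, and an analogous (longer) polynomial identity expressing $P_2$ in terms of the quartic $(x-1)(x-\alpha)(x-\beta)(x-\gamma)$, $s$, and their $x$-derivatives yields $-\partial K/\partial x=P_2/(Qs)=y'/s$. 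Comparison with the $K$ furnished by the previous paragraph fixes the undetermined function of $c$; equivalently, one may simply transport the $\pain{VI}$ Hamiltonian $H_1$ through the change of variables \eqref{transf initial} and the rescaling $t=1/c$ and simplify to the stated form.

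The main obstacle is the middle step: correctly accounting for the interaction between the explicit $c$-dependence of $\varphi_t$ in \eqref{transf initial} and the change of independent variable $t=1/c$ when passing from the fibrewise identity \eqref{deltaP6} to the 2-form identity on the total spaces. The correction terms arising here are precisely what distinguishes $K$ from the naive pullback $H_1\circ\varphi$, so the genuine content is checking that the explicit $K$ above — and not merely $H_1\circ\varphi$ — is the correct Hamiltonian. Once \eqref{deltaP6} and Lemma \ref{symplecticlemma} are in hand, the remaining polynomial identities ($Q=c(c-1)s$, $P_1=(x-1)(x-\alpha)(x-\beta)(x-\gamma)-cs^2$, and its $P_2$-counterpart) are routine and best verified with computer algebra.
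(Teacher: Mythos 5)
Your proposal is correct and follows essentially the same route as the paper: the conceptual part rests on the fibrewise symplectic identity \eqref{deltaP6} together with Lemma \ref{symplecticlemma} (and the trivial computation that $(f,g)=(q,qp)$ gives $dg\wedge df/f = dp\wedge dq$ with $H_1=H$), and the explicit $K$ is settled by direct calculation using $Q=c(c-1)s$. The only cosmetic difference is direction: the paper constructs $K$ by integrating $x'/s$ in $y$ and fixing the integration "constant" $F(x,c)$ from the second equation of \eqref{syst P6}, whereas you verify the stated $K$ by differentiating, via the identity $P_1=(x-1)(x-\alpha)(x-\beta)(x-\gamma)-cs^2$ and its $P_2$ analogue — both are the same routine computation.
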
 

\begin{proof}
Along the lines of Lemma \ref{symplecticlemma}, the condition \eqref{deltaP6} dictates the symplectic form with respect to which our system for $x,y$ should be Hamiltonian, so the function $K(x,y,c)$ will be obtained by solving a system of partial differential equations.
Let us first find a function $K$ satisfying $ \partial K/\partial y=x'/s$, with $x'$ given by the system \eqref{syst P6}. 
Take the  right hand side of the first equation \eqref{syst P6} and divide it by $s$.  Then  integrate the obtained rational expression with respect to $y$. The result is another rational expression in terms of $x,\,y$. Instead of a constant of integration we take the unknown function $F(x)=F(x,c)$. Next we substitute this expression with added $F(x)$ into  the equation $\partial K/\partial x=-  y'/s $, where $y'$ is replaced by the right hand side of the  second equation in  (\ref{syst P6}). This gives a simple differential equation  for $F$, namely, 
$F'(x)(c-1)=1-n+\gamma-2x$, where we regard $c$ as a constant as well and derivative is with respect to $x$. This equation can be solved explicitly as 
$$F(x)=-\frac{x(n-1-\gamma+x)}{c(c-1)}+C,$$ where $C$ is an arbitrary function of only $c$ which can be set to zero. Hence, 
$$
K=\frac{1}{c-1}\left(\frac{(x-1)(x-\alpha)(x-\beta)(x-\gamma)}{c(\alpha\beta-(n+\alpha+\beta)x+x^2-y)}-\frac{x(x+n-1-\gamma)}{c} -y\right).
$$
We could also proceed in the reverse order: first integrate the expression  $-  y'/s $ with respect to $x$ and then find a function $G(y)=G(y,c)$ from $ \partial K/\partial y=x'/s$. This gives an expression for $K$  which differs from the previous only by terms depending on parameters and $c$, but this does not influence \eqref{syst K}.  
 \end{proof}
If we rewrite $K$ in terms of the variables $x$ and $s$,  we see that the resulting expression is holomorphic on the complement of the anticanonical divisor, and extends meromorphically to the part of the surface $\X_{\mathbf{b}}$ visible in the $(x,s)$-chart.
Similarly to in \autoref{section3} we can pull back the symplectic atlas from \cite{T1} to the bundle $E$, and see that the global Hamiltonian structure of the differential system \eqref{syst P6} is given by a collection of rational Hamiltonians holomorphic on $E$, so the uniqueness result of \cite{T1} implies that it should be transformed under our identification to the standard  Hamiltonian form of the sixth Painlev\'e equation, as we have observed.

\begin{remark}\label{remark Ric}
We remark that system (\ref{syst P6}) has special solutions for $n=0$, namely, $y=0$ with $x=x(c)$ satisfying the Riccati equation
\begin{equation}\label{Ric x}
c(c-1)x'+(c-1)x^2+(1-c(\alpha+\beta)+\gamma)x+\alpha\beta c-\gamma=0. 
\end{equation}
The initial condition  $x_0$ for the discrete system  (\ref{(2.3)}) and (\ref{(2.4)}) satisfies this equation.
When we take system \eqref{fgP6} with parameters \eqref{P6paramsinit}, we also have special Riccati solutions for the function $f=f(t)$ with $n=0$:
\begin{equation}\label{Ric f}
t(t-1)f'+(\alpha-1)f^2-(\alpha-\beta-1+t\gamma)f+(\gamma-\beta)=0. 
\end{equation}
The relation between (\ref{Ric x}) and (\ref{Ric f}) is very simple; it is just a fractional-linear transformation, namely, $$f=\frac{x-\gamma}{c(x-\alpha)}$$  with  $t=1/c$. 
We also see the appearance of Riccati solutions for $n=0$ in terms of the surfaces: the $n=0$ specialisation causes the points $q_2$, $q_3$ to move onto the line $\{ y=0 \}$ so the class $\h_y-\F_2-\F_3$ becomes effective, with the Riccati solution given by the flow along this curve. 
This is the class in $\operatorname{Pic}(\mathcal{X}_{\mathbf{b}})$ of a \emph{nodal curve} : a $(-2)$-curve which is not a component of the anti-canonical divisor \cite{Sak:2001:RSAWARSGPE}.
\end{remark}


\section{Conclusions} 
\label{sec:conclusions}
The purpose of this paper is to provide a novel method for solving the identification problem for differential Painlev\'e equations, i.e., a systematic procedure for determining whether a second-order non-linear differential system  
can be transformed to a differental Painlev\'e equation, and if so, how to reduce it to the standard form. 
The method is essentially justified by the uniqueness results related to global Hamiltonian systems on Okamoto's spaces, so an appropriate identification obtained through the geometric approach is guaranteed to match the differential systems.

We considered in detail two examples from the theory of 
continuous and discrete orthogonal polynomials, where we showed that the recurrence coefficients for these polynomials are expressed in terms of  solutions of the standard fourth and sixth  Painlev\'e equations. 
In particular the transformations both show features which would likely be difficult to detect through a brute force approach, for example a nontrivial relation of independent variables in \autoref{changeofvarslaguerre}, and quite complicated forms of the birational transformations in \autoref{theorem P6 final} and \autoref{theorem P6 initial}.
While we have presented examples from the theory of orthogonal polynomials, it is worth noting that this method is applicable to a wide range of other applied problems where differential Painlev\'e equations are either known or suspected to appear.

\section*{Acknowledgements} 
\label{sec:acknowledgements} AD acknowledges the support of 
 the MIMUW grant  to visit Warsaw in January 2020. GF acknowledges the support of the 
National Science Center (Poland) via grant OPUS 2017/25/B/BST1/00931. 
Part of this work was carried out while AS was supported by a University College London Graduate Research Scholarship and Overseas Research Scholarship. 
AS also acknowledges the support of the MIMUW grant which enabled him to  visit Warsaw in December 2019 and February 2020, which was essential for the success of the project.
AS was supported by a London Mathematical Society Early Career Fellowship during the preparation of this manuscript and gratefully acknowledges the support of the London Mathematical Society.
The authors also thank an anonymous reviewer for valuable comments and suggestions, and for pointing out the alternative method for detecting the connection between equation \eqref{differential} to the fourth Painlev\'e equation as outlined in \autoref{Laguerresubsubsection}.




\begin{appendices}

\section{Surfaces from the hypergeometric weight} \label{AppendixA}


Here we present the surfaces constructed in \cite{DFS19}, which provide the space of initial conditions for the differential system \eqref{syst P6} from the hypergeometric weight.
As usual we begin with the affine $(x,y)$-chart and compactifying to $\p^1 \times \p^1$ via $X=1/x$, $Y=1/y$. 
In the $(x,y)$-chart we blow up four points $q_1,\,q_2,\,q_3$ and $q_4$, which lie on the curve $\Gamma$ on $\p^1 \times \p^1$ defined by the equation
\begin{equation}\label{curve}
\Gamma ~:~ s(x,\,y)=x^2-(n+\alpha+\beta)x-y+\alpha\beta=0,
\end{equation}
which can be seen to be the vanishing locus of the denominators of both $x'(c)$ and $y'(c)$ in the system \eqref{syst P6}.
In the $(X,Y)$-chart we find a cascade of four points over $q_5 : (X,Y) = (0,0)$. 
We give the configuration of these points in \autoref{fig:surface-hw}, with their locations in coordinates in \autoref{pointlocationshypergeometric}. 
The inaccessible divisors are 
\begin{equation} \label{DiHypergeometric}
\begin{aligned}
D_0  &= F_5-F_6, 				&\quad  	&D_3 = F_7 - F_8, \\
D_1 &= 2 H_x +H_y - F_{123456}, 	&\quad 	&D_4 = H_y - F_{56}, \\
D_2 &= F_6 - F_7, 				&\quad 	&
\end{aligned}
\end{equation}
where we have used notation $F_{ij} = F_i + F_j$ etc. for brevity, 
and in particular the divisor $D_1$ is the proper transform of the curve $\Gamma$ defined by \eqref{curve}.
The classes of these inaccessible divisors $\delta_i = [D_i]$ in $\Pic(\X_{c})$ provide the surface root basis in \autoref{surfacerootshypergeom}. 
These form the irreducible components of the unique anticanonical divisor of the surface $\X_c$, with
\begin{equation}
\begin{aligned}
-\mathcal{K}_{\X_c} &= \delta_0 + \delta_1 + 2 \delta_2 + \delta_3 + \delta_4 \\
&= 2 \h_x + 2 \h_y - \F_1 - \F_2 - \F_3 - \F_4 - \F_5 - \F_6 - \F_7 - \F_8.
\end{aligned}
\end{equation}
We take the rational symplectic form on $\X_c$ given in charts by 
\begin{equation} \label{omegahypergeom}
\begin{aligned}
\omega &= k \frac{dy \wedge dx}{x^2 - (n + \alpha + \beta)x - y + \alpha \beta}  \\
&= k \frac{dy \wedge dX}{ X^2 y - \alpha \beta X^2 +(n + \alpha + \beta) X - 1}\\
&= k \frac{dx \wedge dY}{Y \left( x^2 Y - (n+\alpha + \beta)x Y + \alpha \beta Y - 1 \right)}   \\
&= k \frac{dX \wedge dY}{Y(X^2 - \alpha \beta X^2 Y+(n+\alpha + \beta)X Y - Y)}, \\
\end{aligned}
\end{equation}
where $k$ is normalised appropriately, for example to $k=1$ for the change of variables from the initial identification, provided in Lemma \ref{rootvarsinit}.

\begin{figure}[H]
	\begin{tikzpicture}[scale=1,>=stealth,basept/.style={circle, draw=red!100, fill=red!100, thick, inner sep=0pt,minimum size=1.2mm}]
			\begin{scope}[xshift = -4cm]
			\draw [black, line width = 1pt]  	(4,0) 	-- (-0.5,0) 		node [left] {$y=0$} node[pos=0, right] {};
			\draw [black, line width = 1pt] 	(4,2.5) 	-- (-0.5,2.5)	node [left] {$y=\infty$} node[pos=0, right] {};
			\draw [black, line width = 1pt] 	(0,3) -- (0,-0.5)			node [below] {$x=0$} node[pos=0, above, xshift=-7pt] {};
			\draw [black, line width = 1pt] 	(3,3) -- (3,-0.5)			node [below] {$x=\infty$} node[pos=0, above, xshift=7pt] {};

			\draw[line width = 1pt] (3,1.5) circle [x radius=1.5cm, y radius = 1cm];
			\node at (3,0.5) 	[circle, draw=white!100, fill=white!100, minimum size=1mm] {};
			\draw [black, line width = 1pt] 	(3,3) -- (3,-0.5)			node [below] {};

			\node (p1) at (1.9,2.18) [basept,label={[xshift=10pt, yshift = -15 pt] $q_{1}$}] {};
			\node (p2) at (1.6,1.14) [basept,label={[xshift=-10pt, yshift = -10 pt] $q_{2}$}] {};
			\node (p3) at (2.1,0.7) [basept,label={[xshift=0pt, yshift = -20 pt] $q_{3}$}] {};
			\node (p4) at (3.4,0.54) [basept,label={[xshift=10pt, yshift = -15 pt] $q_{4}$}] {};
			\node (p5) at (3,2.5) 	[basept,label={[xshift=10pt, yshift = 0 pt] $q_{5}$}] {};
			\node (p6) at (2.1,3.1)	[basept,label={[above] 		$q_{6}$}] {};
			\node (p7) at (1.4,3.1) [basept,label={[above] 		$q_{7}$}] {};
			\node (p8) at (0.7,3.1) [basept,label={[above] 		$q_{8}$}] {};

			\node at (4.7,0.5) [label = {\tiny}] {};
			\node at (5.3,1.6) [label = {\tiny }] {};

			\draw [line width = 0.8pt, ->]  (p8) edge (p7) (p7) edge (p6) (p6) edge (p5);
		\end{scope}

		\draw [->] (2.5,1.5)--(1.5,1.5) node[pos=0.5, below] {\small$\operatorname{Bl}_{q_{1}\cdots q_{8}}$};

		\begin{scope}[xshift = 4cm, yshift = 0cm]
			\draw [black, line width = 1pt ] (3.5,0) -- (-0.5,0) node [left] {$$};
			\draw [blue, line width = 1pt ] (1.2,3.7) .. controls (0.8,3) and (0.3,2.5) .. (-0.5,2.5) node [left] {\small$D_5$};
			\draw [black, line width = 1pt ] (0,3) -- (0,-0.5) node [below] {$$};
			\draw [red, line width = 1pt ] (4.5,1) .. controls (4,0.8) and (3,0.3) .. (3,-0.5) node [below] {\small$H_{x} - F_{5}$};

			\draw[blue, line width = 1pt] (3.5,1.9) circle [x radius=1.7cm, y radius = 1cm, rotate = 25] ;
			\node at (4,1.05) 	[circle, draw=white!100, fill=white!100, minimum size=1mm] {};

			\node at (4,3.4) [blue] {\small $D_1$};

			\draw [red, line width = 1 pt] (1.6,1.9) -- (2.5,2.1) node [pos = 0, left] {\small$F_{1}$};
			\draw [red, line width = 1 pt] (1.6,1.1) -- (2.5,1.6) node [pos = 0, left] {\small $F_{2}$};
			\draw [red, line width = 1 pt] (1.95,0.5) -- (2.65,1.2) node [pos = 0, left] {\small $F_{3}$};
			\draw [red, line width = 1 pt] (4.5,2.5) -- (5.4,2.3) node [right] {\small $F_{4}$};

			\draw [blue, line width = 1 pt] (0.4,3.4) -- (4.2,1.8) node [pos = 0, left] {\small $D_2$};
			\draw [blue, line width = 1 pt] (3.5,2.5) .. controls (3.4,1.8) and (3.8,1.2) .. (4.5,0.5) node [below] {\small$D_0$};
			\draw [blue, line width = 1 pt] (0.8,2.3)--(1.8,4) node [pos = 1, right] {\small$D_3$};
			\draw [red, line width = 1 pt] (1.1,4) -- (2,3.6) node [pos = 0, left] {\small$F_{8}$};


		\end{scope}


	\end{tikzpicture}
	\caption{Point configuration and surface for the hypergeometric weight}
	\label{fig:surface-hw}
\end{figure}

\begin{figure}[ht]
\begin{gather*}
\begin{aligned}
& q_1 : (x, y) = (1,(\alpha-1)(\beta-1) - n), \quad q_2 : (x,y) = (\alpha, - n \alpha),  \\
\\
& q_3 : (x, y) = (\beta, - n \beta), \quad q_4 : (x, y) = (\gamma, \alpha \beta - (n+\alpha + \beta)\gamma + \gamma^2 ), 
\end{aligned}
\\
\\
\begin{aligned}
& q_5 : (x,y) = (\infty, \infty) ~ \leftarrow  &~   &q_6 : (U_5, V_5) = (x/y, 1/x) = (0,0) \\
&  & &  \uparrow  & \\
&  & & q_7 : (u_6,v_6) = \left( U_5/V_5 , V_5 \right) = (c/(c-1), 0)  \\
&  & &  \uparrow  & \\
&  & & q_8 : (u_7,v_7) =\left( \frac{ (c-1) u_6 - c}{(c-1)v_6} , v_6 \right) \\
&  & & \quad \quad ~~~~~~~~~ = \left( \frac{c((c+1)n+(\alpha+\beta)c-\gamma)}{(c-1)^2},  0 \right) 
\end{aligned}
\end{gather*}

\caption{Point locations for the hypergeometric weight}
 \label{pointlocationshypergeometric}
 \end{figure}
 
 \begin{figure}[ht]
 \begin{equation*}  
	\raisebox{-32.1pt}{\begin{tikzpicture}[
			elt/.style={circle,draw=black!100,thick, inner sep=0pt,minimum size=2mm}, scale=1]
		\path 	(-1,1) 	node 	(d0) [elt, label={[xshift=-10pt, yshift = -10 pt] $\delta_{0}$} ] {}
		        (-1,-1) node 	(d1) [elt, label={[xshift=-10pt, yshift = -10 pt] $\delta_{1}$} ] {}
		        ( 0,0) 	node  	(d2) [elt, label={[xshift=13pt, yshift = -12 pt] $\delta_{2}$} ] {}
		        ( 1,1) 	node  	(d3) [elt, label={[xshift=10pt, yshift = -10 pt] $\delta_{3}$} ] {}
		        ( 1,-1) node 	(d4) [elt, label={[xshift=10pt, yshift = -10 pt] $\delta_{4}$} ] {};
		\draw [black,line width=1pt ] (d0) -- (d2) -- (d1)  (d3) -- (d2) -- (d4);
	\end{tikzpicture}} \qquad
			\begin{alignedat}{2}
			\delta_{0} &= \mathcal{F}_{5} - \mathcal{F}_{6}, &\qquad  \delta_{3} &= \mathcal{F}_{7} - \mathcal{F}_{8},\\
			\delta_{1} &= 2\mathcal{H}_{x} + \mathcal{H}_{y} - \mathcal{F}_{123456}, &\qquad  \delta_{4} &= \mathcal{H}_{y} - \mathcal{F}_{56}.\\
			\delta_{2} &= \mathcal{F}_{6} - \mathcal{F}_{7},
			\end{alignedat}
\end{equation*}
\caption{The Surface Root Basis for the hypergeometric weight}
 \label{surfacerootshypergeom}
 \end{figure}


\section{Standard model of $D_4^{(1)}$-surfaces and $\pain{VI}$} \label{AppendixB}

We also recall the standard model of Sakai surfaces of surface type $D_4^{(1)}$, as well as the standard form of $\pain{VI}$ as a first-order Hamiltonian system.
The usual Hamiltonian form of $\pain{VI}$ \cite{KajNouYam:2017:GAOPE} is given by
\begin{equation} \label{hamP6}
\frac{dq}{dt} = \frac{\partial H}{\partial p}, \qquad  \frac{dp}{dt} = -\frac{\partial H}{\partial q} ,
\end{equation}
where the Hamiltonian is given by 
\begin{equation*}
H(q,p,t) = \frac{q(q-1)(q-t)}{t(t-1)} \left( p^2 - \left( \frac{a_0 -1}{q-t} + \frac{a_3}{q-1} + \frac{a_4}{q} \right) p \right) + \frac{ (q-t)a_2 (a_1 + a_2)}{t(t-1)}.
\end{equation*}
Here $a_0,a_1,a_2,a_3, a_4$ are parameters subject to the normalisation 
\begin{equation} \label{paramnormP6}
a_0 + a_1 + 2 a_2 + a_3+ a_4  = 1.
\end{equation}
Eliminating $p$ from the system \eqref{hamP6}, we obtain the usual scalar form of $\pain{VI}$ for $q(t)$:
\begin{equation} \label{scalarP6}
\begin{aligned}
\frac{d^2 q}{d t^2} &= \frac{1}{2} \left( \frac{1}{q} + \frac{1}{q-1} + \frac{1}{q-t} \right) \left( \frac{d q}{dt} \right)^2 - \left( \frac{1}{t} + \frac{1}{t-1} + \frac{1}{q-t} \right) \left( \frac{dq}{dt} \right) \\
&\quad \quad \quad \quad \quad \quad + \frac{q (q-1)(q-t)}{t^2(t-1)^2} \left( A + B \frac{t}{q^2} + C \frac{t-1}{(y-1)^2} + D \frac{t(t-1)}{(y-t)^2} \right),
\end{aligned}
\end{equation}
where the parameters are given by
\begin{equation}
A=\frac{a_1^2}{2},\;\;B=-\frac{a_4^2}{2},\;\;C=\frac{a_3^2}{2},\;\;D=\frac{1 - a_0^2}{2}.
\end{equation}
The standard form of $\pain{VI}$ (as a first-order system) we will be working with is obtained from \eqref{hamP6} by letting $(f,g) = (q, q p)$, after which we have 
\begin{equation} \label{fgP6}
\frac{f'}{f} = \frac{\partial H_1}{\partial g}, \quad \frac{g'}{f} = - \frac{\partial H_1}{\partial f}, 
\end{equation}
where the Hamiltonian is given by  
\begin{equation*}\label{Ham fg}
H_1(f,g,t)=\frac{a_2(a_1+a_2)(f-t)}{t(t-1)}+\frac{f(f-1)(f-t)}{t(t-1)}\left\{ \frac{g^2}{f}-\left( \frac{a_4}{f}+\frac{a_3}{f-1}+\frac{a_0-1}{f-t}\right) \right\}.
\end{equation*}

\subsection{Point configuration and anticanonical divisor}

Beginning with the affine $(f,g)$-chart and compactifying to $\p^1 \times \p^1$ as usual via $F= 1/f$ and $G=1/g$, the configuration of points to be blown up to construct the surface, which we denote $\tilde{\X}_t$, are given in \autoref{surfaceP6}, with their locations in coordinates in \autoref{pointlocationsP6}.


\begin{figure}[ht]
	\begin{tikzpicture}[>=stealth,basept/.style={circle, draw=red!100, fill=red!100, thick, inner sep=0pt,minimum size=1.2mm}]
		\begin{scope}[xshift = -1cm]
			\draw [black, line width = 1pt] 	(4.1,2.5) 	-- (-0.5,2.5)	node [left]  {$g=\infty$} node[pos=0, right] {$$};
			\draw [black, line width = 1pt] 	(0,3) -- (0,-1)			node [below] {$f=0$}  ;
			\draw [black, line width = 1pt] 	(3.6,3) -- (3.6,-1)		node [below]  {$f=\infty$} ;
			\draw [black, line width = 1pt] 	(4.1,-.5) 	-- (-0.5,-0.5)	node [left]  {$g=0$} node[pos=0, right] {$$};

			\node (p1) at (3.6,0.5) [basept,label={[xshift=10pt, yshift = -10 pt] $p_{1}$}] {};
			\node (p2) at (3.6,1.5) [basept,label={[xshift=10pt, yshift = -10 pt] $p_{2}$}] {};
			\node (p3) at (2.4,2.5) [basept,label={[above] $p_{3}$}] {};
			\node (p4) at (2.4,1.5) [basept,label={[xshift=0pt, yshift = -18 pt] $p_{4}$}] {};
			\node (p5) at (0,0.5) [basept,label={[xshift=-10pt, yshift = -10 pt] $p_{5}$}] {};
			\node (p6) at (0,1.5) [basept,label={[xshift=-10pt, yshift = -10 pt] $p_{6}$}] {};
			\node (p7) at (1.2,2.5) [basept,label={[above] $p_{7}$}] {};
			\node (p8) at (1.2,1.5) [basept,label={[xshift=0pt, yshift = -18 pt] $p_{8}$}] {};
			\draw [line width = 0.8pt, ->] (p4) -- (p3);
			\draw [line width = 0.8pt, ->] (p8) -- (p7);
		\end{scope}
	
		\draw [->] (6.0,1.5)--(4.0,1.5) node[pos=0.5, below] {$\text{Bl}_{p_1\dots p_8}$};
	
		\begin{scope}[xshift = 8cm]
			\draw [blue, line width = 1pt] 	(4.1,2.5) 	-- (-0.5,2.5)	node [left] {}node[pos=0, right] {$\tilde{D}_2$};
			\draw [blue, line width = 1pt] 	(0,3) -- (0,-.75)			node [below] {$\tilde{D}_4$} node[pos=0, above, xshift=-7pt] {} ;
			\draw [blue, line width = 1pt] 	(3.6,3) -- (3.6,-0.75)			node [below] {$\tilde{D}_1$}  node[pos=0, above, xshift=7pt] {};
		
			\draw [red,line width = 1pt] (3.1,0.7) -- (4.1,0.3) node [right] {$E_1$};
			\draw [red,line width = 1pt] (3.1,1.7) -- (4.1,1.3) node [right] {$E_2$};
			\draw [red,line width = 1pt] (-0.5,0.3) -- (0.5,0.7) node [pos=0,left] {$E_5$} ;
			\draw [red,line width = 1pt] (-0.5,1.3) -- (0.5,1.7) node [pos=0,left] {$E_6$};
			\draw [blue,line width = 1pt] (2.4,0.7) -- (2.4,2.9) node [pos=0,below] {$\tilde{D}_0$} ;
			\draw [blue,line width = 1pt] (1.2,0.7) -- (1.2,2.9) node [pos=0,below]  {$\tilde{D}_3$};
			\draw [red,line width = 1pt] (1.9,1.5) -- (2.9,1.1) node [right] {$E_4$};
			\draw [red,line width = 1pt] (0.7,1.1) -- (1.7,1.5) node [pos=0,left] {$E_8$};

		\end{scope}
	\end{tikzpicture}
	\caption{Point configuration for the standard model of $D_4^{(1)}$-surfaces}
	\label{surfaceP6}
\end{figure}

\begin{figure}[ht]
\begin{equation*}
\begin{aligned}
&  p_1 : (f,g) = (\infty, -a_2),  &&p_2 : (f,g) = ( \infty, -a_1 -a_2), \\
\\
& p_3 : (f,g) = (t , \infty) \qquad \leftarrow &&p_4 : (u_3, v_3) = ((f-t)g, 1/g) = (a_0 t,0),\\
\\
&  p_5 : (f,g) = (0,0), &&p_6 : (f,g) = (0, a_4), \\
\\
& q_7 : (f,g) = (1,0) \qquad ~ \leftarrow  &&p_8 : (u_7, v_7) = \left( (f-1)g, 1/g \right) = (a_3,0).
\end{aligned}
\end{equation*}
\caption{Point locations for the standard model of $D_4$-surfaces}
 \label{pointlocationsP6}
 \end{figure}
The inaccessible divisors are 
\begin{equation} \label{DiP6}
\begin{aligned}
\tilde{D}_0  &= E_3 - E_4, 		&\quad  	&\tilde{D}_3 = E_7 - E_8, \\
\tilde{D}_1 &= H_f - E_1 - E_2, 		&\quad 	&\tilde{D}_4 = H_f - E_5 - E_6, \\
\tilde{D}_2 &= H_g - E_3 - E_7, 	&\quad 	&
\end{aligned}
\end{equation}
where in particular $\tilde{D}_1$ and $\tilde{D}_4$ are the proper transforms of the lines $\{F=0\}$ and $\{f=0\}$ respectively, and similarly $\tilde{D}_2$ is the proper transform of the line $\{G=0\}$. 
The classes $\tilde{\delta}_i = [\tilde{D}_i]$ in $\Pic(\tilde{\X}_{t})$ provide the surface root basis in \autoref{surfacerootsP6}. 
The inaccessible divisors in \eqref{DiP6} are the irreducible components of the unique anticanonical divisor of the surface $\tilde{\X}_t$, with
\begin{equation}
\begin{aligned}
-\mathcal{K}_{\tilde{\X}_t} &= \tilde{\delta}_0 + \tilde{\delta}_1 + 2 \tilde{\delta}_2 + \tilde{\delta}_3 + \tilde{\delta}_4 \\
&= 2 \h_f + 2 \h_g - \E_1 - \E_2 - \E_3 - \E_4 - \E_5 - \E_6 - \E_7 - \E_8.
\end{aligned}
\end{equation}

\begin{figure}[ht]
\begin{equation*}		
	\raisebox{-32.1pt}{\begin{tikzpicture}[
			elt/.style={circle,draw=black!100,thick, inner sep=0pt,minimum size=2mm},scale=1]
		\path 	(-1,1) 	node 	(d0) [elt, label={[xshift=-10pt, yshift = -10 pt] $\tilde{\delta}_{0}$} ] {}
		        (-1,-1) node 	(d1) [elt, label={[xshift=-10pt, yshift = -10 pt] $\tilde{\delta}_{1}$} ] {}
		        ( 0,0) 	node  	(d2) [elt, label={[xshift=13pt, yshift = -12 pt] $\tilde{\delta}_{2}$} ] {}
		        ( 1,1) 	node  	(d3) [elt, label={[xshift=10pt, yshift = -10 pt] $\tilde{\delta}_{3}$} ] {}
		        ( 1,-1) node 	(d4) [elt, label={[xshift=10pt, yshift = -10 pt] $\tilde{\delta}_{4}$} ] {};
		\draw [black,line width=1pt ] (d0) -- (d2) -- (d1)  (d3) -- (d2) -- (d4); 
	\end{tikzpicture}} \qquad
			\begin{alignedat}{2}			
			\tilde{\delta}_{0} &= \mathcal{E}_{3} - \mathcal{E}_{4}, &\qquad  \tilde{\delta}_{3} &= \mathcal{E}_{7} - \mathcal{E}_{8},\\
			\tilde{\delta}_{1} &= \mathcal{H}_{f} - \mathcal{E}_{1} - \mathcal{E}_{2}, &\qquad  \tilde{\delta}_{4} &= \mathcal{H}_{f} - \mathcal{E}_{5} - \mathcal{E}_{6}.\\
			\tilde{\delta}_{2} &= \mathcal{H}_{g} - \mathcal{E}_{3} - \mathcal{E}_{7},\\[5pt]
			\end{alignedat}
\end{equation*}
\caption{The Surface Root Basis for the standard model of $D_4^{(1)}$-surfaces}
 \label{surfacerootsP6}
 \end{figure}

\subsection{Symmetry root basis and root variables}

We make the same choice of symmetry root basis as in \cite{KajNouYam:2017:GAOPE}, which we present in \autoref{symmetryrootsP6}.

\begin{figure}[H]
\begin{equation*}		
	\raisebox{-32.1pt}{\begin{tikzpicture}[
			elt/.style={circle,draw=black!100,thick, inner sep=0pt,minimum size=2mm},scale=1]
		\path 	(-1,1) 	node 	(d0) [elt, label={[xshift=-10pt, yshift = -10 pt] $\tilde{\alpha}_{0}$} ] {}
		        (-1,-1) node 	(d1) [elt, label={[xshift=-10pt, yshift = -10 pt] $\tilde{\alpha}_{1}$} ] {}
		        ( 0,0) 	node  	(d2) [elt, label={[xshift=13pt, yshift = -12 pt] $\tilde{\alpha}_{2}$} ] {}
		        ( 1,1) 	node  	(d3) [elt, label={[xshift=10pt, yshift = -10 pt] $\tilde{\alpha}_{3}$} ] {}
		        ( 1,-1) node 	(d4) [elt, label={[xshift=10pt, yshift = -10 pt] $\tilde{\alpha}_{4}$} ] {};
		\draw [black,line width=1pt ] (d0) -- (d2) -- (d1)  (d3) -- (d2) -- (d4); 
	\end{tikzpicture}} \qquad
			\begin{alignedat}{2}			
			\tilde{\alpha}_{0} &= \h_f - \mathcal{E}_{3} - \mathcal{E}_{4}, &\qquad  \tilde{\alpha}_{3} &= \h_f - \mathcal{E}_{7} - \mathcal{E}_{8},\\
			\tilde{\alpha}_{1} &= \mathcal{E}_{1} - \mathcal{E}_{2}, &\qquad  \tilde{\alpha}_{4} &= \mathcal{E}_{5} - \mathcal{E}_{6},\\
			\tilde{\alpha}_{2} &= \mathcal{H}_{g} - \mathcal{E}_{1} - \mathcal{E}_{5},\\[5pt]
			\end{alignedat}
\end{equation*}
\caption{The Symmetry Root Basis for the standard model of $D_4^{(1)}$-surfaces}
 \label{symmetryrootsP6}
 \end{figure}
To define the root variables corresponding to this symmetry root basis for the surface $\tilde{\X}_t$, we use the symplectic form whose pole divisor is the anticanonical divisor of $\tilde{\X}_t$, which is defined in charts by
\begin{equation} \label{omegaP6}
\tilde{\omega} = \tilde{k} \frac{ d g \wedge df }{f}=  - \tilde{k} \frac{ d G \wedge df}{G^2 f}  =- \tilde{k} \frac{ d g \wedge dF}{F} = k \frac{d G \wedge d F}{G^2 F},
\end{equation}
where again $\tilde{k}$ is a nonzero constant which will be normalised. 
By standard computations (see \cite{DFS19} for details) we have the following.

\begin{lemma}

\begin{enumerate}[(a)]
\item The values of the period mapping for the symplectic form $\tilde{\omega}$ in \eqref{omegaP6} on the symmetry roots in \autoref{symmetryrootsP6} are given by
\begin{equation}
\tilde{\chi}(\tilde{\alpha}_0) = \tilde{k} a_0, \quad  \tilde{\chi}(\tilde{\alpha}_1) = \tilde{k} a_1, \quad \tilde{\chi}(\tilde{\alpha}_2) = \tilde{k} a_2, \quad \tilde{\chi}(\tilde{\alpha}_3) = \tilde{k} a_3, \quad \tilde{\chi}(\tilde{\alpha}_4) = \tilde{k} a_4.
\end{equation}
\item By normalising the symplectic form such that $\tilde{k}=1$, we have the parameters $a_i$ in the point configuration being the root variables for the surface $\tilde{\X}_t$ with the choice of symmetry root basis in \autoref{symmetryrootsP6}.
This also recovers the parameter normalisation \eqref{paramnormP6}:
\begin{equation}
a_0 + a_1 +2 a_2 +a_3 + a_4= \tilde{\chi}(\tilde{\alpha}_0 + \tilde{\alpha}_1 + 2\tilde{\alpha}_2 + \tilde{\alpha}_3 + \tilde{\alpha}_4) = \tilde{\chi}(-\mathcal{K}_{\tilde{\X}_t}) = 1.
\end{equation}
\end{enumerate}

\end{lemma}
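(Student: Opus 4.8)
The plan is to establish part (a) by direct application of the period-mapping algorithm recalled in \autoref{sec332} to each of the five symmetry roots of \autoref{symmetryrootsP6}, and then to obtain part (b) by linearity of $\tilde{\chi}$. Throughout I would work with the surface $\tilde{\X}_t$ whose point configuration is that of \autoref{pointlocationsP6} and with the symplectic form $\tilde{\omega}$ given in charts by \eqref{omegaP6}.

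For part (a) I would proceed root by root. First, for each $\tilde{\alpha}_i$ I would write it as a difference $\tilde{\alpha}_i = [C_i^1] - [C_i^0]$ of classes of \emph{irreducible} curves: for $\tilde{\alpha}_1 = \E_1 - \E_2$ and $\tilde{\alpha}_4 = \E_5 - \E_6$ these are just the (proper transforms of the) exceptional curves; for $\tilde{\alpha}_0 = \h_f - \E_3 - \E_4$ one takes $C_0^1$ to be the proper transform of the line $\{f=t\}$, which has class $\h_f - \E_3$ since this line passes through $p_3$ but not through $p_4$, together with $C_0^0 = E_4$, and symmetrically $\tilde{\alpha}_3 = \h_f - \E_7 - \E_8$ uses the line $\{f=1\}$ through $p_7$ and the curve $E_8$; for $\tilde{\alpha}_2 = \h_g - \E_1 - \E_5$ one takes $C_2^1$ the proper transform of $\{g = -a_2\}$ (class $\h_g - \E_1$) and $C_2^0 = E_5$, noting that $p_1$ and $p_5$ lie on different lines of constant $g$ so no single such line represents $\tilde{\alpha}_2$. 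Next, using the intersection form \eqref{eq:int-form} together with \eqref{DiP6}, I would identify for each $i$ the unique anticanonical component $\tilde{D}_{k(i)}$ with $[\tilde{D}_{k(i)}]\cdot[C_i^0] = [\tilde{D}_{k(i)}]\cdot[C_i^1] = 1$; this gives $\tilde{D}_1$ for $\tilde{\alpha}_1$, $\tilde{D}_4$ for $\tilde{\alpha}_2$ and $\tilde{\alpha}_4$, $\tilde{D}_0$ for $\tilde{\alpha}_0$, and $\tilde{D}_3$ for $\tilde{\alpha}_3$. The residue $\operatorname{res}_{\tilde{D}_{k(i)}} \tilde{\omega}$ is then computed in the appropriate chart from \eqref{omegaP6} — directly in the $(F,g)$- or $(f,g)$-chart when the component is the proper transform of a line on $\p^1 \times \p^1$ (the cases $\tilde{D}_1$, $\tilde{D}_4$), and in the blowup charts $(u_3,v_3)$, $(u_7,v_7)$ when the component is one of the $(-2)$-curves $\E_3 - \E_4$, $\E_7 - \E_8$ (the cases $\tilde{D}_0$, $\tilde{D}_3$). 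Finally, reading off from \autoref{pointlocationsP6} the coordinates of the endpoints $P_i = \tilde{D}_{k(i)} \cap C_i^0$ and $Q_i = \tilde{D}_{k(i)} \cap C_i^1$ (which depend on $a_0,\dots,a_4$ and $t$), I would evaluate $\tilde{\chi}(\tilde{\alpha}_i) = \int_{P_i}^{Q_i} \operatorname{res}_{\tilde{D}_{k(i)}} \tilde{\omega}$; in each case this reduces to an elementary integral of a $1$-form such as $\tilde{k}\,dg$ or $-(\tilde{k}/t)\,du_3$ between explicit endpoints, giving $\tilde{k}\,a_i$.

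Part (b) is then immediate. Setting $\tilde{k} = 1$, part (a) reads $\tilde{\chi}(\tilde{\alpha}_i) = a_i$, so by definition the parameters $a_i$ appearing in the point configuration are the root variables attached to the symmetry root basis of \autoref{symmetryrootsP6}. For the normalisation, I would record the linear-algebra identity $\tilde{\alpha}_0 + \tilde{\alpha}_1 + 2\tilde{\alpha}_2 + \tilde{\alpha}_3 + \tilde{\alpha}_4 = 2\h_f + 2\h_g - \E_1 - \cdots - \E_8 = -\mathcal{K}_{\tilde{\X}_t}$, which follows at once from the expressions in \autoref{symmetryrootsP6}, so that linearity of $\tilde{\chi}$ gives $a_0 + a_1 + 2a_2 + a_3 + a_4 = \tilde{\chi}(-\mathcal{K}_{\tilde{\X}_t})$; since $\tilde{\chi}(-\mathcal{K}_{\tilde{\X}_t}) = 1$ (established exactly as in the $\pain{IV}$ case treated earlier), this is precisely \eqref{paramnormP6}.

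The main obstacle is bookkeeping in part (a) rather than any conceptual difficulty. One must choose the representatives $C_i^0, C_i^1$ so that each is genuinely irreducible, use the correct coordinate chart when the relevant anticanonical component sits inside a blowup (the cases $\tilde{D}_0$, $\tilde{D}_3$), and keep careful track of the orientation convention in the Poincar\'e residue and of the order of the limits of integration so that the signs come out as stated — in particular the same residue convention used for the $\pain{IV}$ surfaces must be followed consistently. These are exactly the ``standard computations'' referred to in the statement, and are carried out in detail for these surfaces in \cite{DFS19}.
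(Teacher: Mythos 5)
Your proposal is correct and follows essentially the same route as the paper, which simply invokes the ``standard computations'' of the period-mapping algorithm recalled in \autoref{sec332} (and carried out in detail in \cite{DFS19}): your decompositions $\tilde{\alpha}_i = [C_i^1]-[C_i^0]$, the pairing of each root with the anticanonical components $\tilde{D}_0,\tilde{D}_1,\tilde{D}_3,\tilde{D}_4$, and the residue integrals in the $(f,g)$, $(F,g)$, $(u_3,v_3)$ and $(u_7,v_7)$ charts all check out and yield $\tilde{\chi}(\tilde{\alpha}_i)=\tilde{k}a_i$. Part (b) then follows, as you say, from linearity and the identity $\tilde{\alpha}_0+\tilde{\alpha}_1+2\tilde{\alpha}_2+\tilde{\alpha}_3+\tilde{\alpha}_4=-\mathcal{K}_{\tilde{\X}_t}$ together with the normalisation \eqref{paramnormP6}, exactly as in the $\Pain{IV}$ case.
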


\end{appendices}


\bibliographystyle{amsalpha}

\begin{thebibliography}{SKKT00}

\bibitem[BVA18]{BVA18}
L. Boelen and W. Van~Assche,
  \emph{Discrete Painlev\'e equations for recurrence coefficients of semiclassical Laguerre polynomials}, Proc. Amer. Math. Soc. 138 (2010), no. 4, 1317--1331.


\bibitem[CDT17]{CDT17}
A. S. Carstea, A. Dzhamay and T. Takenawa,
  \emph{Fiber-dependent deautonomization of integrable 2{D} mappings and
  discrete {P}ainlev\'{e} equations}, J. Phys. A 50 (2017), no.~40,
  405202, 41 pp. 

\bibitem[Chi16]{chiba}
H. Chiba, 
\emph{The first, second and fourth Painlev\'e equations on weighted projective spaces},
J. Differential Equations 260 (2016), no. 2, 1263--1313.

\bibitem[Chi78]{Chi:1978:AITOP}
T.~S. Chihara, \emph{An Introduction to Orthogonal Polynomials}, Mathematics and its
  Applications, Vol. 13, Gordon and
  Breach Science Publishers, New York-London-Paris, 1978. 

\bibitem[Cla19]{Cla19}
P. A. Clarkson,
  \emph{Open problems for Painlev\'e equations}, SIGMA Symmetry
  Integrability Geom. Methods Appl. 15 (2019), no.~006, 20 pp.

\bibitem[CJ14]{CJ14}
P. A. Clarkson and K. Jordaan,
  \emph{The relationship between semiclassical Laguerre polynomials and the fourth {P}ainlev\'e equation}, Constr. Approx. 39 (2014), no.~1, 223--254.

\bibitem[DFLS]{DFLS} A. Dzhamay, G. Filipuk, A. Lig\c{e}za, and A. Stokes, 
\emph{Different Hamiltonians for the Painlev\'e ${\text{P}_{\mathrm{IV}}}$ equation and their identification using a geometric approach}, 
\href{https://arxiv.org/abs/2109.06428}{https://arxiv.org/abs/2109.06428 [nlin.SI]}, 2021,
  pp.~1--26.

\bibitem[DFS20]{DFS19} A. Dzhamay, G. Filipuk and A. Stokes, 
\emph{Recurrence coefficients for discrete orthogonal polynomials with hypergeometric 
weight and discrete Painlev\'e equations}, 
J. Phys. A 53 (2020), no. 49, 495201, 29pp.

\bibitem[DK19]{DK19}
A. Dzhamay and A. Knizel, \emph{q-{R}acah polynomial ensemble and discrete
  q-{P}ainlev\'e equations}, Int. Math. Res. Not. IMRN (2019),
  no.~doi:10.1093/imrn/rnz211, 1--47.

\bibitem[DST13]{DST13}
A. Dzhamay, H. Sakai and T. Takenawa, \emph{Discrete
  {S}chlesinger transformations, their {H}amiltonian formulation, and
  {D}ifference {P}ainlev\'e equations},
  \href{http://arxiv.org/abs/1302.2972}{arXiv:1302.2972v2 [math-ph]}, 2013,
  pp.~1--29.

\bibitem[DT18]{DT18}
A. Dzhamay and T. Takenawa, \emph{On some applications of {S}akai's
  geometric theory of discrete {P}ainlev\'e equations}, SIGMA Symmetry
  Integrability Geom. Methods Appl. 14 (2018), no.~075, 20 pp.

\bibitem[FVA18]{FVA18}
G. Filipuk and W. Van~Assche, \emph{Discrete orthogonal polynomials
  with hypergeometric weights and {P}ainlev\'{e} {VI}}, SIGMA Symmetry
  Integrability Geom. Methods Appl. 14 (2018), Paper No. 088, 19.

\bibitem[FVAZ12]{FVAZ12}
G. Filipuk, W. Van~Assche and L. Zhang, \emph{The recurrence coefficients of semi-classical Laguerre polynomials and the fourth Painlev\'e equation}, J. Phys. A 45 (2012), no. 20, 205201, 13pp.
 
\bibitem[GLS]{GLS} V.I. Gromak, I. Laine and S. Shimomura,  \emph{Painlev\'e Differential Equations in the Complex Plane}, De Gruyter Studies in Mathematics, 28, Walter de Gruyter \& Co., Berlin, 2002.

\bibitem[HC17]{HC17} P. Han and Y. Chen, \emph{The recurrence coefficients of semi-classical Laguerre polynomials and the large $n$ asymptotics of the associated Hankel determinant}, Random Matrices Theory Appl. 6 (2017), no. 4, 1740002, 20 pp.


\bibitem[HFC20]{HFC19}
J.~Hu, G.~Filipuk and Y.~Chen, \emph{Differential and difference equations for
  recurrence coefficients of orthogonal polynomials with hypergeometric weights
  and {B}\"acklund transformations of the sixth {P}ainlev\'e equation},
  Random Matrices: Theory Appl. (2020), 2150029, 17pp.

\bibitem[Ism05]{Ism:2005:CAQOPIOV}
M. E.~H. Ismail, \emph{Classical and Quantum Orthogonal Polynomials in One
  Variable}, Encyclopedia of Mathematics and its Applications, Vol.~98,
  Cambridge University Press, Cambridge, 2005, With two chapters by W. Van
  Assche. With a foreword by R. A. Askey.  

\bibitem[IKSY]{IKSY} K. Iwasaki, H.  Kimura, S. Shimomura  and M. Yoshida, \emph{From Gauss to Painlev\'e. A modern theory of special functions}, Aspects of Mathematics, E16, Friedr. Vieweg \& Sohn, Braunschweig, 1991.

\bibitem[IO16]{orbifold} K. Iwasaki and S. Okada, 
\emph{On an orbifold Hamiltonian structure for the first Painlev\'e equation}, 
J. Math. Soc. Japan 68 (2016), no. 3, 961--974.


\bibitem[JD11]{JD11} J. J. Duistermaat and N. Joshi,   \emph{Okamoto's space for the first Painlev\'e equation in Boutroux coordinates}, Arch. Ration. Mech. Anal. 202 (2011), no. 3, 707--785. 




\bibitem[KNY17]{KajNouYam:2017:GAOPE}
K. Kajiwara, M. Noumi and Y. Yamada, \emph{Geometric aspects
  of {P}ainlev\'e equations}, J. Phys. A 50 (2017), no.~7, 073001,
  164 pp.  

\bibitem[Kac90]{KAC}
V. G. Kac, \emph{Infinite-dimensional Lie algebras}, 3rd ed., Cambridge University Press, 1990.

 
\bibitem[MMT99]{T2} T. Matano, A. Matumiya and K.  Takano, \emph{On some Hamiltonian structures of Painlev\'e  systems. II},  J. Math. Soc. Japan 51 (1999), no. 4, 843--866. 

\bibitem[M97]{M}  A. Matumiya, \emph{On some Hamiltonian structures of Painlev\'e systems III}, Kumamoto J. Math.  10 (1997), 45--73.

\bibitem[No]{No} M. Noumi,   \emph{Painlev\'e Equations Through Symmetry},  Translations of Mathematical Monographs, Vol. 223, American Mathematical Society, Providence, RI, 2004.

\bibitem[NY98]{NouYam:1998:AWGDDSAPE}
M. Noumi and Y. Yamada, \emph{Affine {W}eyl groups, discrete
  dynamical systems and {P}ainlev\'e equations}, Comm. Math. Phys. 199 
  (1998), no.~2, 281--295. 

\bibitem[OKSO]{OKSO} Y. Ohyama, H. Kawamuko, H. Sakai and K. Okamoto,  \emph{Studies on the Painlev\'e equations. V. Third Painlev\'e equations of special type $P_{\rm III}(D_7)$ and $P_{\rm III}(D_8)$}, J. Math. Sci. Univ. Tokyo 13 (2006), no. 2, 145--204. 

\bibitem[Ok79]{Ok79} K. Okamoto,    \emph{Sur les feuilletages associ\'es aux \'equations du second ordre \`a points critiques fixes de P. Painlev\'e} (French) [On foliations associated with second-order Painlev\'e equations with fixed critical points], Japan. J. Math. (N.S.) 5 (1979), no. 1, 1--79.  

\bibitem[OkI]{OkI} K. Okamoto, \emph{Polynomial Hamiltonians associated with Painlev\'e equations, I}, Proc. Japan Acad. Ser. A Math. Sci. 56 (1980), 264--268.

\bibitem[OkII]{OkII} K. Okamoto, \emph{Polynomial Hamiltonians associated with Painlev\'e equations, II}, Proc. Japan Acad. Ser. A Math. Sci.   56 (1980), 367--371.

\bibitem[Ok1]{Ok1} K. Okamoto,   \emph{Studies on the Painlev\'e equations. III. Second and fourth Painlev\'e equations, $P_{{\rm II}}$ and $P_{{\rm IV}}$}, Math. Ann. 275 (1986), no. 2, 221--255.  

\bibitem[Ok2]{Ok2} K. Okamoto,   \emph{Studies on the Painlev\'e equations. IV. Third Painlev\'e equation $P_{{\rm III}}$}, Funkcial. Ekvac. 30 (1987), no. 2-3, 305--332. 
 
\bibitem[Ok3]{Ok3} K. Okamoto,   \emph{Studies on the Painlev\'e equations. I. Sixth Painlev\'e equation $P_{{\rm VI}}$}, Ann. Mat. Pura Appl. (4) 146 (1987), 337--381. 

\bibitem[Ok4]{Ok4} K. Okamoto,    \emph{Studies on the Painlev\'e equations. II. Fifth Painlev\'e equation $P_{\rm V}$},  Japan. J. Math. (N.S.) 13 (1987), no. 1, 47--76.

\bibitem[ST02]{ST02} M.-H. Saito and T. Takebe, \emph{Classification of Okamoto-Painlev\'e pairs}, Kobe J. Math. 19 (2002), no. 1-2, 21--50.  

\bibitem[Sak01]{Sak:2001:RSAWARSGPE}
H. Sakai, \emph{Rational surfaces associated with affine root systems and
  geometry of the {P}ainlev\'e equations}, Comm. Math. Phys. 220 
  (2001), no.~1, 165--229. 

\bibitem[Sak13]{Sak13} H. Sakai, \emph{Ordinary differential equations on rational elliptic surfaces}, in: Symmetries, Integrable Systems and Representations, 515--541, Springer Proc. Math. Stat., 40, Springer, Heidelberg, 2013.


\bibitem[ST97]{T1} T. Shioda and  K. Takano, \emph{On some Hamiltonian structures of Painlev\'e systems. I}, Funkcial. Ekvac. 40 (1997), no. 2, 271--291.

\bibitem[Sze67]{Sze:1967:OP}
G. Szeg\H{o}, \emph{Orthogonal Polynomials}, third ed., American Mathematical Society
  Colloquium Publications,  Vol. 23, American
  Mathematical Society, Providence, RI, 1967.  

\bibitem[Tak01a]{Takenawa2001a}
T. Takenawa, \emph{Algebraic entropy and the space of initial values for discrete dynamical systems},
J. Phys. A 34 (2001), no. 48, 10533--10545. Symmetries and integrability of difference equations (Tokyo, 2000).

\bibitem[Tak01b]{Takenawa2001b} 
T. Takenawa, \emph{A geometric approach to singularity confinement and algebraic entropy},
J. Phys. A 34 (2001), no. 10, L95--L102.


\bibitem[T07]{T07} H. Terajima, \emph{Families of Okamoto-Painlev\'e pairs and Painlev\'e equations},  Ann. Mat. Pura Appl. (4) 186 (2007), no. 1, 99--146. 

\bibitem[TW93]{TracyWidomA}
C. A. Tracy and H. Widom, \emph{Level-spacing distribution and the Airy kernel}, 
Phys. Letts B 305 (1993), 115--118.

\bibitem[TW94]{TracyWidomB}
C. A. Tracy and H. Widom, \emph{Level-spacing distribution and the Airy kernel}, 
Commun. Math. Phys. 159 (1994), 151--174.

\bibitem[TOS05]{TOS05} T. Tsuda, K. Okamoto and H. Sakai, \emph{Folding transformation of the Painlev\'e equations},  Math.
Ann. 331 (2005), 713--738.

\bibitem[VA18]{Van:2018:OPAPE}
W. Van~Assche, \emph{Orthogonal Polynomials and {P}ainlev\'{e} Equations},
  Australian Mathematical Society Lecture Series, Vol.~27, Cambridge University
  Press, Cambridge, 2018.  

\end{thebibliography}

\end{document}